\newcommand{\N}{\mathbb N} 
\newcommand{\R}{\mathbb R} 
\newcommand{\Rn}{\R^n} 
\newcommand{\CV}{\operatorname{Conv}(\Rn)}
\newcommand{\CVc}{\operatorname{Conv_{c}}(\Rn)} 
\newcommand{\CVcf}{\operatorname{Conv_{c}}(\Rn,\R)} 
\newcommand{\CVs}{\operatorname{Conv_{sc}}(\Rn)} 
\newcommand{\CVsf}{\operatorname{Conv_{sc}}(\Rn,\R)} 
\newcommand{\CVoin}{\operatorname{Conv_{(o)}}(\Rn)} 
\newcommand{\K}{{\mathcal K}}
\newcommand{\Kn}{\K^n} 
\newcommand{\Koin}{\Kn_{(o)}} 
\newcommand{\Q}{{\mathcal Q}}
\newcommand{\mx}{\mathbin{\vee}} 
\newcommand{\mn}{\mathbin{\wedge}} 
\newcommand{\Ind}{\mathrm{I}}
\renewcommand{\l}{\ell}
\renewcommand{\d}{\,\mathrm{d}}
\newcommand{\sumf}{\mathrm{sf}}
\newcommand\SLn{\operatorname{SL}(n)}
\newcommand{\dom}{\operatorname{dom}}
\newcommand{\interior}{\operatorname{int}}
\newcommand{\argmax}{\operatorname{argmax}}
\newcommand{\elim}{\operatorname{epi-lim}}
\newcommand{\oZ}{\operatorname{Z}}
\newcommand{\eto}{\stackrel{epi}{\longrightarrow}}
\newtheorem{lemma}{Lemma}[section]
\newtheorem{theorem}[lemma]{Theorem}
\newtheorem*{theorem*}{Theorem}
\newtheorem*{corollary*}{Corollary}
\theoremstyle{definition}
\theoremstyle{remark}
\newtheorem{remark}[lemma]{Remark}
\newtheorem*{remark*}{Remark}
\title{$\SLn$ Invariant Valuations on Super-Coercive Convex Functions}
\author{Fabian Mussnig}
\date{}
\begin{document}

\maketitle

\begin{abstract}
All non-negative, continuous, $\SLn$ and translation invariant valuations on the space of super-coercive, convex functions on $\Rn$ are classified. Furthermore, using the invariance of the function space under the Legendre transform, a classification of non-negative, continuous, $\SLn$ and dually translation invariant valuations is obtained. In both cases, different functional analogs of the Euler characteristic, volume and polar volume are characterized.
\bigskip

{\noindent
{\bf 2010 AMS subject classification:} 26B25 (46A40, 52A20, 52A41, 52B45).\\
{\bf Keywords:} valuations, convex functions, super-coercive, Legendre transform, $\SLn$ invariance, polar volume.
}
\end{abstract}

\section{Introduction and Main Results}
\noindent
At the Paris ICM in 1900, David Hilbert asked the following question: 
Given two polytopes with equal volume, can one of them be cut into finitely many pieces that can be used to yield the other? It was already known that this is possible in the 2-dimensional case but the higher dimensional cases were still open. In the same year, Max Dehn was able to construct two polytopes that have the same volume but cannot be cut and reassembled to yield each other. Thus the answer to Hilbert's question is no for dimensions greater or equal than 3. In his proof, Dehn used the so-called Dehn invariant and made substantial use of its valuation property. To be more precise, let $\Kn$ denote the space of convex bodies, i.e.\! compact, convex sets, in $\Rn$. A map $\mu:\Q^n \subseteq \Kn\to\R$ is called a \emph{valuation} whenever
$$\mu(K)+\mu(L)=\mu(K\cup L) + \mu(K\cap L)$$
for every $K,L\in\Q^n$ such that also $K\cup L, K\cap L \in\Q^n$. Since Dehn's proof, valuations have been studied extensively in convex and discrete geometry and a first classification result was established by Blaschke in the 1930s \cite{blaschke}. He proved that linear combinations of the Euler characteristic and the $n$-dimensional volume are the only continuous, $\SLn$ and translation invariant valuations on $\Kn$. Here, continuity is understood with respect to the Hausdorff metric.

Important generalizations of Blaschke's result have been obtained since then \cite{ludwig_reitzner_annals,ludwig_reitzner,ludwig_val_poly_origin_int,Alesker99}. Recently, Haberl and Parapatits generalized Blaschke's result to valuations defined on $\Koin$, the set of convex bodies in $\Rn$ that contain the origin in their interiors. Note, that by restricting to a smaller space, it is possible that more valuations appear in a classification result. In this case, not only the $n$-dimensional volume, $V_n$, and the Euler characteristic, $V_0$, were characterized, but also the \emph{polar volume} $V_n^*(K):=V_n(K^*)$. Here $K^*=\{x\in\Rn\,:\, x\cdot y \leq 1, \forall y\in K\}$ is the \emph{polar body} of $K\in\Koin$.

\begin{theorem}[\!\!\cite{haberl_parapatits_centro}]
\label{thm:haberl_parapatits}
For $n\geq 2$, a map $\mu:\Koin\to\R$ is a continuous and $\SLn$ invariant valuation if and only if there exist constants $c_0,c_1,c_2\in\R$ such that
\begin{equation}
\label{eq:haberl_parapatits}
\mu(K)=c_0 V_0(K)+c_1 V_n(K)+c_2 V_n^*(K)
\end{equation}
for every $K\in\Koin$.
\end{theorem}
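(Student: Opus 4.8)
I would prove Theorem~\ref{thm:haberl_parapatits} by treating the two implications separately, the direction ``if'' being the routine one. There it suffices to verify that each of $V_0$, $V_n$, $V_n^*$ is a continuous, $\SLn$ invariant valuation on $\Koin$, so that every linear combination as in \eqref{eq:haberl_parapatits} is as well. For $V_0\equiv 1$ this is trivial and for $V_n$ classical; for $V_n^*$ the $\SLn$ invariance follows from $(AK)^*=(A^{-1})^{\mathsf{T}}K^*$ together with $|\det (A^{-1})^{\mathsf{T}}|=1$ for $A\in\SLn$, the continuity from the continuity of the polarity map on $\Koin$ in the Hausdorff metric combined with the continuity of $V_n$, and the valuation property from the identities $(K\cup L)^*=K^*\cap L^*$ and $(K\cap L)^*=\conv(K^*\cup L^*)$ for $K,L,K\cup L\in\Koin$, the observation that $K^*\cup L^*$ is itself convex whenever $K\cup L$ is (so that $\conv(K^*\cup L^*)$ and $K^*\cup L^*$ have the same volume), and the inclusion--exclusion principle for Lebesgue measure applied to $K^*$ and $L^*$.

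For the direction ``only if'', let $\mu\colon\Koin\to\R$ be a continuous, $\SLn$ invariant valuation; I want to produce $c_0,c_1,c_2$ realising \eqref{eq:haberl_parapatits}. Since polytopes containing the origin in their interiors are dense in $\Koin$ in the Hausdorff metric and $\mu$ is continuous, it is enough to identify $\mu$ on such polytopes, and then continuity transports the formula to all of $\Koin$. The point to keep in mind is that no proper face of a polytope $P$ with $0$ in its interior can contain the origin, so the origin plays a purely ``internal'' role and the face-by-face localisation underlying Blaschke's theorem is not directly available; moreover one cannot dissect $P$ into pieces all lying in $\Koin$, since such pieces would all contain the origin in their interiors and hence overlap. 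All applications of the valuation property must therefore be arranged so that the four bodies $Q$, $Q'$, $Q\cup Q'$, $Q\cap Q'$ all stay in $\Koin$; a basic such move is the thin-slab splitting $P=(P\cap\{x : x\cdot u\le c\})\cup(P\cap\{x : x\cdot u\ge -c\})$ for small $c>0$, whose intersection is the slab $P\cap\{x : |x\cdot u|\le c\}$ of $P$ about a hyperplane through the origin.

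Iterating such splittings is what allows one to separate $\mu$ into a ``regular'' part and a ``singular'' part. The regular part is the one for which the slab contributions stabilise as $c\to0$; comparing slabs in different directions and using $\SLn$ invariance together with Blaschke's theorem (applied inside hyperplanes, whose stabilisers in $\SLn$ act as full groups of volume-preserving affine maps) pins it down as $c_0V_0+c_1V_n$. The singular part, where the slab contributions diverge as $c\to0$, must be a multiple of $V_n^*$: by $\SLn$ invariance it is governed by a continuous symmetric function on the moduli space of simplices containing the origin in their interiors, parametrised by the barycentric coordinates of the origin, and the valuation property forces a Cauchy-type functional equation on this function; continuity, the behaviour under the dilations $x\mapsto tx$ ($t>0$) --- which lie outside $\SLn$ and separate the homogeneity degrees $0$, $n$, $-n$ of $V_0$, $V_n$, $V_n^*$ --- and the polarity symmetry $\mu\mapsto\mu^*$, $\mu^*(K):=\mu(K^*)$, which (by the lemma used in the ``if'' direction) preserves the class of such valuations while fixing $V_0$ and interchanging $V_n$ and $V_n^*$, then cut the solution space down to exactly $\operatorname{span}\{V_0,V_n,V_n^*\}$. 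Finally one checks that the values of $\mu$ on all of $\Poin$ are determined, via the valuation relations, by this function, which completes the identification $\mu=c_0V_0+c_1V_n+c_2V_n^*$.

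The step I expect to be the main obstacle is exactly this last analysis, and the reason is the interaction with the boundary of $\Koin$. Every natural device for reducing the dimension or localising --- slicing by a hyperplane through the origin, fan-triangulating from the origin, restricting to a lower-dimensional subspace through the origin --- leaves $\Koin$, where $\mu$ is undefined and where $V_n^*$ genuinely blows up, so the whole argument has to be run using only operations that keep all bodies in $\Koin$, while simultaneously tracking the divergence of $\mu$ near $\partial\Koin$ (which is precisely the footprint of the polar volume). Organising the valuation relations so that they stay in the domain, isolating the singular part cleanly, and extracting from it a functional equation whose solution space is no larger than three-dimensional is where the real work lies; the hypothesis $n\ge2$ enters here, since for $n=1$ the group $\operatorname{SL}(1)$ is trivial and the singular part is not pinned down.
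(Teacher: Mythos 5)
The paper offers no proof of Theorem~\ref{thm:haberl_parapatits}: it is imported verbatim from \cite{haberl_parapatits_centro}, where its proof (building on \cite{ludwig_val_poly_origin_int,ludwig_reitzner_annals}) occupies a long and delicate paper. So the only question is whether your blind sketch is itself a proof, and it is not. Your ``if'' direction is essentially correct: $V_0$ and $V_n$ are classical, and for $V_n^*$ the identities $(K\cup L)^*=K^*\cap L^*$ and $(K\cap L)^*=K^*\cup L^*$ (valid when $K,L,K\cup L\in\Koin$, since then $h_{K\cup L}=h_K\vee h_L$ and $h_{K\cap L}=h_K\wedge h_L$) give the valuation property, and $\SLn$ invariance and continuity follow as you say.

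The ``only if'' direction, however, is a road map with the hard content assumed. Concretely: (i) the splitting of $\mu$ into a ``regular'' and a ``singular'' part via thin slabs is never constructed --- you give no argument that the slab contributions converge as $c\to 0$, that the divergent remainder is again a valuation on $\Koin$, or that the two pieces are separately continuous and $\SLn$ invariant; (ii) the appeal to the dilations $x\mapsto tx$ to ``separate the homogeneity degrees $0,n,-n$'' is circular: $\mu$ is only assumed $\SLn$ invariant, nothing is assumed about its behaviour under dilations, and without translation invariance there is no McMullen-type homogeneous decomposition to fall back on, so homogeneity can only be exploited \emph{after} the representation \eqref{eq:haberl_parapatits} is known; (iii) Blaschke's theorem requires translation invariance, and the stabiliser in $\SLn$ of a hyperplane through the origin acts on that hyperplane linearly, not by all volume-preserving affine maps (hyperplanes off the origin do carry such an affine action, but slices by them leave $\Koin$), so the claimed identification of the regular part as $c_0V_0+c_1V_n$ is not justified; (iv) the decisive step --- that the singular part is a multiple of $V_n^*$, equivalently the classification on $\Poin$ together with a continuity argument transferring it to $\Koin$ --- is precisely the core of \cite{haberl_parapatits_centro}, obtained there through a careful analysis of functional equations arising from triangulations of simplices with the origin in the interior; your one-sentence appeal to ``a Cauchy-type functional equation'' plus dilations plus the polarity symmetry $\mu\mapsto\mu^*$ asserts the conclusion rather than proving it (and the polarity symmetry can only cut down a solution space once that space is already known to be finite-dimensional). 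In the context of the present paper the correct move is simply to cite \cite{haberl_parapatits_centro}; as a proof, your proposal has genuine gaps at exactly the points where the theorem is difficult.
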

Here, a valuation $\mu:\Koin\to\R$ is said to be \emph{$\SLn$ invariant} if $\mu(\phi K)=\mu(K)$ for every $\phi\in\SLn$ and $K\in\Koin$.

In recent years, the notion of valuation was extended to functions spaces. Let $\mathcal{S}$ be a space of (extended) real-valued functions on $\Rn$. We say that a map $\oZ:\mathcal{S}\to\R$ is a \emph{valuation} whenever
$$\oZ(u)+\oZ(v)=\oZ(u\vee v) + \oZ(u \wedge v)$$
for every $u,v\in\mathcal{S}$ such that also $u\vee v, u \wedge v\in\mathcal{S}$. Here, $u\vee v$ and $u\wedge v$ denote the pointwise maximum and minimum of the functions $u,v\in\mathcal{S}$, respectively. In particular, valuations on Sobolev spaces \cite{ludwig_fisher,ludwig_sobolev,ma}, $L^p$ spaces \cite{ludwig_covariance,ober_minkowski,tsang_val_on_lp,tsang_mink_val_on_lp}, on definable functions \cite{baryshnikov_ghrist_wright} and on quasi-concave functions \cite{bobkov_colesanti_fragala,colesanti_fragala,colesanti_lombardi,colesanti_lombardi_parapatits,milman_rotem} were studied and characterized. See also \cite{Alesker_Convex,colesanti_ludwig_mussnig_mink,colesanti_ludwig_mussnig_hess,villanueva,tradacete_villanueva_adv,tradacete_villanueva_imrn,wang_semi_val,ludwig_function}.

For convex functions, an analog of Blaschke's characterization of continuous, $\SLn$ and translation invariant valuations was established in \cite{colesanti_ludwig_mussnig}. More recently, this result was improved and a functional analog of Theorem~\ref{thm:haberl_parapatits} was found. Thereby, functional versions of the Euler characteristic and volume together with a new analog of the polar volume were characterized. In order to state this result, let $\CVcf$ denote the space of all convex, coercive functions $u:\Rn\to\R$. Here, a function $u$ is said to be \emph{coercive} if
$$\lim_{|x|\to\infty} u(x)=+\infty.$$
We equip $\CVcf$ with the topology associated to pointwise convergence (see also Section \ref{se:convex_functions}). A map $\oZ:\mathcal{S}\to\R$ is called \emph{translation invariant} if $\oZ(u\circ \tau^{-1})=\oZ(u)$ for every $u\in\mathcal{S}$ and translation $\tau$ on $\Rn$. Furthermore, $\oZ$ is said to be $\SLn$ invariant if $\oZ(u\circ \phi^{-1})=\oZ(u)$ for every $u\in\mathcal{S}$ and $\phi\in\SLn$.

\begin{theorem}[\!\!\cite{mussnig_advances}]
\label{thm:class_advances}
For $n\geq 2$, a map $\oZ:\CVcf\to[0,\infty)$ is a continuous, $\SLn$ and translation invariant valuation if and only if there exist continuous functions $\zeta_0,\zeta_1,\zeta_2:\R\to[0,\infty)$ where $\zeta_1$ has finite moment of order $n-1$ and $\zeta_2(t)=0$ for all $t\geq T$ with some $T\in\R$ such that
\begin{equation}
\label{eq:class_advances}
\oZ(u)=\zeta_0\big(\min\nolimits_{x\in\Rn} u(x)\big) + \int_{\Rn} \zeta_1\big(u(x)\big) \d x + \int_{\dom u^*} \zeta_2\big(\nabla u^*(x)\cdot x - u^*(x)\big) \d x
\end{equation}
for every $u\in\CVcf$.
\end{theorem}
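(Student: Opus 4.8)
The plan is to verify sufficiency directly and to obtain necessity by first restricting $\oZ$ to a rich one‑parameter family of convex functions on which Theorem~\ref{thm:haberl_parapatits} applies, and then propagating the resulting formula to all of $\CVcf$ via a dilation‑homogeneous decomposition together with the Legendre transform.

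\textbf{Sufficiency.} Each summand in \eqref{eq:class_advances} is a valuation: for the first this follows from $\min(u\mx v)=\max(\min u,\min v)$, $\min(u\mn v)=\min(\min u,\min v)$ and the identity $\zeta_0(\max(a,b))+\zeta_0(\min(a,b))=\zeta_0(a)+\zeta_0(b)$; for the second from the sublevel set identities $\{u\mx v\le t\}=\{u\le t\}\cap\{v\le t\}$, $\{u\mn v\le t\}=\{u\le t\}\cup\{v\le t\}$ combined with the layer‑cake formula; for the third from the fact that the Legendre transform interchanges $\mx$ and $\mn$ on the pairs admissible in the valuation identity, so that the domains $\dom(\,\cdot\,)^*$ and the integrands decompose accordingly. $\SLn$ invariance uses $(u\circ\phi^{-1})^*=u^*\circ\phi^T$, $\phi^T\in\SLn$, and $\det\phi=1$; translation invariance of the third summand uses that translating $u$ changes $u^*$ only by adding an affine function $y\mapsto a\cdot y$, which leaves $\nabla u^*(x)\cdot x-u^*(x)$ unchanged. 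Continuity and finiteness follow from dominated convergence, with the finite $(n-1)$-st moment of $\zeta_1$ and the eventual vanishing of $\zeta_2$ supplying the majorants — recalling that pointwise convergence of convex functions in $\CVcf$ is automatically locally uniform and that $u\mapsto\min u$ is then continuous.

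\textbf{Necessity: extracting the densities.} Given $\oZ$, restrict it to the shifted gauges $u_{K,t}(x):=\|x\|_K+t$ with $K\in\Koin$ and $t\in\R$. Since $\max(\|\cdot\|_K,\|\cdot\|_L)=\|\cdot\|_{K\cap L}$ and, whenever $K\cup L$ is convex, $\min(\|\cdot\|_K,\|\cdot\|_L)=\|\cdot\|_{K\cup L}$, the map $K\mapsto\oZ(u_{K,t})$ is, for each fixed $t$, a continuous and $\SLn$ invariant valuation on $\Koin$; Theorem~\ref{thm:haberl_parapatits} therefore yields continuous $c_0,c_1,c_2\colon\R\to[0,\infty)$ with $\oZ(u_{K,t})=c_0(t)+c_1(t)V_n(K)+c_2(t)V_n^*(K)$. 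Computing the three candidate functionals on $u_{K,t}$ (using $\{u_{K,t}\le s\}=(s-t)K$ and that $u_{K,t}^*$ equals the constant $-t$ on $\interior K^*=\interior\dom u_{K,t}^*$) identifies $\zeta_0=c_0$, $\zeta_2=c_2$ and forces $c_1(t)=n\int_t^\infty\zeta_1(s)(s-t)^{n-1}\d s$; this last relation is an Abel‑type transform which, once the requisite regularity of $c_1$ is established by testing $\oZ$ on further families with prescribed sublevel sets, recovers a unique continuous $\zeta_1\ge0$. Non‑negativity of $\oZ$ gives $\zeta_i\ge0$; finiteness of $\oZ$ on the functions $x\mapsto|x|+t$ forces the finite $(n-1)$-st moment of $\zeta_1$; and finiteness on suitably chosen super‑coercive functions forces $\zeta_2$ to vanish on a half‑line $[T,\infty)$.

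\textbf{Necessity: from shifted gauges to all of $\CVcf$.} To see that the formula just obtained holds on all of $\CVcf$, I would use the $\R_{>0}$-action $(\lambda\cdot u)(x):=u(x/\lambda)$, which commutes with the $\SLn$-action and under which the three candidate functionals are homogeneous of degrees $0$, $n$ and $-n$. A homogeneous decomposition theorem for valuations on convex functions then splits $\oZ=\oZ_0+\oZ_n+\oZ_{-n}$ into components of pure degree (that only the degrees $0,n,-n$ survive reflects that on convex bodies no $\SLn$ invariant homogeneity degree strictly between $0$ and $n$, and none between $-n$ and $0$, occurs). The degree‑$0$ component is shown to depend on $u$ only through $\min u$; the degree‑$n$ component, passed through sublevel sets, becomes a translation‑ and $\SLn$-invariant valuation on convex bodies, homogeneous of degree $n$, hence a multiple of $V_n$ at each level, reassembling to $\int_{\Rn}\zeta_1(u)\d x$; the degree‑$(-n)$ component is transported by the Legendre transform to a degree‑$n$, $\SLn$- and dually translation invariant valuation on super‑coercive convex functions and is identified likewise, with the polar volume replaced by $\int_{\dom u^*}\zeta_2(\nabla u^*(x)\cdot x-u^*(x))\d x$. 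Matching the densities with those found above and invoking continuity together with the density of piecewise‑affine functions in $\CVcf$ finishes the argument.

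\textbf{Main obstacle.} The substantial work lies in this last step: establishing the homogeneous decomposition and the restriction on the degrees in this non‑compact, infinite‑dimensional setting, and carrying out the function‑to‑body reductions for $\oZ_n$ and $\oZ_{-n}$ while keeping the integrability and continuity estimates under control. The Legendre‑transform summand is the most delicate point: one must work over a non‑compact domain of integration, pin down the exact behaviour of $(u\mx v)^*$ and $(u\mn v)^*$, and guarantee stability under the limiting procedures used in the density argument.
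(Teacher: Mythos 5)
You should first note that this paper never proves Theorem~\ref{thm:class_advances}: it is imported verbatim from \cite{mussnig_advances} and used as a black box, so your attempt has to be measured against the proof given there. Your sufficiency sketch and your opening necessity step are in the right spirit: restricting $\oZ$ to $u_{K,t}=\|\cdot\|_K+t$, invoking Theorem~\ref{thm:haberl_parapatits} for each fixed $t$, and computing that $c_0(t)=\zeta_0(t)$, $c_2(t)=\zeta_2(t)$ and $c_1(t)=n\int_t^\infty\zeta_1(s)(s-t)^{n-1}\d s$ is indeed how one gets the candidate densities. The gap is the propagation step, and it is not a technical loose end but the missing core of the proof.

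The ``homogeneous decomposition theorem for valuations on convex functions'' that your argument pivots on is not an available tool. McMullen's decomposition for translation invariant valuations on $\Kn$ rests on the polynomiality of $\lambda\mapsto\mu(K+\lambda L)$ under Minkowski addition; under the dilation action $u\mapsto u(\cdot/\lambda)$ on $\CVcf$ there is no analogous polynomial structure, and no splitting of a continuous, $\SLn$ and translation invariant valuation into dilation-homogeneous components (let alone into degrees $0$, $n$, $-n$ only) is known --- proving such a statement would be essentially equivalent to the theorem itself, because knowledge of $\oZ$ on the family $\|\cdot\|_K+t$ does not determine it on $\CVcf$: that family is far from dense, and maxima/minima within it do not leave it, so the valuation identity must be exploited on much richer classes (piecewise affine and cone-type functions, cell decompositions, delicate approximation), which is exactly the technical bulk of \cite{mussnig_advances}; there the decomposition into the three terms is an output of that analysis, not an input. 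Even granting the splitting, your identification of the degree-$n$ and degree-$(-n)$ parts ``through sublevel sets'' is a Klain-type reassembly that is asserted rather than argued, and it is precisely where the new polar-volume-type term makes matters delicate. Smaller but real gaps in the same direction: non-negativity of $\zeta_0$ does not follow from the scaling argument you indicate (the $\lambda^{-n}$ term prevents isolating $c_0$); inverting the Abel-type transform requires the $n$-fold differentiability of $c_1$, which you defer to unspecified ``further families''; and continuity of the $\zeta_2$-functional under epi-convergence, with the varying domains $\dom u^*$, is not a routine dominated-convergence estimate (compare Lemma~\ref{le:polar_vol_is_a_val}, which is itself a nontrivial result of \cite{mussnig_advances}).
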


Here, a function $\zeta:\R\to[0,\infty)$ has \emph{finite moment of order $n-1$} if $\int_0^{\infty} t^{n-1} \zeta(t) \d t < +\infty$. Note that for functions $u\in\CVcf$ the minimum is attained and hence finite. For a convex function $u$ on $\Rn$
$$u^*(x)=\sup\nolimits_{y\in\Rn} \big(x\cdot y - u(y)\big),\qquad x\in\Rn$$
denotes the \emph{Legendre transform} or \emph{convex conjugate} of $u$, where $x\cdot y$ denotes the inner product of $x,y\in\Rn$. Moreover, $\dom u^*=\{x\in\Rn\,:\, u^*(x) <+\infty\}$ denotes the \emph{domain of $u^*$}, which is needed since $u^*$ might attain the value $+\infty$. Lastly, $\nabla u^*$ denotes the \emph{gradient} of $u^*$. Note, that it follows from Rademacher's theorem (see for example \cite[Theorem 3.1.6]{federer}) that the convex function $u^*$ is differentiable a.e.\! on the interior of its domain.

\begin{remark*}
Observe, that \eqref{eq:haberl_parapatits} can be retrieved from \eqref{eq:class_advances} if $u$ is chosen to be $\|\cdot\|_K$, the norm with unit ball $K\in\Koin$.
\end{remark*}

We will show that the statement of Theorem~\ref{thm:class_advances} is still true on the space
$$\CVsf:=\{u:\Rn\to\R\,:\, u \text{ is convex and super-coercive}\}$$
where we say that a function $u$, defined on $\Rn$, is \emph{super-coercive} if
$$\lim_{|x|\to\infty} \frac{u(x)}{|x|}=+\infty.$$
It is a priori not clear that no new valuations appear on $\CVsf$. Note, that the proof of Theorem~\ref{thm:class_advances} made extensive use of functions that are coercive but not super-coercive. Furthermore, to the best of the author's knowledge, there does not seem to be an easy way to generalize the proof to the setting of super-coercive functions.

Moreover, we want to point out that the space $\Koin$ is invariant under the polarity transform, that is $\{K^*\,:\, K\in\Koin\}=\Koin$. Results of Artstein-Avidan and Milman \cite{artstein_milman_annals} show that the Legendre transform is the only natural, functional analog of the polarity transform on most spaces of convex functions. In contrast to the space $\CVcf$, the space $\CVsf$ is invariant under the Legendre transform, that is,
\begin{eqnarray*}
\{u^*\,:\, u\in\CVcf\} &\neq& \CVcf\\
\{u^*\,:\, u\in\CVsf\} &=& \CVsf.
\end{eqnarray*}
In that sense, $\CVsf$ seems to be a better functional analog of the space $\Koin$. For further details see Section~\ref{se:convex_functions}.

\begin{theorem}
\label{thm:main_result}
For $n\geq 2$, a map $\oZ:\CVsf\to[0,\infty)$ is a continuous, $\SLn$ and translation invariant valuation if and only if there exist continuous functions $\zeta_0,\zeta_1,\zeta_2:\R\to[0,\infty)$ where $\zeta_1$ has finite moment of order $n-1$ and $\zeta_2(t)=0$ for all $t\geq T$ with some $T\in\R$ such that
\begin{equation}
\label{eq:transl_inv_case}
\oZ(u)=\zeta_0\big(\min\nolimits_{x\in\Rn} u(x)\big) + \int_{\Rn} \zeta_1\big(u(x)\big) \d x + \int_{\Rn} \zeta_2\big(\nabla u^*(x)\cdot x - u^*(x)\big) \d x
\end{equation}
for every $u\in\CVsf$.
\end{theorem}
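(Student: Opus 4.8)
My plan is to prove the sufficiency direction by a direct check and to obtain necessity by \emph{extending} the valuation from $\CVsf$ to the larger space $\CVcf$ and then invoking Theorem~\ref{thm:class_advances}.

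\emph{Sufficiency.} For admissible $\zeta_0,\zeta_1,\zeta_2$ one checks that each of the three summands on the right-hand side of \eqref{eq:transl_inv_case} is a well-defined, non-negative, continuous, $\SLn$ and translation invariant valuation on $\CVsf$. The only place where super-coercivity enters is finiteness: the sublevel sets $\{u\le t\}$ are bounded, so the finite moment of order $n-1$ of $\zeta_1$ makes $\int_{\Rn}\zeta_1(u)\d x$ converge as in \cite{mussnig_advances}; moreover, since $u\in\CVsf$ forces $u^*\in\CVsf$ and hence $|\nabla u^*(x)|\to\infty$ as $|x|\to\infty$, while $\nabla u^*(x)\cdot x-u^*(x)=u(\nabla u^*(x))$ at points of differentiability and $\zeta_2$ vanishes on $[T,\infty)$, the integrand of the third term is supported on the bounded set $\{x\in\Rn:\nabla u^*(x)\in\{u<T\}\}$, so that integral is finite as well. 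Note that $\dom u^*=\Rn$ for every $u\in\CVsf$, which is precisely why \eqref{eq:transl_inv_case} integrates over $\Rn$ rather than over $\dom u^*$ as in \eqref{eq:class_advances}. Invariance is immediate; the valuation property of the first two terms is elementary, using $\min(u\vee v)+\min(u\wedge v)=\min u+\min v$ and $\zeta_1(u\vee v)+\zeta_1(u\wedge v)=\zeta_1(u)+\zeta_1(v)$ pointwise, and that of the third term is obtained exactly as in \cite{mussnig_advances}; continuity of all three terms is likewise established as in the coercive case.

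\emph{Necessity.} Let $\oZ\colon\CVsf\to[0,\infty)$ be a continuous, $\SLn$ and translation invariant valuation. Fix a positive definite quadratic form $q$ with $\det q=1$, and for $u\in\CVcf$ and $k\in\N$ put $u_k:=u\vee(q-k)$. Since $u_k\ge q-k$, each $u_k$ is convex and super-coercive, hence $u_k\in\CVsf$, and $u_k\downarrow u$ pointwise, i.e.\ $u_k\to u$ in $\CVcf$. Define $\widetilde\oZ(u):=\lim_{k\to\infty}\oZ(u_k)$ for $u\in\CVcf$. Granting for the moment that this limit exists and that $\widetilde\oZ$ is continuous on $\CVcf$, one argues as follows. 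If $u\in\CVsf$ then $u_k\to u$ inside $\CVsf$, so $\widetilde\oZ(u)=\oZ(u)$ by continuity of $\oZ$; thus $\widetilde\oZ$ extends $\oZ$, and since each $u\in\CVcf$ is the pointwise limit of $(u_k)\subseteq\CVsf$, the subspace $\CVsf$ is dense in $\CVcf$. For $u,v\in\CVcf$ with $u\wedge v$ convex (equivalently, with $u\vee v,u\wedge v\in\CVcf$), the distributive lattice identities give $u_k\vee v_k=(u\vee v)\vee(q-k)$ and $u_k\wedge v_k=(q-k)\vee(u\wedge v)$, so all of $u_k,v_k,u_k\vee v_k,u_k\wedge v_k$ lie in $\CVsf$; applying the valuation equation for $\oZ$ to the pair $u_k,v_k$ and letting $k\to\infty$ yields the valuation equation for $\widetilde\oZ$, and non-negativity is clear. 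Running the construction with $q$ replaced by $q\circ\phi$ for $\phi\in\SLn$, or by $q$ composed with a translation, gives a further continuous valuation on $\CVcf$ that again extends $\oZ$ on the dense subset $\CVsf$, hence coincides with $\widetilde\oZ$; together with the $\SLn$ and translation invariance of $\oZ$, this forces $\widetilde\oZ$ to be $\SLn$ and translation invariant. Now Theorem~\ref{thm:class_advances} applies to $\widetilde\oZ$ and yields continuous $\zeta_0,\zeta_1,\zeta_2\colon\R\to[0,\infty)$, with $\zeta_1$ of finite moment of order $n-1$ and $\zeta_2$ vanishing on $[T,\infty)$, so that $\widetilde\oZ$ is given by \eqref{eq:class_advances}. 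Restricting to $u\in\CVsf$, where $\dom u^*=\Rn$, formula \eqref{eq:class_advances} becomes \eqref{eq:transl_inv_case}, and the proof is complete.

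\emph{Main obstacle.} The crux is to justify the two facts granted above: that $\lim_{k\to\infty}\oZ(u\vee(q-k))$ exists for every $u\in\CVcf$, and that the resulting functional $\widetilde\oZ$ is continuous on $\CVcf$. Neither is automatic, because $\oZ$ is not assumed monotone, so the pointwise-decreasing sequence $u\vee(q-k)$ need not produce a monotone, let alone convergent, sequence of values, and its limit leaves $\CVsf$, so continuity of $\oZ$ there cannot be applied directly. I would establish both facts by controlling, along the sequence $u\vee(q-k)$ and under perturbations of $u$ inside $\CVcf$, the three geometric objects $\min u$, the sublevel sets $\{u\le t\}$, and the conjugate gradient map $\nabla u^*$ — in effect re-deriving, at the level of these objects, the stability estimates that already underlie the continuity part of \cite{mussnig_advances} — and then passing to the limit by dominated convergence. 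A lesser technical point, harmless here, is that $\CVsf$ is not closed under pointwise minima, so the valuation equation for $\oZ$ may only be used for pairs whose pointwise minimum is again convex; the identity $u_k\wedge v_k=(q-k)\vee(u\wedge v)$ stays inside exactly this class.
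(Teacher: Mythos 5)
Your sufficiency argument is fine, and your reduction skeleton for necessity would indeed work \emph{if} the extension $\widetilde\oZ$ existed as a continuous valuation on $\CVcf$; but that existence is precisely the gap, and it is the entire difficulty of the theorem, not a technical afterthought. Two things are missing and neither is obtainable by the means you sketch. First, for an abstract $\oZ$ there is no reason for $\lim_{k\to\infty}\oZ\bigl(u\vee(q-k)\bigr)$ to exist: $\oZ$ is not monotone, and continuity of $\oZ$ on $\CVsf$ gives no information along this sequence because its epi-limit $u$ lies outside $\CVsf$. Second, even granting pointwise existence, $\widetilde\oZ$ is then only a pointwise limit of functionals on $\CVcf$, and you need it to be continuous (with respect to epi-convergence) before Theorem~\ref{thm:class_advances} applies — a pointwise limit of continuous functionals need not be continuous, and you offer no uniformity. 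Your proposed remedy, ``controlling $\min u$, the sublevel sets and $\nabla u^*$ and passing to the limit by dominated convergence,'' is circular: those stability estimates in \cite{mussnig_advances} concern the three explicit integral functionals, i.e.\ they presuppose exactly the representation of $\oZ$ you are trying to establish; for an abstract, possibly non-monotone valuation there is no integrand to dominate. Note also that your invariance argument (identifying the extensions built from $q$, $q\circ\phi$, $q\circ\tau$ via density of $\CVsf$ in $\CVcf$) again needs continuity of these extensions, so it rests on the same unproven facts; at each finite $k$ the functional $u\mapsto\oZ\bigl(u\vee(q-k)\bigr)$ is a continuous valuation on $\CVcf$ but is \emph{not} $\SLn$ or translation invariant, so Theorem~\ref{thm:class_advances} cannot be applied before the problematic limit.

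For comparison, the paper goes in the opposite direction and avoids any extension of $\oZ$: it composes with the strictly increasing, convex, piecewise linear functions $g_k$, so that $u\mapsto\oZ\bigl(g_k(u)\bigr)$ is, for each fixed $k$, a genuinely continuous, $\SLn$ and translation invariant valuation on $\CVcf$ (composition with $g_k$ commutes with $\vee$, $\wedge$, translations, $\SLn$, and preserves epi-convergence, unlike your $u\vee(q-k)$, which destroys the invariances at each finite stage). Theorem~\ref{thm:class_advances} is applied to each of these pullbacks, and the real work (Lemmas~\ref{le:growth_function_sequences}--\ref{le:lim_zeta_2}) consists in controlling the resulting growth function sequences $\zeta_i^k$ uniformly in $k$ — in particular producing a single moment bound for $\zeta_1$ and a single threshold $T$ for all $\zeta_2^k$ via specially constructed super-coercive test functions — before letting $k\to\infty$ and using $g_k(u)\eto u$. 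If you want to salvage your approach you would have to prove an extension/limit lemma of comparable depth; as written, the central step is asserted rather than proved.
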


By using the invariance of $\CVsf$ under the Legendre transform we also obtain the following equivalent result. Let $\mathcal{S}$ be a space of super-coercive, convex functions on $\Rn$. A map $\oZ:\mathcal{S}\to\R$ is said to be \emph{dually translation invariant} if $\oZ(u+l)=\oZ(u)$ for every $u\in\mathcal{S}$ and every linear functional $l$ on $\Rn$. Equivalently, $\oZ$ is dually translation invariant if and only if $u\mapsto \oZ(u^*)$ is translation invariant for every $u$ such that $u^*\in\mathcal{S}$.

\renewcommand\thelemma{1.3*}
\begin{theorem}
\label{cor:main_result}
For $n\geq 2$, a map $\oZ:\CVsf\to[0,\infty)$ is a continuous, $\SLn$ and dually translation invariant valuation if and only if there exist continuous functions $\zeta_0,\zeta_1,\zeta_2:\R\to[0,\infty)$ where $\zeta_1$ has finite moment of order $n-1$ and $\zeta_2(t)=0$ for all $t\geq T$ with some $T\in\R$ such that
\begin{equation}
\label{eq:dually_transl_inv_case}
\oZ(u) = \zeta_0\big(u(0)\big) + \int_{\Rn} \zeta_1\big( u^*(x) \big)\d x + \int_{\Rn} \zeta_2\big(\nabla u(x)\cdot x - u(x) \big)\d x
\end{equation}
for every $u\in\CVsf$.
\end{theorem}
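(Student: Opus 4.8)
The plan is to deduce Theorem~\ref{cor:main_result} directly from Theorem~\ref{thm:main_result} by exploiting the self-duality of $\CVsf$ under the Legendre transform. First I would establish that the Legendre transform $u\mapsto u^*$ is a bijection of $\CVsf$ onto itself (this is asserted in the excerpt and will be proved in Section~\ref{se:convex_functions}), that it is an involution, and — crucially — that it is continuous with respect to the topology of pointwise convergence on $\CVsf$. The last point is the technical heart of the reduction: one needs that $u_k\to u$ pointwise on $\Rn$ with all functions in $\CVsf$ implies $u_k^*\to u^*$ pointwise; this should follow from the fact that pointwise convergence of finite convex functions on $\Rn$ is equivalent to epi-convergence (together with locally uniform convergence on the interior of the limiting domain), and epi-convergence is preserved under the Legendre transform by a classical result of Wijsman/Rockafellar (see \cite{rockafellar_wets}). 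I would state this as a lemma in the preliminary section and quote it here.

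Given this, the argument is a short formal manipulation. Suppose $\oZ:\CVsf\to[0,\infty)$ is a continuous, $\SLn$ and dually translation invariant valuation, and define $\widetilde{\oZ}(u):=\oZ(u^*)$ for $u\in\CVsf$. Then $\widetilde{\oZ}$ is again non-negative and, by continuity and bijectivity of the Legendre transform, continuous. It is a valuation because the Legendre transform interchanges pointwise maxima and minima with the ``infimal convolution/sum'' operations in the appropriate way — more precisely, one checks $(u\mx v)^* = u^*\mn v^*$ and $(u\mn v)^*=u^*\mx v^*$ on the relevant functions, so the valuation identity for $\oZ$ on the pair $u^*,v^*$ transfers to $\widetilde{\oZ}$ on $u,v$. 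For $\SLn$ invariance one uses $(u\circ\phi^{-1})^* = u^*\circ\phi^{T}$ and that $\phi\mapsto\phi^T$ is a bijection of $\SLn$. Finally, dual translation invariance of $\oZ$ says precisely that $u\mapsto\oZ(u^*)$ is translation invariant, i.e.\ $\widetilde{\oZ}$ is translation invariant. Hence $\widetilde{\oZ}$ satisfies all hypotheses of Theorem~\ref{thm:main_result}, so there are $\zeta_0,\zeta_1,\zeta_2$ of the stated type with
\begin{equation*}
\widetilde{\oZ}(w)=\zeta_0\big(\min\nolimits_{x\in\Rn} w(x)\big) + \int_{\Rn} \zeta_1\big(w(x)\big) \d x + \int_{\Rn} \zeta_2\big(\nabla w^*(x)\cdot x - w^*(x)\big) \d x
\end{equation*}
for every $w\in\CVsf$.

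It remains to substitute $w=u^*$ and rewrite each term intrinsically in $u$. Since $w^*=u^{**}=u$, the first term becomes $\zeta_0(\min_{x} u^*(x))$; but $\min_{x\in\Rn} u^*(x) = -\sup_{x}(-u^*(x)) = -\sup_x(0\cdot x - u^*(x)) = -u^{**}(0) = -u(0)$, so after absorbing the sign into $\zeta_0$ (replacing $\zeta_0(t)$ by $\zeta_0(-t)$, still continuous and non-negative) we get $\zeta_0(u(0))$. The second term becomes $\int_{\Rn}\zeta_1(u^*(x))\d x$ directly. In the third term $w^*=u$, so it becomes $\int_{\Rn}\zeta_2(\nabla u(x)\cdot x - u(x))\d x$. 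This yields exactly \eqref{eq:dually_transl_inv_case}, with the same conditions on $\zeta_1$ (finite moment of order $n-1$) and $\zeta_2$ (vanishing on $[T,\infty)$). The converse direction is immediate by running the same substitution backwards, or equivalently by verifying directly that each of the three functionals in \eqref{eq:dually_transl_inv_case} is a continuous, $\SLn$ and dually translation invariant valuation, which follows from the forward direction applied to $\oZ(u^*)$.

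I expect the only genuine obstacle to be the continuity of the Legendre transform on $\CVsf$ in the pointwise topology, and in particular making sure the topology used in Theorem~\ref{thm:main_result} is exactly the one transported correctly under duality; once the equivalence of pointwise convergence and epi-convergence on this space is in place (to be recorded in Section~\ref{se:convex_functions}), everything else is bookkeeping with standard identities for the Legendre transform. A minor point worth stating carefully is the integrability/finiteness of each term after substitution — e.g.\ that $\int_{\Rn}\zeta_1(u^*(x))\d x<\infty$ for $u\in\CVsf$ — but this is automatic because the corresponding term for $\widetilde{\oZ}$ was finite by Theorem~\ref{thm:main_result}, and similarly the a.e.\ differentiability of $u$ needed in the third integrand follows from Rademacher's theorem exactly as in the translation invariant case.
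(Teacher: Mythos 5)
Your proposal is correct and follows essentially the same route as the paper: transfer the problem through the Legendre transform via $\widetilde{\oZ}(u)=\oZ(u^*)$, using Wijsman's theorem for continuity, the identities $(u\mx v)^*=u^*\mn v^*$, $(u\mn v)^*=u^*\mx v^*$ for the valuation property, and the invariance of $\CVsf$ under conjugation, then apply Theorem~\ref{thm:main_result} and rewrite $\min_x u^*(x)=-u(0)$, absorbing the sign into $\zeta_0$. The only cosmetic difference is that the paper handles the ``if'' direction by quoting Lemmas~\ref{le:min_is_a_val_polar}, \ref{le:int_is_a_val_polar} and \ref{le:polar_vol_is_a_val_polar} rather than re-deriving it from the dual statement, which is equivalent to what you propose.
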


\begin{remark*}
The \emph{volume product} $V_n(K) V_n(K^*)$ is of significant interest in convex geometric analysis. In particular,
\begin{equation}
\label{eq:blaschke_santalo_bourgain_milman}
c^n V_n(B^n)^2 \leq V_n(K)V_n(K^*)\leq V_n(B^n)^2
\end{equation}
for every origin symmetric $K\in\Koin$, i.e. $K=-K$, where $B^n$ denotes the Euclidean unit ball and $c>0$ is an absolute constant. The right side of \eqref{eq:blaschke_santalo_bourgain_milman} is sharp with the maximizers being ellipsoids \cite{petty} and is also known as the Blaschke-Santal\'o inequality \cite{blaschke_affine_geometrie_VII,santalo}. The left side is due to Bourgain and Milman \cite{bourgain_milman} but the optimal constant $c$ is still not known. The famous Mahler conjecture states that the volume product is minimized for affine transforms of cubes (among others) and a proof for the two-dimensional case is due to Mahler \cite{mahler}. More recently, the conjecture was confirmed for the three-dimensional case \cite{iriyeh_shibata}, but the general case remains open.

Functional versions of \eqref{eq:blaschke_santalo_bourgain_milman} for log-concave functions were obtained in \cite{artstein_klartag_milman, artstein_slomka, ball_thesis, klartag_milman}. In particular, it was shown that
$$\left( \frac{2 \pi}{c}\right)^n \leq \int_{\Rn} \exp\big(-u(x)\big) \d x \int_{\Rn} \exp\big(-u^*(x)\big) \d x \leq (2 \pi)^n$$
for suitable convex functions $u$ on $\Rn$, where $c>0$ is again an absolute constant. Considering Theorem~\ref{thm:main_result} and Theorem~\ref{cor:main_result}, the question arises if a similar inequality can be obtained using the quantities
$$u \mapsto \int_{\dom u^*} \zeta\big(\nabla u^*(x)\cdot x - u^*(x)\big) \d x\quad \text{and/or} \quad u \mapsto \int_{\dom u} \zeta\big(\nabla u(x)\cdot x - u(x)\big)\d x$$
for suitable functions $\zeta:\R\to\R$ and convex functions $u$ on $\Rn$.
\end{remark*}

\section{Convex Functions}
\label{se:convex_functions}
\renewcommand\thelemma{\arabic{section}.\arabic{lemma}}
We will work in $n$-dimensional Euclidean space, $\Rn$. Let $\CV$ denote the space of all convex, proper, lower semicontinuous functions $u:\Rn\to(-\infty,\infty]$, where we call a function $u$ on $\Rn$ \emph{proper} if $u\not\equiv +\infty$. We will consider the following subsets of $\CV$:
\begin{eqnarray*}
\CVc &=& \{u\in\CV \,:\,u \text{ is coercive}\}\\
\CVs &=& \{u\in\CV \,:\, u \text{ is super-coercive}\}\\
\CVoin &=& \{u\in\CV \,:\, 0 \in \interior \dom u\}
\end{eqnarray*}
where $\interior A$ denotes the \emph{interior} of the set $A\subseteq \Rn$. Furthermore, $\CVcf$ and $\CVsf$ will denote the sets of functions in $\CVc$ and $\CVs$, respectively, that only take values in $\R$, i.e.\! functions that do not attain the value $+\infty$.

For $u\in\CV$ and $t\in\R$ we will write
$$\{u\leq t\}=\{x\in\Rn\,:\, u(x)\leq t\}$$
for the \emph{sublevel sets} of $u$. Since $u$ is convex and lower semicontinuous, the sets $\{u\leq t\}$ are convex and closed. Moreover, since $u$ is proper, there exists $t\in\R$ such $\{u\leq t\}\neq \emptyset$. If in addition $u$ is coercive, then all sublevel sets of $u$ are bounded. In particular $\{u\leq t\}\in\Kn$ for all $t\geq \min_{x\in\Rn} u(x)$.

For $K\in\Kn$ we will denote by
$$\Ind_K(x) = \begin{cases}
0,\quad & x\in K\\
+\infty, \quad & x \notin K
\end{cases} $$
the \emph{(convex) indicator function of $K$}. Observe, that $\{\Ind_K\leq t\} = K$ for all $t\geq 0$ and $\Ind_K\in\CVs$ for all $K\in\Kn$.

The space $\CV$ and its subspaces will be equipped with the topology due to epi-convergence. Here, we say that a sequence $u_k\in\CV$, $k\in\N$ is \emph{epi-convergent} to $u\in\CV$ if the following two conditions hold for all $x\in\Rn$:
\begin{enumerate}
	\item[(i)] For every sequence $x_k$ that converges to $x$,
	$$u(x)\leq \liminf\nolimits_{k\to\infty} u_k(x_k).$$
	\item[(ii)] There exists a sequence $x_k$ that converges to $x$ such that
	$$u(x)=\lim\nolimits_{k\to\infty} u_k(x_k).$$
\end{enumerate}
If a sequence $u_k$ is epi-convergent to $u$, we will write $u=\elim_{k\to\infty} u_k$ and $u_k \eto u$.

For elements in $\CV$, epi-convergence coincides with local uniform convergence a.e. The only exceptions occur at the boundary of the domain of the limit function.

\begin{theorem}[\!\!\cite{rockafellar_wets}, Theorem 7.17]
\label{thm:epi_conv_is_nice}
For any epi-convergent sequence of convex functions $u_k:\Rn\to(-\infty,\infty]$ the limit function $u=\elim_{k\to\infty} u_k$ is convex. Moreover, under the assumption the $u:\Rn\to(-\infty,\infty]$ is convex and lower semicontinuous such that $\dom u$ has nonempty interior, the following are equivalent:
\begin{enumerate}
	\item[(a)] $u=\elim_{k\to\infty} u_k.$
	\item[(b)] $u_k(x)\to u(x)$ for all $x\in D$, where $D$ is a dense subset of $\Rn$.
	\item[(c)] $u_k$ converges uniformly to $u$ on every compact set $C\subset \Rn$ that does not contain a boundary point of $\dom u$.
\end{enumerate}
\end{theorem}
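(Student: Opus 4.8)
The plan is to translate the functional conditions (i) and (ii) into Painlev\'e--Kuratowski convergence of epigraphs, to isolate the classical compactness behaviour of convex functions as a lemma, and then to analyse the interior, the boundary, and the exterior of $\dom u$ separately. For the first assertion: conditions (i) and (ii) say precisely that $\epi u_k\to\epi u$ in $\Rn\times\R$ in the Painlev\'e--Kuratowski sense, with (i) equivalent to $\limsup_k\epi u_k\subseteq\epi u$ and (ii) to $\epi u\subseteq\liminf_k\epi u_k$. Since each $u_k$ is convex, each $\epi u_k$ is convex; given $(x,\alpha),(y,\beta)\in\epi u$ and $\lambda\in[0,1]$, the inner-limit inclusion supplies $(x_k,\alpha_k),(y_k,\beta_k)\in\epi u_k$ converging to $(x,\alpha)$ and $(y,\beta)$, convexity of $\epi u_k$ gives $\lambda(x_k,\alpha_k)+(1-\lambda)(y_k,\beta_k)\in\epi u_k$, and the outer-limit inclusion puts the limit point $\lambda(x,\alpha)+(1-\lambda)(y,\beta)$ in $\epi u$; hence $\epi u$ is convex and $u$ is convex. (Moreover $\epi u=\limsup_k\epi u_k$ is closed, so $u$ is lower semicontinuous, which is in any case assumed in the second part.)

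The analytic input I would record as a separate lemma is the classical fact that \emph{if $v_k$ are convex and finite on an open convex set $U\subseteq\Rn$ and $v_k\to v$ pointwise on a dense subset of $U$ with $v$ finite on $U$, then $v_k\to v$ uniformly on every compact $C\subseteq U$}. The proof is routine: values at the vertices of a small simplex about a point, together with convexity, bound $v_k$ from above on a closed ball in $U$; a reflection estimate bounds them from below; convexity with a two-sided bound yields a common Lipschitz constant on a slightly smaller ball; covering $C$ by finitely many such balls makes $\{v_k\}$ equi-Lipschitz and uniformly bounded near $C$, and Arzel\`a--Ascoli forces uniform convergence on $C$, necessarily to $v$.

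For (a)$\Rightarrow$(c), assume $u=\elim_k u_k$. First I claim $u_k\to u$ pointwise on $\interior\dom u$: for $x\in\interior\dom u$, condition (i) with the constant sequence gives $u(x)\le\liminf_k u_k(x)$; for the reverse, pick a simplex $\conv\{v_0,\dots,v_n\}\subseteq\interior\dom u$ with $x$ in its interior, take recovery sequences $v_i^k\to v_i$ with $u_k(v_i^k)\to u(v_i)$ from (ii), write $x=\sum_i\mu_i^k v_i^k$ in barycentric coordinates, and use convexity $u_k(x)\le\sum_i\mu_i^k u_k(v_i^k)$; letting $k\to\infty$ and then shrinking the simplex (so $u(v_i)\to u(x)$, by continuity of $u$ on $\interior\dom u$) gives $\limsup_k u_k(x)\le u(x)$. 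Now the lemma yields uniform convergence on compact subsets of $\interior\dom u$. Next, if $C\subseteq\Rn\setminus\cl\dom u$ is compact and $u_k\not\to+\infty$ uniformly on $C$, pick $M$, $k_j\to\infty$, $x_j\in C$ with $u_{k_j}(x_j)\le M$ and a subsequential limit $x_j\to\bar x\in C$; inserting the constant value $\bar x$ at the missing indices gives a sequence converging to $\bar x$ along which $\liminf u_k\le M$, so (i) forces $u(\bar x)\le M<\infty$, contradicting $\bar x\notin\cl\dom u$. Finally, for compact $C$ with $C\cap\partial\dom u=\emptyset$, the disjoint open sets $\interior\dom u$ and $\Rn\setminus\cl\dom u$ cover $C$, so $C$ is the disjoint union of the two compact sets $C\cap\interior\dom u$ and $C\cap(\Rn\setminus\cl\dom u)$, and (c) follows from the two cases just treated.

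Since $\dom u$ is convex with nonempty interior, $\partial\dom u$ is nowhere dense, so $D:=\Rn\setminus\partial\dom u$ is dense, and (c) immediately gives $u_k(x)\to u(x)$ for every $x\in D$; this settles (c)$\Rightarrow$(b). For (b)$\Rightarrow$(a), assume $u_k(x)\to u(x)$ for all $x$ in a dense set $D$. Applying the lemma on $\interior\dom u$ (where $D$ is dense) gives $u_k\to u$ uniformly on compact subsets of $\interior\dom u$; it remains to check (i) and (ii). For (ii): at $x\in\interior\dom u$ take $x_k=x$; at $x\notin\cl\dom u$ choose $d_j\in D\setminus\cl\dom u$ with $d_j\to x$, so $u_k(d_j)\to u(d_j)=+\infty$, and diagonalize; at $x\in\partial\dom u$ fix $y\in\interior\dom u$, set $x^{(j)}=(1-\tfrac1j)x+\tfrac1j y\in\interior\dom u$, observe that $u(x^{(j)})\to u(x)$ because $t\mapsto u((1-t)x+ty)$ is convex and finite on $(0,1]$ and lower semicontinuous at $t=0$ (this also covers $u(x)=+\infty$), and diagonalize against $u_k(x^{(j)})\to u(x^{(j)})$. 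For (i): let $x_k\to x$ and suppose $\liminf_k u_k(x_k)=L<u(x)$, passing to a subsequence along which $u_k(x_k)\to L$. If $x\in\interior\dom u$, uniform convergence near $x$ already gives $u_k(x_k)\to u(x)$, a contradiction. If $x\in\partial\dom u$, fix $y\in\interior\dom u$ and, for $\lambda\in(0,1)$, use convexity $u_k((1-\lambda)x_k+\lambda y)\le(1-\lambda)u_k(x_k)+\lambda u_k(y)$; since $(1-\lambda)x_k+\lambda y\to(1-\lambda)x+\lambda y\in\interior\dom u$, passing to the limit yields $u((1-\lambda)x+\lambda y)\le(1-\lambda)L+\lambda u(y)$, and letting $\lambda\to0^+$ (again by segment continuity of $u$) gives $u(x)\le L$, a contradiction. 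The remaining case $x\notin\cl\dom u$ is the main obstacle, since convexity only ever produces upper bounds on $u_k(x_k)$: I would separate $x$ from $\cl\dom u$ by an open half-space $H$ on which $u\equiv+\infty$, fix $a\in\interior\dom u$ (so $u_k$ is uniformly bounded on a fixed ball $\bar B(a,r)\subseteq\interior\dom u$), and combine $u_k(x_k)\le M$ (for large $k$) with convexity to bound $u_k$ uniformly on $\conv(\bar B(a,r)\cup\{x_k\})$; for $k$ large this set contains a fixed small ball about a point of $H$ lying deep enough inside $H$, hence contains a point $d\in D\cap H$, and the uniform bound then contradicts $u_k(d)\to u(d)=+\infty$. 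This proves (i) and (ii), hence (a), and closes the cycle (a)$\Rightarrow$(c)$\Rightarrow$(b)$\Rightarrow$(a); the hypothesis that $\dom u$ has nonempty interior and the exclusion of boundary points of $\dom u$ in (c) are precisely what make these interior and exterior arguments available, and absorb the possible discontinuity of convex functions on the boundary of their domains.
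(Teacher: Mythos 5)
Your argument is essentially correct, but there is nothing in the paper to compare it with: the paper does not prove this statement, it imports it verbatim from Rockafellar and Wets (\emph{Variational Analysis}, Theorem 7.17). What you have produced is a self-contained version of the standard textbook argument: conditions (i) and (ii) are the two epigraph inclusions of Painlev\'e--Kuratowski convergence, which immediately give convexity (and closedness) of $\epi u$; the classical equi-boundedness/equi-Lipschitz lemma for convex functions converging on a dense set does the work on $\interior \dom u$; and the three regions $\interior\dom u$, $\partial\dom u$, $\Rn\setminus\cl\dom u$ are treated separately. The delicate points are handled correctly: the segment-continuity of $u$ along $(1-t)x+ty$ (valid also when $u(x)=+\infty$, by lower semicontinuity plus convexity) at boundary points, and the separating half-space argument for $x\notin\cl\dom u$ in (b)$\Rightarrow$(a), which is indeed the only place where one needs a lower bound that convexity alone cannot provide; bounding $u_k$ on $\conv\big(\bar B(a,r)\cup\{x_k\}\big)$ and hitting a dense point inside the half-space where $u\equiv+\infty$ is a correct way to close it.

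One imprecision should be fixed. Your auxiliary lemma assumes the $v_k$ are finite on all of the open set $U$, but in both applications with $U=\interior\dom u$ the functions $u_k$ may take the value $+\infty$ inside $\interior\dom u$ for every fixed $k$ (e.g. $u_k=\Ind_{\{|x|\leq 1-1/k\}}$ epi-converging to $\Ind_{\{|x|\leq 1\}}$), so the lemma as stated is not literally applicable. This does not break the proof: the simplex upper bound in your own sketch, applied at dense points where $u_k(d)\to u(d)\in\R$, shows that for each compact $C\subset U$ the $u_k$ are \emph{eventually} finite and uniformly bounded above on a neighborhood of $C$; the reflection bound then gives the lower bound, and the equi-Lipschitz estimate and Arzel\`a--Ascoli apply as you describe, with the limit identified as $u$ because $u$ is convex, hence continuous, on $\interior\dom u$. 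So restate the lemma for $(-\infty,+\infty]$-valued convex functions, with finiteness near compact sets obtained as a conclusion for large $k$ rather than assumed; with that adjustment your cycle (a)$\Rightarrow$(c)$\Rightarrow$(b)$\Rightarrow$(a) is complete and matches the argument one finds in the cited source.
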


\begin{remark}
It is a consequence of Theorem~\ref{thm:epi_conv_is_nice} that epi-convergence coincides with pointwise convergence on $\CVcf$ and $\CVsf$. See also \cite[Example 5.13]{dal_maso}.
\end{remark}

For functions in $\CVc$, epi-convergence also corresponds to Hausdorff convergence of sublevel sets. In the following we say that $\{u_k\leq t \} \to \emptyset$ as $k\to\infty$ if there exists $k_0\in\N$ such that $\{u_k\leq t\}=\emptyset$ for all $k\geq k_0$.

\begin{lemma}[\!\!\cite{colesanti_ludwig_mussnig}, Lemma 5 and \cite{beer_rockafellar_wets}, Theorem 3.1]
\label{le:hd_conv_lvl_sets}
Let $u_k,u\in\CVc$. If $u_k\eto u$, then $\{u_k\leq t\} \to \{u\leq t\}$ as $k\to+\infty$ for every $t\in\R$ with $t\neq \min_{x\in\Rn} u(x)$. Furthermore, if for every $t\in\R$ there exists a sequence $t_k\to t$ such that $\{u_k\leq t_k\}\to\{u\leq t\}$, then $u_k\eto u$.
\end{lemma}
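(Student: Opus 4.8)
The statement I am asked to prove is a characterization of epi-convergence in $\CVc$ via Hausdorff convergence of sublevel sets, and it is explicitly attributed in the excerpt to \cite{colesanti_ludwig_mussnig}, Lemma~5, together with \cite{beer_rockafellar_wets}, Theorem~3.1. Accordingly, the natural plan is not to reprove everything from scratch but to assemble the statement from the known general correspondence between epi-convergence of functions and set-convergence of epigraphs, specialized to the coercive convex setting where epigraphs are ``nice'' (their sublevel sets are compact once above the minimum).

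\medskip

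\emph{Approach.} First I would recall the basic fact (Rockafellar--Wets, Attouch--Wets, or Beer--Rockafellar--Wets) that for a sequence $u_k$ in $\CV$, epi-convergence $u_k \eto u$ is equivalent to Painlev\'e--Kuratowski set-convergence of the epigraphs $\epi u_k \to \epi u$ in $\R^{n+1}$. Then, for the first (necessity) direction, I would fix $t \in \R$ with $t \neq \min_{x} u(x)$ and split into two cases. If $t < \min_x u(x)$, then $\{u \leq t\} = \emptyset$, and using epi-convergence condition (i) one shows that for $k$ large $\{u_k \leq t\} = \emptyset$ as well: if not, one could extract points $x_k \in \{u_k \leq t_k\}$ with $t_k \to t$, and by coercivity plus a boundedness argument (all the relevant sublevel sets sit inside a fixed compact set, since the limit's sublevel set at a slightly larger level is compact) pass to a convergent subsequence $x_k \to x$, whence $u(x) \leq \liminf u_k(x_k) \leq t < \min u$, a contradiction. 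If instead $t > \min_x u(x)$, then $\{u \leq t\}$ is a convex body with $t$ in the interior of the range of $u$; here I would prove the two halves of Hausdorff/Kuratowski convergence separately. ``$\liminf$'' inclusion: given $x$ with $u(x) \leq t$, in fact one can find $x'$ arbitrarily close with $u(x') < t$ (using $t > \min u$ and convexity/continuity of $u$ on the interior of its domain); applying condition (ii) at $x'$ yields $x_k \to x'$ with $u_k(x_k) \to u(x') < t$, so $x_k \in \{u_k \leq t\}$ eventually, giving $x' \in \liminf \{u_k \leq t\}$, and then let $x' \to x$. ``$\limsup$'' inclusion: if $x_{k_j} \in \{u_{k_j} \leq t\}$ with $x_{k_j} \to x$, condition (i) gives $u(x) \leq \liminf u_{k_j}(x_{k_j}) \leq t$, so $x \in \{u \leq t\}$. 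Uniform boundedness of the sets $\{u_k \leq t\}$ (needed to upgrade Kuratowski to Hausdorff convergence of compacta) again follows from coercivity: one shows a common ball contains $\{u_k \leq t\}$ for all large $k$, because otherwise escaping points would contradict the $\liminf$ inequality against the compact set $\{u \leq t+1\}$, say.

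\medskip

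\emph{Sufficiency direction.} Conversely, assume that for every $t \in \R$ there is a sequence $t_k \to t$ with $\{u_k \leq t_k\} \to \{u \leq t\}$ (in the sense that the sets converge in Hausdorff distance, or are eventually empty when the limit is empty). I would verify epi-convergence conditions (i) and (ii). For (ii), given $x \in \dom u$, set $t = u(x)$; then $x \in \{u \leq t\}$, and by the assumed convergence of $\{u_k \leq t_k\}$ to $\{u \leq t\}$ there exist $x_k \in \{u_k \leq t_k\}$ with $x_k \to x$, so $u_k(x_k) \leq t_k$ and hence $\limsup u_k(x_k) \leq t = u(x)$; combined with (i) this forces the limit. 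For (i), suppose $x_k \to x$ and (passing to a subsequence) $u_k(x_k) \to \alpha$; if $\alpha = +\infty$ there is nothing to prove, and otherwise fix $t > \alpha$, so $x_k \in \{u_k \leq t_k'\}$ eventually for the sequence $t_k' \to t$ associated to level $t$, and then $x \in \lim \{u_k \leq t_k'\} = \{u \leq t\}$, i.e. $u(x) \leq t$; letting $t \downarrow \alpha$ gives $u(x) \leq \alpha$. (The case $x \notin \dom u$, i.e. $u(x) = +\infty$, is handled by letting $t \to \infty$ and using that all $\{u \leq t\}$ miss $x$.) For (ii) in the edge case $x \notin \dom u$, one takes $x_k = x$, or more carefully any recovery sequence, and uses that $u_k(x_k) \to +\infty = u(x)$ follows since $x$ eventually avoids every $\{u_k \leq t_k'\}$.

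\medskip

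\emph{Main obstacle.} The delicate point is handling the single excluded level $t = \min_x u(x)$ and, relatedly, keeping careful track of the distinction between ``Hausdorff convergence of nonempty compacta'' and ``eventually empty'' sets, since in $\CVc$ the sublevel set $\{u_k \leq t\}$ can jump from empty to a body exactly as $t$ crosses the (possibly moving) minimum values $\min u_k$. The resolution is that coercivity guarantees, for any $t$ strictly above $\min u$, a uniform compact bound on $\{u_k \leq t\}$ for large $k$, which both rules out pathological escape to infinity and lets one pass freely between Painlev\'e--Kuratowski and Hausdorff convergence; at the minimum level itself, lower semicontinuity of $u$ together with condition~(i) shows the minimum is attained and that $\{u_k \leq t_k\}$ with a suitable $t_k \downarrow \min u$ still converges to $\{u \leq \min u\} = \argmin u$, which is why the hypothesis in the second half of the lemma quantifies over \emph{all} $t$ with an adapted sequence $t_k$ rather than demanding convergence at the fixed level. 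I would reference \cite{beer_rockafellar_wets}, Theorem~3.1 and \cite{rockafellar_wets} for the epigraph-set-convergence machinery and present only the coercive-specific reductions in detail.
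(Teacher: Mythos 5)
The paper itself offers no proof of this lemma; it is quoted from \cite{colesanti_ludwig_mussnig} (Lemma~5) and \cite{beer_rockafellar_wets} (Theorem~3.1), so your proposal can only be measured against the standard arguments in those sources. Your architecture is indeed the standard one: for necessity, the two inclusions for the sublevel sets plus a uniform compactness bound, with the approximation of points of $\{u\leq t\}$ by points of $\{u<t\}$ (valid exactly because $t>\min u$, which is why that level is excluded); for sufficiency, the verification of the two epi-convergence conditions from level-set convergence, which is essentially the Beer--Rockafellar--Wets result, and that half of your sketch is complete and correct.

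There is, however, one genuine gap in the necessity half, and it sits precisely at the ``coercive-specific'' step you promised to do in detail. Every appeal to compactness --- extracting a convergent subsequence from points $x_k$ with $u_k(x_k)\leq t<\min u$, and the claim that a single ball contains $\{u_k\leq t\}$ for all large $k$ --- is justified by saying that escaping points ``would contradict the $\liminf$ inequality against the compact set $\{u\leq t+1\}$.'' That is not an argument: condition (i) of epi-convergence constrains only \emph{convergent} sequences $x_k\to x$, so a sequence with $|x_k|\to\infty$ never meets it, and coercivity of the limit does not by itself transfer to the approximants. Without convexity the claim is simply false: the coercive functions $u_k(x)=\min\{|x|,\,1+|x-k e_1|\}$ epi-converge to $|x|$, yet $\{u_k\leq 2\}$ is not eventually contained in any fixed ball. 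So convexity must enter exactly here, and your sketch never uses it for this step. Two standard repairs: (a) a segment argument --- fix $x_0\in\dom u$ with a recovery sequence $y_k\to x_0$, and if $u_k(x_k)\leq t$ with $|x_k|\to\infty$, evaluate $u_k$ by convexity at the point of the segment $[y_k,x_k]$ lying on a fixed sphere chosen outside $\{u\leq\max(t,u(x_0)+1)\}$; these auxiliary points have a convergent subsequence and condition (i) then yields a contradiction; or (b), more in the spirit of this paper, pass to conjugates: coercivity of $u$ gives $0\in\interior\dom u^*$ by Lemma~\ref{le:conjugate_coercive}, Theorem~\ref{thm:wijsman} and Theorem~\ref{thm:epi_conv_is_nice} give $u_k^*\leq c$ on some ball $rB^n\subset\interior\dom u^*$ for all large $k$, hence $u_k(x)\geq r|x|-c$ uniformly, which bounds all sublevel sets $\{u_k\leq t\}$ at once. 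With that step repaired, the remainder of your plan goes through.
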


Next, we want to recall some results about the convex conjugate or Legendre transform
$$u^*(x)=\sup\nolimits_{y\in\Rn} \big( x\cdot y - u(y)\big)$$
for every $x\in\Rn$ and $u\in\CV$.

\begin{lemma}[\!\!\cite{schneider}, Theorem 1.6.13]
\label{le:u_starstar_is_u}
If $u\in\CV$, then also $u^*\in\CV$ and $u^{**}=u$.
\end{lemma}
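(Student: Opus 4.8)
This is the Fenchel--Moreau biconjugation theorem restricted to the class $\CV$, so I would reconstruct its standard proof from convex analysis. The argument splits into two parts: showing that $u^*$ again lies in $\CV$, and then establishing $u^{**}=u$.

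For the first part, observe that $u^*$ is by definition the pointwise supremum of the affine (hence convex and continuous) functions $x\mapsto x\cdot y-u(y)$ taken over $y\in\dom u$; a supremum of a family of convex, lower semicontinuous functions is again convex and lower semicontinuous, so it only remains to check that $u^*$ is proper. Fixing any $y_0\in\dom u$ gives $u^*(x)\ge x\cdot y_0-u(y_0)>-\infty$ for all $x$, so $u^*$ never equals $-\infty$. To see $u^*\not\equiv+\infty$ I would invoke the fact that every proper convex lower semicontinuous function on $\Rn$ has an affine minorant $u(y)\ge p\cdot y+q$: choosing $t_0<u(y_0)$, the point $(y_0,t_0)$ lies outside the nonempty closed convex set $\epi u$, so it can be strictly separated from $\epi u$ by a hyperplane, and this hyperplane cannot be vertical because $\epi u$ contains the upward ray $\{y_0\}\times[u(y_0),\infty)$ over $y_0\in\dom u$; normalizing the (non-vertical) separating functional then yields the minorant. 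Such a minorant gives $u^*(p)\le -q<\infty$, so $u^*\in\CV$; applying the same statement to $u^*$ shows $u^{**}\in\CV$ as well.

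For $u^{**}=u$, the inequality $u^{**}\le u$ is immediate from the Fenchel--Young inequality $u^*(x)+u(y)\ge x\cdot y$: rearranging and taking the supremum over $x$ gives $u(y)\ge u^{**}(y)$. The reverse inequality is the substantive part, and I would argue by contradiction. If $u^{**}(y_0)<u(y_0)$, pick $s$ with $u^{**}(y_0)\le s<u(y_0)$, so $(y_0,s)\notin\epi u$, and strictly separate this point from the closed convex set $\epi u$ by a hyperplane with normal $(v,\alpha)$. Boundedness of the linear functional from below on the upward-unbounded set $\epi u$ forces $\alpha\ge 0$. If $\alpha>0$, after normalizing $\alpha=1$ the separating inequality reads $v\cdot y+u(y)\ge c>v\cdot y_0+s$ on $\dom u$, which says exactly that $u^*(-v)\le -c$ and hence $u^{**}(y_0)\ge -v\cdot y_0-u^*(-v)\ge c-v\cdot y_0>s$, contradicting $s\ge u^{**}(y_0)$. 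If $\alpha=0$ the hyperplane is vertical and only yields $v\cdot y\ge c>v\cdot y_0$ on $\dom u$; here I would tilt it by combining it with the affine minorant $u(y)\ge p\cdot y+q$ from the first part: for every $\lambda>0$ one gets $u^*(p-\lambda v)\le -q-\lambda c$, whence $u^{**}(y_0)\ge (p-\lambda v)\cdot y_0-u^*(p-\lambda v)\ge (p\cdot y_0+q)+\lambda(c-v\cdot y_0)\to+\infty$ as $\lambda\to\infty$, again contradicting $u^{**}(y_0)\le s$. Thus $u^{**}\ge u$, completing the proof.

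The main obstacle, as always in Fenchel--Moreau, is precisely the degenerate case $\alpha=0$ in the biconjugation step: one must rule out that $y_0$ is separated from $\dom u$ by a purely vertical hyperplane, and the only leverage for this is the existence of a genuine non-vertical affine minorant of $u$, which in turn relies on $u$ being proper and lower semicontinuous. Everything else follows directly from the definition of the supremum and from the separation theorem for a point and a closed convex set.
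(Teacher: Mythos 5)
Your proof is correct: it is the standard separation-based Fenchel--Moreau biconjugation argument, including the two essential points (existence of a non-vertical affine minorant of a proper, lower semicontinuous convex function, and the tilting trick for the degenerate vertical-hyperplane case). The paper does not prove this lemma itself but cites it from Schneider (Theorem 1.6.13), and your argument is essentially the same as that classical proof, so there is nothing further to compare.
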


The following is easy to see and follows directly from the definition of the convex conjugate. See also \cite[Section 3]{mussnig_advances}

\begin{lemma}
\label{le:conjugate_transl_sln_inv}
Let $\oZ:\mathcal{S}\to \R$, with $\mathcal{S}\subseteq \CV$. The operator $\oZ$ is translation invariant if and only if $u\mapsto \oZ(u^*)$ is dually translation invariant and $\oZ$ is $\SLn$ invariant if and only if $u\mapsto \oZ(u^*)$ is $\SLn$ invariant, where $u$ is such that $u^*\in\mathcal{S}$.
\end{lemma}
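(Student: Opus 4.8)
The plan is to reduce everything to three elementary transformation rules for the Legendre transform and then to invoke the involution property $u^{**}=u$ from Lemma~\ref{le:u_starstar_is_u}; after that, both claimed equivalences become pure bookkeeping.

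First I would record, by a direct change of variables in the supremum defining $u^*$, the following identities, valid for every $u\in\CV$. If $\tau$ is the translation $x\mapsto x+v$ and $l_v(x):=x\cdot v$ is the associated linear functional, then
$$(u\circ\tau^{-1})^*=u^*+l_v \qquad\text{and}\qquad (u+l_v)^*=u^*\circ\tau^{-1};$$
in fact either identity follows from the other together with $u^{**}=u$. Likewise, for $\phi\in\SLn$ one gets
$$(u\circ\phi^{-1})^*=u^*\circ\psi^{-1},\qquad \psi:=\phi^{-\mathrm{T}}\in\SLn,$$
using that $\det\phi^{-\mathrm{T}}=1$. I would also note that $v\mapsto l_v$ is a bijection onto the space of linear functionals (equivalently, $v\mapsto(x\mapsto x+v)$ is a bijection onto the set of all translations) and that $\phi\mapsto\phi^{-\mathrm{T}}$ is a bijection of $\SLn$ onto itself.

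Next, writing $\tilde\oZ(u):=\oZ(u^*)$ — which is meaningful exactly for those $u$ with $u^*\in\mathcal{S}$ — the first equivalence falls out of the identity $(u+l_v)^*=u^*\circ\tau^{-1}$: the statement ``$\tilde\oZ(u+l_v)=\tilde\oZ(u)$ for all $v$'' says precisely ``$\oZ(u^*\circ\tau^{-1})=\oZ(u^*)$ for all translations $\tau$''. Since $u\mapsto u^*$ is an involution of $\CV$ by Lemma~\ref{le:u_starstar_is_u}, the argument $u^*$ ranges over all of $\mathcal{S}$ as $u$ ranges over $\{u:u^*\in\mathcal{S}\}$, so this is exactly ``$\oZ$ is translation invariant''. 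The second equivalence is obtained in the same way from $(u\circ\phi^{-1})^*=u^*\circ\psi^{-1}$ together with the fact that $\phi\mapsto\psi=\phi^{-\mathrm{T}}$ is a self-bijection of $\SLn$.

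There is no genuine difficulty here; the only points requiring a moment's attention are keeping straight which translation (resp.\ which $\SLn$ map) is paired with a given linear functional (resp.\ $\SLn$ map) under conjugation, and checking the domain compatibility — namely that, by the involution property, quantifying over all $u$ with $u^*\in\mathcal{S}$ is the same as quantifying over all elements of $\mathcal{S}$, so that passing between $\oZ$ and $\tilde\oZ$ neither gains nor loses test functions. For the function classes actually used in this paper (such as $\CVsf$), which are themselves translation and $\SLn$ invariant, even these points are automatic.
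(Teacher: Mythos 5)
Your proposal is correct and is exactly the argument the paper has in mind: the paper gives no written proof, stating only that the lemma ``follows directly from the definition of the convex conjugate,'' and your conjugation identities $(u+l_v)^*=u^*\circ\tau^{-1}$ and $(u\circ\phi^{-1})^*=u^*\circ\phi^{-\mathrm{T}}$ together with $u^{**}=u$ are precisely that direct verification. The domain-compatibility remark is a sensible touch but introduces nothing beyond what the paper's phrasing already implicitly assumes.
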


The next lemma shows that the Legendre transform is compatible with the valuation property.

\begin{lemma}[\!\!\cite{colesanti_ludwig_mussnig_hess}, Lemma 3.4, Proposition 3.5]
\label{le:conjugate_is_a_val}
Let $u,v\in\CV$. If $u\wedge v$ is convex, then so is $u^* \wedge v^*$. Furthermore,
$$(u\wedge v)^* = u^* \vee v^* \quad \text{and} \quad (u \vee v)^* = u^* \wedge v^*.$$
\end{lemma}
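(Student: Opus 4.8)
The statement bundles three assertions: the identity $(u\wedge v)^*=u^*\vee v^*$, which holds with no hypothesis; the identity $(u\vee v)^*=u^*\wedge v^*$ under the assumption that $u\wedge v$ is convex; and the convexity of $u^*\wedge v^*$ under the same assumption. The plan is to establish them in this order, since only the second requires the hypothesis and any genuine work.

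For the first identity one just unwinds the definition of the Legendre transform: for every $x\in\Rn$,
\[
(u\wedge v)^*(x)=\sup_{y\in\Rn}\bigl(x\cdot y-\min\{u(y),v(y)\}\bigr)=\sup_{y\in\Rn}\max\bigl\{x\cdot y-u(y),\,x\cdot y-v(y)\bigr\}=\max\{u^*(x),v^*(x)\},
\]
because the supremum of a pointwise maximum of two functions equals the maximum of their suprema. The identical chain with $\vee$ replacing $\wedge$ only gives the one-sided bound $(u\vee v)^*\le u^*\wedge v^*$ in general, since a supremum of a pointwise minimum need not equal the minimum of the suprema. I would also note here that $u\wedge v$ is lower semicontinuous (a minimum of two lower semicontinuous functions is) and proper (being $\le u$), so under the hypothesis $u\wedge v\in\CV$; and that then necessarily $\dom u\cap\dom v\neq\emptyset$, since otherwise the identity proved in the next step would force $u^*\wedge v^*\equiv-\infty$, contradicting properness of $u^*$.

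The core of the argument is the reverse inequality $(u\vee v)^*(x)\ge u^*(x)\wedge v^*(x)$ for each fixed $x\in\Rn$. I would set $f(y)=x\cdot y-u(y)$ and $g(y)=x\cdot y-v(y)$; these are upper semicontinuous and concave since $u,v\in\CV$, and the hypothesis that $u\wedge v$ is convex translates exactly into $\max\{f,g\}$ being concave. The desired inequality then reads $\sup_y\min\{f(y),g(y)\}\ge\min\{\sup_y f(y),\sup_y g(y)\}=:m$. Fixing $\e>0$, pick $y_1$ with $f(y_1)>m-\e$ and $y_2$ with $g(y_2)>m-\e$. Along the segment $[y_1,y_2]$ the concave function $\max\{f,g\}$ stays above $m-\e$, so the two sets $A=\{y\in[y_1,y_2]:f(y)\ge m-\e\}$ and $B=\{y\in[y_1,y_2]:g(y)\ge m-\e\}$ cover $[y_1,y_2]$; each is closed (by upper semicontinuity of $f$ and of $g$) and convex (by concavity), hence a closed subsegment, with $y_1\in A$ and $y_2\in B$. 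Two closed subsegments of $[y_1,y_2]$ that cover it and respectively contain its two endpoints must overlap, producing a point $y^*$ with $f(y^*)\ge m-\e$ and $g(y^*)\ge m-\e$; hence $\sup_y\min\{f,g\}\ge m-\e$, and $\e\to0$ yields the claim (for $m=+\infty$ one runs the same argument with $m-\e$ replaced by an arbitrary $M\in\R$). This gives $(u\vee v)^*=u^*\wedge v^*$, and therefore $u^*\wedge v^*$ is convex, being a Legendre transform — a pointwise supremum of affine functions — or, alternatively, by Lemma~\ref{le:u_starstar_is_u} applied to $u\vee v\in\CV$.

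I expect the interchange-of-suprema step of the previous paragraph to be the only real obstacle: it is precisely where convexity of $u\wedge v$ is consumed, through the concavity of $\max\{f,g\}$ that forces the subsegments $A$ and $B$ to overlap, and one has to be slightly careful with upper semicontinuity so that $A$ and $B$ are genuinely closed. A clean way to organize the whole statement is to observe that applying the first identity to $u^*,v^*$ and using $u^{**}=u$, $v^{**}=v$ gives $(u^*\wedge v^*)^*=u\vee v$, so $(u^*\wedge v^*)^{**}=(u\vee v)^*$; convexity of $u^*\wedge v^*$ amounts to its double conjugate agreeing with itself, which is exactly the inequality established above.
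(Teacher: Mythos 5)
Your proof is correct. Note that the paper itself offers no argument for this lemma: it is imported verbatim from \cite{colesanti_ludwig_mussnig_hess} (Lemma 3.4 and Proposition 3.5), so there is no internal proof to compare with, and what you wrote serves as a self-contained substitute. Your decomposition is sound: $(u\wedge v)^*=u^*\vee v^*$ is indeed unconditional (the supremum of a pointwise maximum is the maximum of the suprema), the inequality $(u\vee v)^*\le u^*\wedge v^*$ is the trivial monotonicity half, and the hypothesis is consumed exactly where you say, in the reverse inequality: convexity of $u\wedge v$ makes $\max\{f,g\}$ concave, so the segment $[y_1,y_2]$ lies in $A\cup B$, while concavity and upper semicontinuity of $f$ and $g$ (coming from convexity and lower semicontinuity of $u$ and $v$) make $A$ and $B$ closed convex subsegments containing the respective endpoints, and connectedness forces them to meet; the extended-value bookkeeping also checks out, since properness of $u,v$ gives $u^*,v^*>-\infty$, hence $m>-\infty$, and your point $y^*$ automatically lies in $\dom u\cap\dom v$, which settles properness of $u\vee v$ and of $u^*\wedge v^*$. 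Your closing biconjugation remark --- that $(u^*\wedge v^*)^*=u\vee v$ by the first identity and $u^{**}=u$, so the second identity is precisely the statement that $u^*\wedge v^*$ equals its biconjugate, i.e. is convex --- mirrors how the cited reference packages the content (a convexity lemma plus conjugation formulas), whereas your segment argument gives an elementary direct proof of the crucial inequality rather than quoting it.
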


The following result establishes a connection between coercivity properties of a function and the domain of its conjugate.

\begin{lemma}[\!\!\cite{rockafellar_wets}, Theorem 11.8]
\label{le:conjugate_coercive}
For $u\in\CV$ the following hold true:
	\begin{itemize}
		\item $u$ is coercive if and only if $0\in\interior \dom u^*$.
		\item $u$ is super-coercive if and only if $\dom u^* = \Rn$.
	\end{itemize}
\end{lemma}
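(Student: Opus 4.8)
The plan is to reduce both equivalences to a single elementary principle: a proper, lower semicontinuous convex function on $\Rn$ admits a \emph{linear} lower bound of slope $\e$ exactly when its conjugate is finite on the ball of radius $\e$. Using the biconjugation identity $u=u^{**}$ and the inclusion $u^*\in\CV$ (Lemma~\ref{le:u_starstar_is_u}), together with the standard fact that a convex function is continuous, hence locally bounded, on the interior of its domain, I would first establish two intermediate claims:
\begin{enumerate}
\item[(A)] $0\in\interior\dom u^*$ \ $\iff$ \ there exist $\e>0$ and $C\in\R$ with $u(y)\ge\e|y|-C$ for all $y\in\Rn$;
\item[(B)] $\dom u^*=\Rn$ \ $\iff$ \ for every $M>0$ there is $C_M\in\R$ with $u(y)\ge M|y|-C_M$ for all $y\in\Rn$.
\end{enumerate}
For the ``$\Leftarrow$'' directions one uses that $x\cdot y-\e|y|\le 0$ whenever $|x|\le\e$, so the bound $u(y)\ge\e|y|-C$ gives $u^*(x)=\sup_y(x\cdot y-u(y))\le C$ for all $|x|\le\e$; in case (A) this puts a whole ball around the origin into $\dom u^*$, and in case (B), letting $\e=M$ run over all positive reals, gives $\dom u^*=\Rn$. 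For ``$\Rightarrow$'', the identity $u=u^{**}$ yields $u(y)\ge\sup_{|x|\le r}(x\cdot y-u^*(x))\ge r|y|-\sup_{|x|\le r}u^*(x)$ for every $r>0$; choosing $r$ so small that the compact ball $rB^n$ lies in $\interior\dom u^*$ makes $\sup_{|x|\le r}u^*(x)$ finite and produces the linear lower bound, and when $\dom u^*=\Rn$ this argument works for every $r>0$.

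Next I would identify the right-hand sides of (A) and (B) with coercivity and super-coercivity. For (B) this is essentially immediate: linear lower bounds of every slope force $\liminf_{|y|\to\infty}u(y)/|y|\ge M$ for all $M$, hence $u(y)/|y|\to\infty$; conversely, super-coercivity gives, for each $M$, a radius $R$ with $u(y)\ge(M+1)|y|$ for $|y|\ge R$, while $u$ is bounded below on the compact ball $RB^n$ by lower semicontinuity, so $u(y)\ge M|y|-C_M$ holds globally. For (A) the only nontrivial implication is that coercivity forces a linear lower bound. Here I would first reduce to the normalized case $u\ge 0$, $u(0)=0$ by replacing $u$ with $y\mapsto u(y+y_0)-u(y_0)$, where $y_0$ is a minimizer of $u$ (the minimum is attained since coercive functions have compact sublevel sets); this affects neither coercivity nor $\dom u^*$, because translating the argument of $u$ changes $u^*$ only by an affine summand and subtracting a constant from $u$ changes $u^*$ only by a constant. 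Then, setting $g(r):=\min_{|y|=r}u(y)$ (attained and nonnegative by lower semicontinuity on the sphere), convexity together with $u(0)=0$ gives $u(\tfrac{r_1}{r_2}y)\le\tfrac{r_1}{r_2}u(y)$ for $|y|=r_2$, whence $r\mapsto g(r)/r$ is nondecreasing; coercivity forces $g(r)\to\infty$, so $g(r_0)>0$ for some $r_0>0$, and therefore $u(y)\ge(g(r_0)/r_0)|y|$ for $|y|\ge r_0$, which upgrades to a global linear lower bound using $u\ge 0$. Combining (A) with the first equivalence and (B) with the second proves the lemma.

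I expect the main obstacle to be precisely this last step — extracting a genuinely \emph{linear} lower bound, not merely an unbounded one, from coercivity — since a priori a coercive convex function might seem to grow sublinearly along some direction; the monotonicity of $r\mapsto g(r)/r$ is the one slightly clever observation that rules this out. Everything else is routine bookkeeping: keeping track of possible $+\infty$-values of $u$ when invoking boundedness below on compacta, and making sure in the ``$\Rightarrow$'' parts of (A) and (B) that the relevant ball sits in the \emph{interior} of $\dom u^*$, so that continuity of the convex function $u^*$ on that interior applies.
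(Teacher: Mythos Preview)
The paper does not supply a proof of this lemma; it is quoted from \cite{rockafellar_wets}, Theorem~11.8, and used as a black box. Your argument, by contrast, is a correct self-contained proof. The reduction to claims (A) and (B) via $u=u^{**}$ together with boundedness of convex functions on compact subsets of $\interior\dom u^*$ is the standard route, and your treatment of the one genuinely delicate point---that coercivity forces a \emph{linear} (not merely unbounded) lower bound---via the monotonicity of $r\mapsto g(r)/r$ after normalizing to $u(0)=\min u=0$ is clean and correct. What your approach buys over a bare citation is self-containedness; the cost is a page of convex-analytic bookkeeping that the paper preferred to outsource.
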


We will also need the following theorem due to Wijsman, which shows that the Legendre transform is a continuous operation (see, for example, \cite[Theorem 11.34]{rockafellar_wets}).

\begin{theorem}
\label{thm:wijsman}
If $u_k, u \in\CV$, then $u_k \eto u$ if and only if $u_k^* \eto u^*$.
\end{theorem}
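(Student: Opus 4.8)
The plan is to reduce the equivalence to a single implication and then, for the conjugates, to verify the two clauses defining epi-convergence. Since $u^{**}=u$ and $u_k^{**}=u_k$ by Lemma~\ref{le:u_starstar_is_u}, applying the implication $v_k\eto v\Rightarrow v_k^*\eto v^*$ to the conjugate sequence turns it into $u_k^*\eto u^*\Rightarrow u_k\eto u$; hence it suffices to prove the forward implication. So assume $u_k\eto u$, and let us check that $u_k^*$ and $u^*$ satisfy conditions (i) and (ii).

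Condition (i) for the conjugates is the easy half and uses only condition (ii) for $u_k$. Fix $y\in\Rn$ and a sequence $y_k\to y$. For every $x\in\Rn$ with $u(x)<\infty$, condition (ii) for $u_k$ gives $x_k\to x$ with $u_k(x_k)\to u(x)$, so that
$$u_k^*(y_k)\ \ge\ x_k\cdot y_k-u_k(x_k)\ \longrightarrow\ x\cdot y-u(x),$$
whence $\liminf_k u_k^*(y_k)\ge x\cdot y-u(x)$; taking the supremum over all such $x$ gives $\liminf_k u_k^*(y_k)\ge u^*(y)$. This is condition (i) for $u_k^*$, and it also shows that at any $y$ with $u^*(y)=+\infty$ one has $u_k^*(y)\to+\infty$, so condition (ii) for $u_k^*$ holds at such $y$ with the constant sequence.

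It remains to prove condition (ii) for the conjugates, which is the crux. Let $y$ be such that $u^*(y)<\infty$; we need $y_k\to y$ with $u_k^*(y_k)\to u^*(y)$, and since (i) already gives ``$\ge$'' it suffices to arrange $\limsup_k u_k^*(y_k)\le u^*(y)$. Passing to $v_k(z)=u_k(z)-y\cdot z$ and $v(z)=u(z)-y\cdot z$, which still satisfy $v_k\eto v$, the task becomes: find $\delta_k\to 0$ with $\limsup_k v_k^*(\delta_k)\le v^*(0)=-\inf_{\Rn}v$, i.e.\ affine minorants $z\mapsto \delta_k\cdot z-v_k^*(\delta_k)$ of $v_k$ whose slopes tend to $0$ and whose intercepts are asymptotically no worse than $\inf v$. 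One cannot in general take $\delta_k\equiv 0$: a sequence of affine functions with slopes tending to $0$ epi-converges to the zero function yet is unbounded below, so $v_k^*(0)=-\inf v_k$ may be $+\infty$ for every $k$. The perturbation $\delta_k$ is genuinely needed, and this is the main obstacle.

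I would overcome it by Moreau regularization. For $\lambda>0$ let $e_\lambda w(z)=\inf_{w'}\big(w(w')+\tfrac1{2\lambda}|z-w'|^2\big)$, which is a finite convex function on $\Rn$ for every $w\in\CV$. It is standard (see \cite{rockafellar_wets}) that, for functions in $\CV$, $w_k\eto w$ is equivalent to pointwise (hence locally uniform) convergence $e_\lambda w_k\to e_\lambda w$ for every $\lambda>0$. Combining this with the Moreau identity
$$e_\lambda w(z)+e_{1/\lambda}(w^*)(z/\lambda)=\tfrac1{2\lambda}|z|^2,$$
which follows from $\big(w+\tfrac1{2\lambda}|\cdot|^2\big)^*=w^*\,\square\,\tfrac\lambda2|\cdot|^2$ together with Lemma~\ref{le:u_starstar_is_u}, transfers the convergence of Moreau envelopes from $u_k$ to $u_k^*$ verbatim, and therefore yields $u_k^*\eto u^*$. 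An equivalent and more geometric route notes that $u_k\eto u$ is Painlev\'e--Kuratowski convergence of the epigraphs as closed convex sets, homogenizes to the closed convex cones in $\R^{n+2}$ generated by $\epi u_k\times\{1\}$ (so that conjugacy becomes polarity of cones), and reads off $y_k$ from a supporting hyperplane of $\epi u_k$ near the one realizing $u^*(y)$, using continuity of polarity; concretely, this amounts to producing $\delta_k\in\partial v_k(z_k)$ with $z_k\to 0$, $\delta_k\to 0$ and $v_k(z_k)\to\inf v$, which is precisely the estimate that the quadratic smoothing supplies.
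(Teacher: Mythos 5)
Your argument is correct: the equivalence of epi-convergence with pointwise convergence of Moreau envelopes, combined with the identity $e_\lambda w(z)+e_{1/\lambda}(w^*)(z/\lambda)=\tfrac{1}{2\lambda}|z|^2$, transfers the convergence from $u_k$ to $u_k^*$, and this is essentially the proof of \cite[Theorem 11.34]{rockafellar_wets}, which is exactly what the paper cites for this statement rather than proving it. Note only that the envelope argument is symmetric and already gives both implications at once, so your initial biconjugation reduction and the direct verification of condition (i) for the conjugates, while correct, are not needed.
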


Let $u\in\CV$ and $x\in\Rn$. We call a vector $y\in\Rn$ a \emph{subgradient of $u$ at $x$} if
$$u(z)\geq u(x) + (z-x)\cdot y$$
for every $z\in\Rn$. The set of all subgradients of $u$ at $x$ is called the \emph{subdifferential of $u$ at $x$} and denoted by $\partial u(x)$. Note, that $\partial u(x)$ might be empty. Furthermore, if $u$ is differentiable at $x$, then the only possible subgradient of $u$ at $x$ is the gradient itself and $\partial u(x)=\{\nabla u(x)\}$.

\begin{lemma}[\!\!\cite{rockafellar}, Theorem 23.5]
\label{le:conjugate_subgradients}
For $u\in\CV$ and $x,y\in\Rn$ the following are equivalent:
\begin{itemize}
	\item $y\in\partial u(x)$,
	\item $x\in\partial u^*(y)$,
	\item $x\cdot y = u(x)+u^*(y)$,
	\item $x\in\argmax_{z\in\Rn} \big( y \cdot z - u(z)\big)$
	\item $y\in\argmax_{z\in\Rn} \big( x\cdot z - u^*(z)\big)$.
\end{itemize}
\end{lemma}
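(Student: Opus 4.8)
\textbf{Proof plan for Lemma~\ref{le:conjugate_subgradients} (the equivalence of five conditions).}

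The plan is to establish the cyclic chain of implications
$$y\in\partial u(x) \;\Longrightarrow\; x\cdot y = u(x)+u^*(y) \;\Longrightarrow\; x\in\partial u^*(y) \;\Longrightarrow\; y\in\partial u(x),$$
which already links the first three conditions, and then to observe that the fourth and fifth conditions are mere restatements of two of these. First I would prove $y\in\partial u(x)\Rightarrow x\cdot y = u(x)+u^*(y)$: assuming the subgradient inequality $u(z)\ge u(x)+(z-x)\cdot y$ for all $z$, rearrange it as $y\cdot z - u(z)\le y\cdot x - u(x)$ for all $z$, so that the supremum defining $u^*(y)$ is attained precisely at $z=x$, giving $u^*(y)=y\cdot x - u(x)$, i.e.\ $x\cdot y = u(x)+u^*(y)$. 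This is the crux of the argument and the only place where a genuine computation happens; everything else is bookkeeping. In particular, the same rearrangement read in the other direction shows that $x\cdot y = u(x)+u^*(y)$ is equivalent to $x\in\argmax_{z}(y\cdot z - u(z))$, which is condition four.

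Next I would close the loop via the conjugate. The relation $x\cdot y = u(x)+u^*(y)$ is symmetric in the roles of $u,x$ and $u^*,y$ once one recalls $u^{**}=u$ from Lemma~\ref{le:u_starstar_is_u}: it reads $y\cdot x = u^*(y) + u^{**}(x)$, which by the step just proved (applied to $u^*$ in place of $u$) is equivalent to $x\in\partial u^*(y)$ and also to $y\in\argmax_z(x\cdot z - u^*(z))$, i.e.\ condition five. Finally, $x\in\partial u^*(y)$ expanded via the same first step (now for $u^*$) yields $y\cdot x = u^*(y)+u^{**}(x)=u^*(y)+u(x)$, returning us to the symmetric identity and hence, by the first implication read backwards, to $y\in\partial u(x)$. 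Thus all five statements are equivalent.

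The only subtlety to watch is the properness/finiteness issue: the identity $x\cdot y = u(x)+u^*(y)$ presupposes that both $u(x)$ and $u^*(y)$ are finite, and one must check this is automatic whenever any of the five conditions holds. If $y\in\partial u(x)$, the subgradient inequality forces $u(x)<+\infty$ (take $z=x$) and $u(x)>-\infty$ since $u\in\CV$ is proper; and then $u^*(y)=y\cdot x-u(x)$ is finite as well. The symmetric conditions handle $u^*(y)$ finite and then $u(x)=u^{**}(x)$ finite. So no infinite-arithmetic pathologies arise, and the equivalences hold as stated. I do not expect any real obstacle here — the proof is essentially the classical Fenchel–Young argument — but the cyclic organization above keeps it short and avoids re-deriving each implication separately.
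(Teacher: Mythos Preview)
Your argument is correct and is the standard Fenchel--Young proof. The paper does not give its own proof of this lemma: it is quoted without proof as Theorem~23.5 of Rockafellar's \emph{Convex Analysis}, so there is no approach to compare against.
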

Here, $\argmax_{z\in V} f(z)$ denotes the points in the set $V$ at which the function values of $f$ are maximized on $V$.

For further results on convex functions as well as convex geometry in general we refer to the books of Gruber \cite{gruber}, Rockafellar \& Wets \cite{rockafellar_wets} and Schneider \cite{schneider}.

\section{Valuations on Convex Functions}
In this section we discuss the operators that appear in Theorem~\ref{thm:main_result}. In the following we say that a valuation $\oZ:\mathcal{S}\to\R$, where $\mathcal{S}$ is a space of (extended) real-valued functions on $\Rn$, is \emph{homogeneous of degree $p\in\R$} if $\oZ(u_{\lambda})=\lambda^p \oZ(u)$ for every $u\in\mathcal{S}$ and $\lambda >0$, where $u_\lambda(x):=u(x/\lambda)$ for $x\in\Rn$.

The following operator is a functional analog of the Euler characteristic.
\begin{lemma}[\!\!\cite{colesanti_ludwig_mussnig}, Lemma 12]
\label{le:min_is_a_val}
For a continuous function $\zeta:\R\to\R$ the map
$$u\mapsto \zeta\big(\min\nolimits_{x\in\Rn} u(x)\big)$$
defines a continuous, $\SLn$ and translation invariant valuation on $\CVc$ that is homogeneous of degree $0$.
\end{lemma}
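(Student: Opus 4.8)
The plan is to set $\oZ(u):=\zeta\bigl(m(u)\bigr)$ where $m(u):=\min_{x\in\Rn}u(x)$, and to verify the required properties in turn. First I would observe that $\oZ$ is well defined: every $u\in\CVc$ is proper and lower semicontinuous with bounded, hence compact, sublevel sets, so the infimum of $u$ is attained and $m(u)\in\R$; since $\zeta$ is continuous, $\oZ(u)\in\R$. The invariance statements are then immediate, because $m$ is unchanged under precomposition with a bijection of $\Rn$ and under the dilation $u\mapsto u_\lambda$: for $\phi\in\SLn$ and a translation $\tau$ one has $m(u\circ\phi^{-1})=m(u)$ and $m(u\circ\tau^{-1})=m(u)$, while $m(u_\lambda)=m(u)$ since $u_\lambda(x)=u(x/\lambda)$. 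Hence $\oZ$ is $\SLn$ invariant, translation invariant, and homogeneous of degree $0$.

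The heart of the proof is the valuation property. Let $u,v\in\CVc$ be such that $u\vee v,u\wedge v\in\CVc$ as well. From $(u\wedge v)(x)=\min\bigl(u(x),v(x)\bigr)$ one gets $m(u\wedge v)=\min\bigl(m(u),m(v)\bigr)$ directly, with no use of convexity. For the join, $u\vee v\ge u$ and $u\vee v\ge v$ pointwise give $m(u\vee v)\ge\max\bigl(m(u),m(v)\bigr)$; for the reverse inequality I would put $t:=\max\bigl(m(u),m(v)\bigr)$ and use that $\{u\wedge v\le t\}=\{u\le t\}\cup\{v\le t\}$ is convex because $u\wedge v$ is. The sets $\{u\le t\}$ and $\{v\le t\}$ are nonempty (each contains the corresponding minimizer, since $m(u)\le t$ and $m(v)\le t$) and closed (by lower semicontinuity), and the union of two disjoint nonempty closed convex sets is not convex, since a segment joining a point of one to a point of the other would be split into two nonempty relatively closed subsets. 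Hence $\{u\le t\}\cap\{v\le t\}\neq\emptyset$, and any point $x^*$ in this intersection satisfies $(u\vee v)(x^*)=\max\bigl(u(x^*),v(x^*)\bigr)\le t$, so $m(u\vee v)\le t$. Thus $m(u\vee v)=\max\bigl(m(u),m(v)\bigr)$, and together with the formula for $m(u\wedge v)$ this says that $\{m(u),m(v)\}$ and $\{m(u\wedge v),m(u\vee v)\}$ coincide as multisets. Summing $\zeta$ over this common multiset yields $\oZ(u)+\oZ(v)=\oZ(u\wedge v)+\oZ(u\vee v)$.

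For continuity, suppose $u_k\eto u$ in $\CVc$. Since $\zeta$ is continuous it suffices to prove $m(u_k)\to m(u)$, and here I would invoke Lemma~\ref{le:hd_conv_lvl_sets}. For $t<m(u)$ the set $\{u\le t\}$ is empty, so $\{u_k\le t\}$ is eventually empty and $m(u_k)>t$ for all large $k$, giving $\liminf_{k\to\infty}m(u_k)\ge m(u)$. For $t>m(u)$ the set $\{u\le t\}$ is a nonempty element of $\Kn$ and $\{u_k\le t\}\to\{u\le t\}$ in the Hausdorff metric, so $\{u_k\le t\}$ is nonempty for all large $k$ and $m(u_k)\le t$ there, giving $\limsup_{k\to\infty}m(u_k)\le m(u)$. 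Letting $t\to m(u)$ in both estimates yields $m(u_k)\to m(u)$, hence $\oZ(u_k)\to\oZ(u)$.

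I expect the one genuinely delicate point to be the identity $m(u\vee v)=\max\bigl(m(u),m(v)\bigr)$: it fails for arbitrary convex $u,v$ (for instance $u(x)=x^2$, $v(x)=(x-1)^2$ on $\R$, where $m(u)=m(v)=0$ but $m(u\vee v)=\tfrac14$), so the hypothesis that $u\wedge v$ be convex must be used essentially, and the sublevel-set connectedness argument above is the clean way to extract it.
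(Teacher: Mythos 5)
Your proof is correct. Note that the paper itself does not prove this statement; it is imported as Lemma~12 of \cite{colesanti_ludwig_mussnig}, so there is no internal argument to compare against. Your self-contained verification follows the standard route and all steps check out: well-definedness from properness, lower semicontinuity and coercivity (compact nonempty sublevel sets, so the minimum is attained and finite); invariance and degree-$0$ homogeneity because the minimal value is unchanged under composition with bijections of $\Rn$; the identity $\min_{x}(u\wedge v)(x)=\min\bigl(\min u,\min v\bigr)$ trivially; and, for the crucial identity $\min_{x}(u\vee v)(x)=\max\bigl(\min u,\min v\bigr)$, the connectedness argument that the convex set $\{u\wedge v\le t\}=\{u\le t\}\cup\{v\le t\}$ (convexity of $u\wedge v$ being exactly where the hypothesis enters, as your one-dimensional counterexample rightly emphasizes) forces the two nonempty closed convex sublevel sets to intersect, so that a common point witnesses $\min (u\vee v)\le t$. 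The continuity argument via Lemma~\ref{le:hd_conv_lvl_sets}, sandwiching $\min u_k$ between levels $t<\min u$ (eventually empty sublevel sets) and $t>\min u$ (eventually nonempty, by Hausdorff convergence to a nonempty body), is also complete, and composing with the continuous $\zeta$ finishes the proof.
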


By combining the last operator with the Legendre transform we obtain a dually translation invariant valuation.

\begin{lemma}[\!\!\cite{mussnig_advances}, Lemma 4.9]
\label{le:min_is_a_val_polar}
For a continuous function $\zeta:\R\to\R$ the map
\begin{equation}
\label{eq:min_is_a_val_polar}
u\mapsto \zeta\big(\min\nolimits_{x\in\Rn} u^*(x)\big) = \zeta\big(-u(0)\big)
\end{equation}
defines a continuous, $\SLn$ and dually translation invariant valuation on $\CVoin$ that is homogeneous of degree 0.
\end{lemma}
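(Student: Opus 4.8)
The plan is to obtain every assertion by transporting, through the Legendre transform, the corresponding property of the operator $\oY\colon\CVc\to\R$, $\oY(w)=\zeta\big(\min_{x\in\Rn}w(x)\big)$, which was established in Lemma~\ref{le:min_is_a_val}. First I would verify the identity in \eqref{eq:min_is_a_val_polar}. For $u\in\CVoin$ the value $u(0)$ is finite, and Lemma~\ref{le:u_starstar_is_u} gives
\begin{equation*}
u(0)=u^{**}(0)=\sup\nolimits_{x\in\Rn}\big(0\cdot x-u^*(x)\big)=-\inf\nolimits_{x\in\Rn}u^*(x),
\end{equation*}
so $\inf_{x\in\Rn}u^*(x)=-u(0)$ is finite. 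Since $0\in\interior\dom u$, the subdifferential $\partial u(0)$ is nonempty, and for any $y_0\in\partial u(0)$ Lemma~\ref{le:conjugate_subgradients} (with $x=0$) yields $0\in\partial u^*(y_0)$; hence $y_0$ minimizes $u^*$, the infimum is attained, and $\min_{x\in\Rn}u^*(x)=-u(0)$. In particular the operator is well defined on $\CVoin$.

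Next I would record that the Legendre transform matches up the two domains: by Lemma~\ref{le:conjugate_coercive}, a function $u\in\CV$ satisfies $0\in\interior\dom u$ if and only if $u^*$ is coercive, so $\{u^*:u\in\CVoin\}=\CVc$ and $\oZ=\oY\circ(\,\cdot\,)^*$. The valuation property then follows from that of $\oY$ on $\CVc$ together with Lemma~\ref{le:conjugate_is_a_val}: whenever $u,v,u\vee v,u\wedge v\in\CVoin$, their conjugates lie in $\CVc$, $(u\vee v)^*=u^*\wedge v^*$ is convex, $(u\wedge v)^*=u^*\vee v^*$, and hence
\begin{equation*}
\oZ(u\vee v)+\oZ(u\wedge v)=\oY(u^*\wedge v^*)+\oY(u^*\vee v^*)=\oY(u^*)+\oY(v^*)=\oZ(u)+\oZ(v);
\end{equation*}
alternatively this is immediate from $\oZ(u)=\zeta(-u(0))$ and $\{(u\vee v)(0),(u\wedge v)(0)\}=\{u(0),v(0)\}$. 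For the invariance properties, $\oY$ is translation and $\SLn$ invariant on $\CVc$ by Lemma~\ref{le:min_is_a_val}, so Lemma~\ref{le:conjugate_transl_sln_inv} (with $\mathcal{S}=\CVc$) shows that $u\mapsto\oY(u^*)=\oZ(u)$ is dually translation invariant and $\SLn$ invariant on $\CVoin$.

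For continuity I would combine Wijsman's theorem (Theorem~\ref{thm:wijsman}) with the continuity of $\oY$ on $\CVc$: if $u_k\eto u$ in $\CVoin$ then $u_k^*\eto u^*$ with $u_k^*,u^*\in\CVc$, so $\oZ(u_k)=\oY(u_k^*)\to\oY(u^*)=\oZ(u)$ — equivalently, since $0\in\interior\dom u$, Theorem~\ref{thm:epi_conv_is_nice} gives $u_k(0)\to u(0)$, after which continuity of $\zeta$ suffices. Homogeneity of degree $0$ is trivial: $u_\lambda(0)=u(0/\lambda)=u(0)$, whence $\oZ(u_\lambda)=\zeta(-u(0))=\oZ(u)$ for all $\lambda>0$. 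The only step that is not purely formal is the first paragraph, where one must check that $u^*$ genuinely attains its minimum on $\CVoin$; the rest is bookkeeping, and the main thing to be careful about is that membership in $\CVc$ (respectively $\CVoin$) is preserved at each application of the Legendre transform.
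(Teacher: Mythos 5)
Your proposal is correct, and it follows exactly the route the paper intends: the lemma is only quoted here from \cite{mussnig_advances} (Lemma 4.9), but transporting Lemma~\ref{le:min_is_a_val} from $\CVc$ to $\CVoin$ via the Legendre transform — using Lemma~\ref{le:u_starstar_is_u}, Lemma~\ref{le:conjugate_coercive} for $\{u^*:u\in\CVoin\}=\CVc$, Lemma~\ref{le:conjugate_is_a_val}, Lemma~\ref{le:conjugate_transl_sln_inv} and Theorem~\ref{thm:wijsman} — is precisely how the paper frames this operator, and your attainment argument $\min_{x\in\Rn}u^*(x)=-u(0)$ through $\partial u(0)\neq\emptyset$ and Lemma~\ref{le:conjugate_subgradients} is sound (alternatively, $u^*\in\CVc$ attains its minimum directly). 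Your insistence on $0\in\interior\dom u$ in the continuity step is exactly the point emphasized in the remark following the lemma, where continuity is shown to fail on the larger space $\{u\in\CV\,:\,0\in\dom u\}$.
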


\begin{remark}
Note, that $u\mapsto \zeta\big(-u(0)\big)$ is also well defined on
$$\{u\in\CV \,:\, 0 \in \dom u\} \supset \CVoin$$
and in \cite[Lemma 4.9]{mussnig_advances} it is wrongfully claimed that even on this larger space \eqref{eq:min_is_a_val_polar} still defines a continuous, $\SLn$ and dually translation invariant valuation. To see that this valuation is not continuous anymore let $\l_K\in\CVc$ be defined via
$$\{\l_K\leq t\} = t K$$
for every $K\in\Kn$ with $0\in K$ and $t \geq 0$. Let $P_k:=[-1/k,1]\times[-1,1]^{n-1}$ and let $u_k=\l_{P_k}\circ \tau_k^{-1}$, where $\tau_k(x)=x+e_1/k$ for $x\in\Rn$ and $k\in\N$, where $e_1$ denotes the first vector of the standard basis of $\Rn$. Observe, that $u_k(0)=1$ for every $k\in\N$. By Lemma~\ref{le:hd_conv_lvl_sets} it is easy to see that $u_k\eto \l_P$ as $k\to\infty$, where $P:=[0,1]\times[-1,1]^{n-1}$ but $\l_P(0)=0$. In particular, $0\in \dom \l_P$ but $\l_P  \notin \CVoin$.
\end{remark}

\begin{lemma}[\!\!\cite{colesanti_ludwig_mussnig}, Lemma 16]
\label{le:int_is_a_val}
For a continuous function $\zeta:\R\to[0,\infty)$ with finite moment of order $n-1$, the map
$$u\mapsto \int_{\dom u} \zeta\big(u(x)\big) \d x$$
defines a non-negative, continuous, $\SLn$ and translation invariant valuation on $\CVc$ that is homogeneous of degree $n$.
\end{lemma}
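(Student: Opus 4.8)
\emph{Overview of the plan.} I would verify the defining properties one at a time; all but continuity are routine, so the bulk of the work is the last one. Write $\oZ(u) := \int_{\dom u}\zeta(u(x))\,\d x$. For well-definedness and non-negativity, note $\zeta\ge 0$, so only finiteness is at issue. Set $m := \min_{x\in\Rn}u(x)$ (attained, since $u$ is coercive) and $F(t) := V_n(\{u\le t\})$, which is finite for every $t$ because coercivity forces all sublevel sets to be bounded. The pushforward of Lebesgue measure on $\dom u$ under $u$ is the Lebesgue--Stieltjes measure $\d F$, so $\oZ(u) = \int_{[m,\infty)}\zeta(t)\,\d F(t)$. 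From $\{u\le\lambda s+(1-\lambda)t\}\supseteq\lambda\{u\le s\}+(1-\lambda)\{u\le t\}$ and the Brunn--Minkowski inequality, $G := F^{1/n}$ is concave and non-decreasing on $[m,\infty)$. Coercivity together with convexity yields constants $a>0$, $b\ge0$ with $u(x)\ge a|x|-b$ on $\Rn$ (first pick $R$ with $\{u\le m+1\}\subseteq RB^n$, then compare $u$ along rays from a minimizer using convexity), hence $G(t)\le c(t+b)$ for $t\ge m+1$; being concave and so bounded, $G$ is Lipschitz on $[m+1,\infty)$. Splitting $\int_{[m,\infty)}\zeta\,\d F$ at $m+1$, the part over $[m,m+1]$ is at most $(\max_{[m,m+1]}\zeta)\,F(m+1)$, while over $(m+1,\infty)$ one writes $\d F = nG^{n-1}\,\d G$ and bounds the integral by a constant times $\int_{m+1}^{\infty}\zeta(t)(1+t^{n-1})\,\d t$, finite because $\zeta$ has finite moment of order $n-1$.

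\emph{Valuation, invariances, homogeneity.} These I would dispatch quickly. For the valuation property, for every $x\in\Rn$ the multiset identity $\{u(x),v(x)\}=\{(u\vee v)(x),(u\wedge v)(x)\}$ gives, interpreting the integrand as $\zeta$ of the value where it is finite and $0$ otherwise, $\mathbbm 1_{\{u(x)<\infty\}}\zeta(u(x))+\mathbbm 1_{\{v(x)<\infty\}}\zeta(v(x)) = \mathbbm 1_{\{(u\vee v)(x)<\infty\}}\zeta((u\vee v)(x))+\mathbbm 1_{\{(u\wedge v)(x)<\infty\}}\zeta((u\wedge v)(x))$; integrating over $\Rn$ and using $\dom(u\vee v)=\dom u\cap\dom v$, $\dom(u\wedge v)=\dom u\cup\dom v$ gives $\oZ(u)+\oZ(v)=\oZ(u\vee v)+\oZ(u\wedge v)$. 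For $\phi\in\SLn$ the substitution $x\mapsto\phi x$, together with $\dom(u\circ\phi^{-1})=\phi(\dom u)$ and $|\det\phi|=1$, gives $\SLn$ invariance; the substitution $x\mapsto x+y$ gives translation invariance; and $x\mapsto\lambda x$ gives $\oZ(u_\lambda)=\lambda^n\oZ(u)$.

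\emph{Continuity -- the main obstacle.} Let $u_k\eto u$ in $\CVc$. First I would record that, by Lemma~\ref{le:hd_conv_lvl_sets}, $\{u_k\le t\}\to\{u\le t\}$ in the Hausdorff metric for every $t\ne m$, so by continuity of volume with respect to the Hausdorff metric $F_k(t):=V_n(\{u_k\le t\})\to F(t)$ at every continuity point $t\ne m$ of $F$; applying the lemma with $t<m$ also shows $\{u_k\le t\}=\emptyset$ eventually, i.e.\ $\min u_k\to m$. The crux is a \emph{uniform} tail estimate. I would establish a single pair $a>0$, $b\ge0$ with $u_k(x)\ge a|x|-b$ for all large $k$ (and, after enlarging $b$, also for $u$): Hausdorff convergence of $\{u_k\le m+1\}$ puts the minimizers $z_k$ in a fixed ball, and then the ray argument from the first step, combined with $\min u_k\to m$, gives the bound. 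Consequently each $G_k := F_k^{1/n}$ is concave with $G_k(t)\le c(t+b)$ and uniformly Lipschitz on $[T_0,\infty)$ for a $k$-independent $T_0$, so there is a function $\eta$ with $\eta(A)\to0$ as $A\to\infty$ such that $\int_{\{u_k>A\}}\zeta(u_k(x))\,\d x=\int_{(A,\infty)}\zeta\,\d F_k\le\eta(A)$ for all large $k$ and for $u$.

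\emph{Conclusion.} To finish, given $\e>0$ I would fix a large level $A$ with $\eta(A)<\e$ and $V_n(\{u=A\})=0$ (valid for all but countably many $A$). Then $\oZ(u_k)=\int_{\{u_k\le A\}}\zeta(u_k)\,\d x+(\text{tail}\le\eta(A))$ and $\bigl|\oZ(u)-\int_{\{u\le A\}}\zeta(u)\,\d x\bigr|\le\eta(A)$. On $\{u_k\le A\}\subseteq((A+b)/a)B^n$ the integrand is dominated by $(\max_{[m-1,A]}\zeta)\,\mathbbm 1_{((A+b)/a)B^n}$ for large $k$; since $u_k\to u$ locally uniformly off $\partial\dom u$ by Theorem~\ref{thm:epi_conv_is_nice} and $u(x)\le\liminf_k u_k(x)$ for every $x$, one has $\zeta(u_k)\mathbbm 1_{\{u_k\le A\}}\to\zeta(u)\mathbbm 1_{\{u\le A\}}$ at a.e.\ $x$ (the exceptional set being the null set $\partial\dom u\cup\{u=A\}$). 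Dominated convergence then gives $\int_{\{u_k\le A\}}\zeta(u_k)\,\d x\to\int_{\{u\le A\}}\zeta(u)\,\d x$, whence $\limsup_k|\oZ(u_k)-\oZ(u)|\le2\e$, and letting $\e\to0$ completes the proof. I expect the step I just outlined -- interchanging limit and integral against the \emph{unbounded} weight $\zeta$ -- to be the main obstacle, and it is precisely the uniform linear lower bound on the $u_k$, available because the epi-limit $u$ is coercive, that makes it go through.
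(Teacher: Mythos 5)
Your argument is correct: finiteness via the layer-cake representation and the linear lower bound $u(x)\ge a|x|-b$, the pointwise multiset identity for the valuation property, and, for continuity, the uniform cone bound along epi-convergent sequences combined with the uniform tail estimate and dominated convergence all hold up, and the exceptional sets $\partial\dom u\cup\{u=A\}$ are indeed null. Note that the paper offers no proof of this lemma---it is imported from \cite{colesanti_ludwig_mussnig}, Lemma 16---and your argument follows essentially the same route as that source (sublevel-set/layer-cake representation, coercivity estimates that are uniform along epi-convergent sequences via Lemma~\ref{le:hd_conv_lvl_sets}, then dominated convergence), with the Brunn--Minkowski concavity of $F^{1/n}$ serving as a tidy self-contained way to control the growth of $\d F$ against the finite $(n-1)$-moment of $\zeta$.
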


The next lemma shows that the moment condition for the function $\zeta$ is necessary, even if one restricts to super-coercive, convex functions. 

\begin{lemma}
\label{le:no_moment_no_finite}
If $\zeta\in C(\R)$ is non-negative such that $\int_0^{\infty} t^{n-1} \zeta(t) \d t = \infty$, then there exists $u_\zeta\in\CVsf$ such that
$$\int_{\Rn} \zeta\big(u_\zeta(x)\big) \d x = \infty.$$
\end{lemma}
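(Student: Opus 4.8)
The plan is to construct $u_\zeta$ as a radially symmetric, piecewise-linear convex function whose graph grows fast enough to be super-coercive, but which nevertheless spends "a lot of volume" at each level, so that $\int_{\R^n}\zeta(u_\zeta)$ diverges. The guiding heuristic is the layer-cake formula: for a convex function $u$ with bounded sublevel sets one has
$$\int_{\R^n}\zeta\big(u(x)\big)\d x = \int_{\min u}^{\infty} \zeta(t)\, \d V_n\big(\{u\le t\}\big) = \int_{\min u}^{\infty} \zeta(t)\, \frac{\d}{\d t}V_n(\{u\le t\})\,\d t$$
(interpreted via a Stieltjes integral), so the integral is large precisely when $V_n(\{u\le t\})$ increases quickly where $\zeta(t)$ is large. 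Since $\int_0^\infty t^{n-1}\zeta(t)\d t=\infty$, I want the sublevel sets to behave roughly like Euclidean balls of radius comparable to $t$ — that would give $\d V_n(\{u\le t\}) \sim t^{n-1}\d t$ and reproduce the divergent integral — but a function with $\{u\le t\}\approx tB^n$ is only coercive, not super-coercive (it behaves like $|x|$). The fix is to let the sublevel sets grow like $t$ on a sequence of "long" intervals and then grow very slowly (so the function shoots up steeply) on short connecting intervals; if the long intervals are chosen to capture enough of the mass $\int_0^\infty t^{n-1}\zeta\,\d t$, the sum over intervals still diverges, while the steep pieces force super-coercivity.

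Concretely, I would pick an increasing sequence $t_k\to\infty$ with $\int_{t_k}^{t_{k+1}} t^{n-1}\zeta(t)\,\d t \ge 1$ for every $k$ (possible since the full integral diverges), and define a convex, increasing, piecewise-linear function $g:[0,\infty)\to[0,\infty)$ of one real variable with $g(0)=0$ as follows: on a sequence of radius-intervals $[r_k, r_k + \ell_k]$ the slope of $g$ is some small $\e_k>0$ chosen so that $g$ rises exactly from $t_k$ to $t_{k+1}$ there (so $\ell_k \e_k = t_{k+1}-t_k$), and on the short connecting intervals the slope jumps up to a large value $M_k\to\infty$. Convexity just requires the slopes to be nondecreasing, which I arrange by taking $\e_k$ nondecreasing and $M_k$ nondecreasing and dominating; the super-coercivity of $u_\zeta(x):=g(|x|)$ follows because its slope tends to $+\infty$, hence $u_\zeta(x)/|x|\to\infty$. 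Then $\{u_\zeta\le t\} = r(t)B^n$ where $r(t)$ is the (generalized) inverse of $g$, and for $t\in[t_k,t_{k+1}]$ we have $r(t)\ge r_k$ with $r'(t)=1/\e_k$ on that range; choosing the geometry so that $r_k \ge c\, t_k$ for a fixed constant $c>0$ (which costs nothing: the connecting intervals only add length) gives
$$\int_{t_k}^{t_{k+1}} \zeta(t)\,\d V_n(\{u_\zeta\le t\}) \;=\; n\omega_n\int_{t_k}^{t_{k+1}} \zeta(t)\, r(t)^{n-1} r'(t)\,\d t \;\ge\; \frac{n\omega_n c^{n-1}}{\e_k}\int_{t_k}^{t_{k+1}} t^{n-1}\zeta(t)\,\d t \;\ge\; \frac{n\omega_n c^{n-1}}{\e_k},$$
where $\omega_n=V_n(B^n)$. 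Summing over $k$ and using that $\e_k$ may be taken bounded (say $\e_k\le 1$) forces $\int_{\R^n}\zeta(u_\zeta)=\infty$, as desired. Finally $u_\zeta\in\CVsf$: it is real-valued, convex (nondecreasing slopes composed with the norm), and super-coercive by the slope condition.

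The main obstacle is the simultaneous balancing of the two competing demands: the slopes $\e_k$ on the "long" pieces must stay bounded away from $\infty$ (indeed bounded, so that $\sum 1/\e_k=\infty$) to keep the volume integral divergent, while the overall slope profile must still tend to $+\infty$ for super-coercivity — this is why the steep connecting pieces are essential and why one must check that inserting them does not destroy the lower bound $r_k\ge c\,t_k$ (it does not, since they only lengthen $r(t)$). A secondary technical point is justifying the layer-cake / coarea computation for a piecewise-linear radial function, including handling the corners where $g$ is not differentiable; this is standard (the set of such radii has measure zero, and $r(t)$ is Lipschitz and monotone so the change of variables $t\mapsto r(t)$ is valid), so I would only remark on it rather than belabor it. One should also double-check the degenerate case $n=1$ is excluded or handled, but since the ambient paper assumes $n\ge 2$ and $\zeta\ge 0$ with divergent $(n-1)$-st moment, the construction above goes through verbatim for all $n\ge 2$.
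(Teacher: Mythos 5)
There is a genuine flaw in your construction: it is incompatible with convexity. The derivative of a convex function of one variable is nondecreasing, so once the slope of $g$ has jumped up to the large value $M_k$ on a ``short connecting interval,'' it can never come back down to a small slope $\e_{k+1}\le 1$ on the next long interval. Writing out your slope profile along the radius, convexity forces $\e_1\le M_1\le \e_2\le M_2\le\cdots$, hence $\e_{k+1}\ge M_k\to\infty$; this contradicts the requirement, which you state explicitly and which your final estimate relies on, that the $\e_k$ stay bounded so that $\sum_k 1/\e_k=\infty$. The tension you identify at the end (bounded slopes on the mass-carrying pieces versus slopes tending to $+\infty$ for super-coercivity) is real, but the device of ``steep connecting pieces'' cannot resolve it: any convex radial profile that is super-coercive must have \emph{all} its slopes eventually tend to $+\infty$, so both the bound $\e_k\le 1$ and (with the $t_k$ fixed in advance) the lower bound $r_k\ge c\,t_k$ for a fixed $c>0$ fail.

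The repair requires a different balancing act, which is what the paper's proof does: take a single nondecreasing slope sequence diverging \emph{slowly}, namely slope $k^{1/n}$ on the piece where the function rises from $t_{k-1}$ to $t_k$, and choose the levels $t_k$ \emph{adaptively} so that $\int_0^{t_k} t^{n-1}\zeta(t)\,\d t\ge k$. Then the $k$-th piece contributes at least a constant times $\tfrac1k\int_{t_{k-1}}^{t_k}t^{n-1}\zeta(t)\,\d t$ (the factor $k^{-1}=k^{-(n-1)/n}\cdot k^{-1/n}$ being the price of the diverging slope), provided one also verifies the geometric inequality $r_{k-1}\ge t_{k-1}/k^{1/n}$, which the paper proves by induction; the divergence of the resulting series then follows from the growth condition on the partial integrals. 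In short: instead of keeping the slope small where $\zeta$ carries mass, one lets the slope diverge at a controlled rate and compensates by how far out the levels $t_k$ are pushed. Your layer-cake/change-of-variables framework is fine and matches the paper's computation; it is the choice of slope profile that must be redone.
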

\begin{proof}
Let $t_0=0$. By the assumption on $\zeta$ there exists numbers $t_k >0$, $k\in\N$ such that $\int_0^{t_k} t^{n-1} \zeta(t) \d t \geq k$ and $t_k-t_{k-1} \geq 1$. Let $r_0=0$ and let
$$r_k=\frac{t_k-t_{k-1}}{k^{1/n}}+r_{k-1}$$
for every $k\geq 1$. We now set $v_\zeta(r)=k^{1/n}(r-r_{k-1})+t_{k-1}$ for every $r_{k-1}\leq r <r_k$ and for every $k\geq 1$. Note, that by the choice of $t_k$ we have $\lim_{k\to\infty} r_k = +\infty$, which shows that $v_\zeta(r)$ is finite. Furthermore, it is easy to see that $v_\zeta(r_{k-1})=t_{k-1}$ and
$$\lim_{r\to r_k} v_\zeta(r) = k^{1/n}\left( \left(\frac{t_k-t_{k-1}}{k^{1/n}}+r_{k-1}\right) - r_{k-1}\right) + t_{k-1} = t_k.$$
Hence, $v_\zeta$ is continuous and furthermore $v_\zeta'(r)=k^{1/n}$ for $r_{k-1}<r < r_k$ and every $k\geq 1$. In particular, $v_\zeta'$ is unbounded and therefore $x\mapsto u_\zeta(x):=v_\zeta(|x|)$ defines a super-coercive, convex function on $\Rn$. This gives
\begin{align*}
\int_{\Rn}\zeta\big(u_\zeta(x)\big) \d x &= n v_n \int_0^{+\infty} r^{n-1} \zeta\big(v_\zeta(r)\big) \d r\\
&= n v_n \sum_{k=1}^{+\infty} \int_{r_{k-1}}^{r_k} r^{n-1} \zeta\big(v_\zeta(r)\big) \d r = \begin{vmatrix}
t=v_\zeta(r)\\
\d t/\d r = k^{1/n}
\end{vmatrix}\\
&= n v_n \sum_{k=1}^{+\infty} \int_{t_{k-1}}^{t_k} \left(t\frac{1}{k^{1/n}}+r_{k-1} - \frac{t_{k-1}}{k^{1/n}}\right)^{n-1} \zeta(t) \frac{1}{k^{1/n}} \d t,
\end{align*}
where $v_n$ is the volume of the $n$-dimensional unit ball. By the definition of $t_k$ this expression diverges if $r_{k-1}-\frac{t_{k-1}}{k^{1/n}}\geq 0$. Hence, it remains prove this inequality, which we will do by induction on $k$. The statement is obviously true for $k=1$, since $r_0=t_0=0$. Furthermore, it is easy to see, that $r_1=t_1$ and hence the statement also holds true for $k=2$. Assume now, that the statement holds for a $k\in\N$, that is $r_{k-1}-\frac{t_{k-1}}{k^{1/n}}\geq 0$. By definition of $r_k$ we have
\begin{align*}
r_k-\frac{t_k}{(k+1)^{1/n}} &= \frac{t_k-t_{k-1}}{k^{1/n}}+r_{k-1} - \frac{t_k}{(k+1)^{1/n}}\\
&= t_k \left(\frac{1}{k^{1/n}}-\frac{1}{(k+1)^{1/n}}\right) + r_{k-1}-\frac{t_{k-1}}{k^{1/n}},
\end{align*}
which is positive by the induction hypothesis.
\end{proof}

\begin{lemma}[\!\!\cite{mussnig_advances}, Lemma 4.3]
\label{le:int_is_a_val_polar}
For a continuous function $\zeta:\R\to[0,\infty)$ with finite moment of order $n-1$, the map
$$u\mapsto \int_{\dom u^*} \zeta\big(u^*(x)\big) \d x$$
defines a non-negative, continuous, $\SLn$ and dually translation invariant valuation on\linebreak$\CVoin$ that is homogeneous of degree $-n$.
\end{lemma}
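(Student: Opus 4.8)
The plan is to obtain this statement as the image of its primal counterpart, Lemma~\ref{le:int_is_a_val}, under the Legendre transform. Write $\tilde{\oZ}(v):=\int_{\dom v}\zeta(v(x))\d x$; by Lemma~\ref{le:int_is_a_val} this is a non-negative, finite, continuous, $\SLn$ and translation invariant valuation on $\CVc$ which is homogeneous of degree $n$. The first step is to record that $u\mapsto u^*$ is a bijection from $\CVoin$ onto $\CVc$. Indeed, if $u\in\CVoin\subseteq\CV$ then $u^*\in\CV$ by Lemma~\ref{le:u_starstar_is_u}, and applying Lemma~\ref{le:conjugate_coercive} to the function $u^*$ we see that $u^*$ is coercive if and only if $0\in\interior\dom u^{**}=\interior\dom u$, which holds; hence $u^*\in\CVc$. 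Conversely, for $v\in\CVc$ the same two lemmas give $v^*\in\CVoin$ and $v^{**}=v$. Thus $\oZ(u)=\tilde{\oZ}(u^*)$ for every $u\in\CVoin$, and in particular $\oZ$ is well defined, finite and non-negative on $\CVoin$.

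Next I would verify the valuation property. Let $u,v\in\CVoin$ be such that $u\vee v,u\wedge v\in\CVoin$. All four functions lie in $\CV$ and $u\wedge v$ is convex, so Lemma~\ref{le:conjugate_is_a_val} gives $(u\wedge v)^*=u^*\vee v^*$ and $(u\vee v)^*=u^*\wedge v^*$. To apply the valuation identity for $\tilde{\oZ}$ I need these two functions to belong to $\CVc$: each is convex, lower semicontinuous and proper as the conjugate of a function in $\CV$ (Lemma~\ref{le:u_starstar_is_u}), and each is coercive because, by Lemma~\ref{le:conjugate_coercive} applied to $(u\wedge v)^*$ and to $(u\vee v)^*$, coercivity of these functions is equivalent to $0\in\interior\dom(u\wedge v)$, respectively $0\in\interior\dom(u\vee v)$, which holds by the standing assumption. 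Applying the valuation property of $\tilde{\oZ}$ to the quadruple $u^*,v^*,u^*\vee v^*,u^*\wedge v^*$ and rewriting via the identities above yields $\oZ(u)+\oZ(v)=\oZ(u\vee v)+\oZ(u\wedge v)$.

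The remaining properties transfer formally. Since $\tilde{\oZ}$ is $\SLn$ and translation invariant on $\CVc$, Lemma~\ref{le:conjugate_transl_sln_inv} shows that $u\mapsto\tilde{\oZ}(u^*)=\oZ(u)$ is $\SLn$ invariant and dually translation invariant on $\CVoin$. For continuity, if $u_k\eto u$ with $u_k,u\in\CVoin$, then $u_k^*\eto u^*$ in $\CVc$ by Wijsman's theorem (Theorem~\ref{thm:wijsman}), so $\oZ(u_k)=\tilde{\oZ}(u_k^*)\to\tilde{\oZ}(u^*)=\oZ(u)$ by continuity of $\tilde{\oZ}$. Finally, from the definition of the conjugate one computes $(u_\lambda)^*=(u^*)_{1/\lambda}$ for $\lambda>0$, whence $\oZ(u_\lambda)=\tilde{\oZ}\big((u^*)_{1/\lambda}\big)=\lambda^{-n}\tilde{\oZ}(u^*)=\lambda^{-n}\oZ(u)$ using that $\tilde{\oZ}$ is homogeneous of degree $n$; thus $\oZ$ is homogeneous of degree $-n$.

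The only point demanding real care is the coercivity bookkeeping inside the valuation step, i.e.\ checking that $u^*\vee v^*$ and $u^*\wedge v^*$ stay in $\CVc$; everything else is a direct consequence of the dictionary between $\CVoin$ and $\CVc$ furnished by Lemmas~\ref{le:u_starstar_is_u}, \ref{le:conjugate_transl_sln_inv}, \ref{le:conjugate_is_a_val}, \ref{le:conjugate_coercive} and Theorem~\ref{thm:wijsman}. In particular, finiteness of the integral comes for free from Lemma~\ref{le:int_is_a_val}; if one wished to see it directly, one would use that $0\in\interior\dom u$ forces $u^*(x)\ge\rho|x|-M$ for suitable $\rho,M>0$, so that $V_n(\{u^*\le t\})=O(t^n)$, and combine this with the coarea-type identity $\int_{\dom u^*}\zeta(u^*(x))\d x=\int\zeta(t)\d V_n(\{u^*\le t\})$ and the finite moment of order $n-1$.
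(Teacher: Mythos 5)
Your proposal is correct, and it is essentially the intended argument: the paper gives no internal proof of this lemma (it is imported as \cite{mussnig_advances}, Lemma 4.3), and the natural proof is exactly your transfer of Lemma~\ref{le:int_is_a_val} through the Legendre transform, using the dictionary of Lemmas~\ref{le:u_starstar_is_u}, \ref{le:conjugate_transl_sln_inv}, \ref{le:conjugate_is_a_val}, \ref{le:conjugate_coercive} and Theorem~\ref{thm:wijsman} just as the paper does later when deducing Theorem~\ref{cor:main_result} from Theorem~\ref{thm:main_result}. One small remark on your optional aside: the bound $V_n(\{u^*\le t\})=O(t^n)$ alone does not control the Stieltjes measure $\d V_n(\{u^*\le t\})$ against a non-monotone $\zeta$; one would also invoke, e.g., the concavity of $t\mapsto V_n(\{u^*\le t\})^{1/n}$ (Brunn--Minkowski) to get a pointwise density bound of order $t^{n-1}$ --- immaterial here, since you correctly take finiteness from Lemma~\ref{le:int_is_a_val}.
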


\begin{lemma}[\!\!\cite{mussnig_advances}, Lemma 4.6]
\label{le:polar_vol_is_a_val}
For a continuous function $\zeta:\R\to\R$ such that $\zeta(t) =0$ for all $t\geq T$ with some $T\in\R$, the map
$$u\mapsto \int_{\dom u^*} \zeta\big(\nabla u^*(x)\cdot x - u^*(x)\big) \d x$$
defines a continuous, $\SLn$ and translation invariant valuation on $\CVcf$ that is homogeneous of degree $-n$.
\end{lemma}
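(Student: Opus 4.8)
\emph{Proof plan.} The plan is to verify the six constituent claims — finiteness of the integral, translation invariance, $\SLn$ invariance, homogeneity of degree $-n$, the valuation property, and continuity — by transferring everything to the conjugate side. The workhorse is the identity
$$\nabla u^*(x)\cdot x - u^*(x) = u\big(\nabla u^*(x)\big)$$
valid for almost every $x\in\dom u^*$: where $u^*$ is differentiable we have $\nabla u^*(x)\in\partial u^*(x)$, and since $u^{**}=u$ (Lemma~\ref{le:u_starstar_is_u}), Lemma~\ref{le:conjugate_subgradients} applied to $u^*$ gives $x\cdot\nabla u^*(x)=u^*(x)+u(\nabla u^*(x))$. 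For finiteness, $u\in\CVcf$ is coercive, so $m:=\min_{x\in\Rn}u(x)$ is finite and $K:=\{u\le T\}$ is a convex body, and $u$, being finite and convex, is locally Lipschitz, hence Lipschitz on a neighbourhood of $K$; thus $\partial u(K)\subseteq LB^n$ for some $L>0$. If the integrand is nonzero at $x$, then $u(\nabla u^*(x))=\nabla u^*(x)\cdot x-u^*(x)<T$, so $\nabla u^*(x)\in K$ and hence $x\in\partial u(\nabla u^*(x))\subseteq\partial u(K)\subseteq LB^n$, while the argument of $\zeta$ lies in the compact interval $[m,T]$. So the integrand is bounded and supported in $LB^n$, and $\oZ(u)$ is finite. (Equivalently, a change of variables turns $\oZ(u)$ into the integral of $\zeta\circ u$ against the Monge--Ampère measure of $u$; this viewpoint also works but needs more external machinery, so I would avoid it.)

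The invariances and homogeneity are direct computations on the conjugate, using $(u\circ\phi^{-1})^*=u^*\circ\phi^T$ for $\phi\in\SLn$, $(u\circ\tau^{-1})^*=u^*+\langle\,\cdot\,,b\rangle$ for the translation $\tau(x)=x+b$, and $(u_\lambda)^*=u^*(\lambda\,\cdot\,)$. In the first case $\dom(u\circ\phi^{-1})^*=\phi^{-T}\dom u^*$ and $\nabla(u\circ\phi^{-1})^*(x)=\phi\,\nabla u^*(\phi^Tx)$, so the argument of $\zeta$ at $x$ equals that of $\oZ(u)$ at $\phi^Tx$, and the substitution $w=\phi^Tx$ (with $|\det\phi|=1$) gives invariance. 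In the second case both the domain and the argument of $\zeta$ are unchanged, and in the third the rescaling of domain and integrand produces the factor $\lambda^{-n}$.

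For the valuation property, let $u,v\in\CVcf$ with $u\wedge v$ convex; then $u\vee v$ is automatically convex, both lie in $\CVcf$, and Lemma~\ref{le:conjugate_is_a_val} gives $(u\vee v)^*=u^*\wedge v^*$ and $(u\wedge v)^*=u^*\vee v^*$. Writing $g_w(x):=\nabla w(x)\cdot x-w(x)$, I would decompose $\Rn$ up to a null set according to the sign of $u^*-v^*$ and the domains $\dom u^*,\dom v^*$, and show that in each piece the four integrands $g_{u^*},g_{v^*},g_{u^*\wedge v^*},g_{u^*\vee v^*}$ (each over its own domain) balance pointwise a.e.: on the open set $\{u^*<v^*\}$ one has $u^*\wedge v^*\equiv u^*$ and $u^*\vee v^*\equiv v^*$ locally, and symmetrically on $\{u^*>v^*\}$; on $\dom u^*\setminus\cl(\dom v^*)$ one has $v^*\equiv+\infty$ locally, so $u^*\wedge v^*\equiv u^*$ there while this set misses $\dom(u^*\vee v^*)$, and symmetrically for the other domain difference; and on $\{u^*=v^*\}$, at a.e.\ point — a density point where both $u^*$ and $v^*$ are differentiable — one has $\nabla u^*=\nabla v^*$, and the sandwich $u^*\wedge v^*\le u^*=v^*\le u^*\vee v^*$ together with the formula for the subdifferential of a maximum forces $u^*\wedge v^*$ and $u^*\vee v^*$ to be differentiable there with the same gradient. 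Integrating the resulting pointwise identity over $\Rn$ gives $\oZ(u)+\oZ(v)=\oZ(u\vee v)+\oZ(u\wedge v)$.

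The hard part is continuity, and that is where I expect the real obstacle. Given $u_k\eto u$ in $\CVcf$, Theorem~\ref{thm:wijsman} gives $u_k^*\eto u^*$, hence (Theorem~\ref{thm:epi_conv_is_nice}) uniform convergence of $u_k^*$ to $u^*$ on compact subsets of $\interior\dom u^*$; since for convex functions uniform convergence on a neighbourhood of a point of differentiability of the limit forces convergence of the gradients there, we get $\nabla u_k^*(x)\to\nabla u^*(x)$ and $u_k^*(x)\to u^*(x)$, hence $g_{u_k^*}(x)\to g_{u^*}(x)$, for a.e.\ $x\in\dom u^*$. Uniform control of supports and integrands is obtained as in the finiteness step: $u_k\to u$ locally uniformly makes the $u_k$ uniformly bounded, hence uniformly Lipschitz, on a fixed neighbourhood of $\{u\le T\}+B^n$, and by Lemma~\ref{le:hd_conv_lvl_sets} we have $\{u_k\le T\}\subseteq\{u\le T\}+B^n$ for large $k$; therefore the integrands of the $\oZ(u_k)$ are bounded by a fixed constant (note $\min u_k\to\min u$) and supported in a fixed ball, and moreover they vanish at $x$ for all large $k$ whenever $x\notin\dom u^*$ — if the integrand of $\oZ(u_k)$ is nonzero at $x$ for infinitely many $k$, then $x\in\partial u_k(y_k)$ with $y_k=\nabla u_k^*(x)$ in the compact set $\{u\le T\}+B^n$, so a subsequential limit (using closedness of subdifferential graphs under epi-convergence) gives $x\in\partial u(y)$, i.e.\ $x\in\dom u^*$. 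Dominated convergence then yields $\oZ(u_k)\to\oZ(u)$. The delicate points are precisely this compactness argument controlling the behaviour on the set where $\dom u_k^*$ overshoots $\dom u^*$, together with the uniform support and bound estimates — these, rather than the algebraic identities, are where the work lies.
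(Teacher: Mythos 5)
The paper itself contains no proof of this lemma; it is imported verbatim from \cite{mussnig_advances}, Lemma 4.6, so there is no in-paper argument to measure you against. Judged on its own merits, your plan is correct and is in the same spirit as the cited source: everything is transferred to the conjugate side through the identity $\nabla u^*(x)\cdot x-u^*(x)=u\big(\nabla u^*(x)\big)$ (Lemma~\ref{le:u_starstar_is_u} and Lemma~\ref{le:conjugate_subgradients}), the support and boundedness of the integrand are controlled via the sublevel set $\{u\le T\}$ and local Lipschitz bounds, the invariances and the degree $-n$ homogeneity are the elementary conjugation formulas you state, the valuation property follows from Lemma~\ref{le:conjugate_is_a_val} plus the coincidence-set decomposition (using that $\partial\dom u^*$ and $\partial\dom v^*$ are Lebesgue null and that gradients of $u^*,v^*$ agree a.e.\ on $\{u^*=v^*\}$), and continuity comes from Theorem~\ref{thm:wijsman}, gradient convergence for convex functions, and dominated convergence.

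Two details in the continuity step should be made explicit, though neither is a genuine gap. First, Lemma~\ref{le:hd_conv_lvl_sets} gives Hausdorff convergence of $\{u_k\le T\}$ only for levels $T\neq\min u$, and $\{u\le T\}$ may be empty; both cases are harmless (replace $T$ by any $T'>\max\{T,\min u\}$, for which $\zeta$ still vanishes on $[T',\infty)$, and use $\min u_k\to\min u$, which follows from the same lemma), but you should say so. Second, for the a.e.\ convergence of the integrands on $\interior\dom u^*$ you need that every compact ball $B\subset\interior\dom u^*$ satisfies $B\subset\interior\dom u_k^*$ for all large $k$, so that $\nabla u_k^*$ exists a.e.\ on $B$ and the convexity result on convergence of gradients applies; this follows from Theorem~\ref{thm:epi_conv_is_nice}(c) applied to $u_k^*\eto u^*$, since uniform convergence on $B$ forces finiteness there. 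Your treatment of points outside $\dom u^*$ is fine and does not even need Attouch's theorem: the subgradient inequality $u_k(z)\ge u_k(y_k)+x\cdot(z-y_k)$ passes to the limit using $y_k\to y$ in a fixed compact set and local uniform convergence of $u_k$, giving $x\in\partial u(y)$ and hence $x\in\dom u^*$.
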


\begin{lemma}[\!\!\cite{mussnig_advances}, Lemma 4.12]
\label{le:polar_vol_is_a_val_polar}
For a continuous function $\zeta:\R\to\R$ such that $\zeta(t)=0$ for all $t\geq T$ with some $T\in\R$, the map
$$u\mapsto \int_{\dom u} \zeta\big(\nabla u(x) \cdot x - u(x)\big)\d x$$
defines a continuous, $\SLn$ and dually translation invariant valuation on \linebreak$\CVs \cap \CVoin$ that is homogeneous of degree $n$.
\end{lemma}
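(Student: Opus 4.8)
The plan is to derive this statement from Lemma~\ref{le:polar_vol_is_a_val} by conjugating with the Legendre transform. Set
\begin{equation*}
\oZ(w):=\int_{\dom w^*}\zeta\big(\nabla w^*(x)\cdot x-w^*(x)\big)\,\d x,\qquad w\in\CVcf,
\end{equation*}
which by Lemma~\ref{le:polar_vol_is_a_val} is a continuous, $\SLn$ and translation invariant valuation on $\CVcf$ that is homogeneous of degree $-n$. First I would check that $w\mapsto w^*$ restricts to a bijection from $\CVcf$ onto $\CVs\cap\CVoin$: if $u\in\CVs\cap\CVoin$, then super-coercivity of $u$ gives $\dom u^*=\Rn$ by Lemma~\ref{le:conjugate_coercive}, so $u^*$ is real-valued, while $0\in\interior\dom u$ gives, again by Lemma~\ref{le:conjugate_coercive}, that $u^*$ is coercive; hence $u^*\in\CVcf$, and $u^{**}=u$ by Lemma~\ref{le:u_starstar_is_u}. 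The same two equivalences show $w^*\in\CVs\cap\CVoin$ for every $w\in\CVcf$.

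Next I would verify that $\tilde\oZ(u):=\oZ(u^*)$, defined for $u\in\CVs\cap\CVoin$, has the four asserted properties. Continuity follows from Wijsman's Theorem~\ref{thm:wijsman}, which gives $u_k^*\eto u^*$ whenever $u_k\eto u$, combined with the continuity of $\oZ$ on $\CVcf$ (noting that $u_k^*,u^*\in\CVcf$ by the first step). The $\SLn$ and dual translation invariance of $\tilde\oZ$ are exactly Lemma~\ref{le:conjugate_transl_sln_inv}. For the valuation property, let $u,v\in\CVs\cap\CVoin$ with $u\mx v,u\mn v\in\CVs\cap\CVoin$; by Lemma~\ref{le:conjugate_is_a_val} we have $(u\mn v)^*=u^*\mx v^*$ and $(u\mx v)^*=u^*\mn v^*$, all of which belong to $\CVcf$ by the first step, so the valuation property of $\oZ$ on $\CVcf$ yields
\begin{equation*}
\tilde\oZ(u)+\tilde\oZ(v)=\oZ(u^*)+\oZ(v^*)=\oZ(u^*\mx v^*)+\oZ(u^*\mn v^*)=\tilde\oZ(u\mn v)+\tilde\oZ(u\mx v).
\end{equation*}
For the homogeneity, a direct computation with the definition of the conjugate gives $(u_\lambda)^*=(u^*)_{1/\lambda}$, hence $\tilde\oZ(u_\lambda)=\oZ((u^*)_{1/\lambda})=\lambda^{n}\oZ(u^*)=\lambda^n\tilde\oZ(u)$ because $\oZ$ is homogeneous of degree $-n$.

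It remains to identify $\tilde\oZ$ with the operator in the statement. Since $u^{**}=u$ as functions by Lemma~\ref{le:u_starstar_is_u}, we have $\dom(u^*)^*=\dom u$ and $(u^*)^*=u$, and $\nabla(u^*)^*=\nabla u$ wherever the gradient exists; therefore
\begin{equation*}
\tilde\oZ(u)=\oZ(u^*)=\int_{\dom u}\zeta\big(\nabla u(x)\cdot x-u(x)\big)\,\d x
\end{equation*}
for every $u\in\CVs\cap\CVoin$, which is the claim. I do not expect a genuine obstacle here: the whole argument is a transfer along the Legendre transform, and the only points demanding care are the exact matching of $\CVs\cap\CVoin$ with $\CVcf$ under conjugation (supplied by Lemma~\ref{le:conjugate_coercive}) and the bookkeeping of the sign of the homogeneity degree. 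In particular, finiteness, measurability and the valuation estimates for the integral are inherited directly from Lemma~\ref{le:polar_vol_is_a_val} applied to $u^*\in\CVcf$, so nothing new needs to be proved about the integral itself.
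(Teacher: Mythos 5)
Your proposal is correct: the bijectivity of conjugation between $\CVcf$ and $\CVs\cap\CVoin$ (via Lemma~\ref{le:conjugate_coercive} and Lemma~\ref{le:u_starstar_is_u}), the transfer of continuity, invariances, the valuation property and the sign flip of the homogeneity degree (via Theorem~\ref{thm:wijsman}, Lemma~\ref{le:conjugate_transl_sln_inv}, Lemma~\ref{le:conjugate_is_a_val}) all check out. The paper itself does not reprove this lemma but quotes it from the reference; your derivation from Lemma~\ref{le:polar_vol_is_a_val} by conjugating with the Legendre transform is precisely the dualization mechanism the paper uses elsewhere (e.g.\! in passing from Theorem~\ref{thm:main_result} to Theorem~\ref{cor:main_result}), so it is essentially the same approach.
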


\section{Super-Coercive Approximations}

The main idea of the proof of Theorem~\ref{thm:main_result} is to utilize a sequence of real-valued functions that can be used to embed $\CVc$ into $\CVs$. We will define and study this sequence in the following.

For $k\in\N$ let $\sumf_k=\sum_{i=1}^k i!$ be the sum of the first $k$ factorials and let $g_k:\R\to\R$ be defined as
$$g_k(r)=\begin{cases}
r,\quad & r\leq \sumf_k\\
\sumf_k + (k+1) (r-\sumf_k),\quad & \sumf_k < r \leq \sumf_k+k!\\
\sumf_{k+j} + (k+j+1)(r-\sumf_{k+j-1}-k!),\quad & \sumf_{k+j-1}+k!< r\leq \sumf_{k+j}+k!,\;j\in \N
\end{cases}
$$
or equivalently
$$g_k(r)=\begin{cases}
r,\quad & r\leq \sumf_k\\
\sumf_{k+j} + (k+j+1)(r-\sumf_{k+j-1}- k!),\quad & \sumf_{k+j-1} + k!< r\leq \sumf_{k+j} + k!,\;j\in\N_0.
\end{cases}
$$
We will need the following properties of the sequence $g_k$.

\begin{lemma}
\label{le:g_k_properties}
For the sequence $g_k:\R\to\R$, the following properties hold true for every $k\in\N$:
\begin{enumerate}[(i)]
	\item $g_k(\sumf_{k+j-1}+ k!)=\sumf_{k+j}$ for every $j\in \N_0$.
	\item $g_k$ is continuous.
	\item\label{it:strictly_incr} $g_k$ is strictly increasing and strictly convex.
	\item $g_k(r)\to r$ as $k\to +\infty$ for every $r\in\R$.
	\item If $u\in\CVc$,  then $g_k(u)\in\CVs$ and furthermore, if $u\in\CVcf$, then $g_k(u)\in\CVsf$.
	\item $g_k(u(x))\geq u(x)$ for every $u\in\CVc$ and $x\in\Rn$.
	\item\label{it:lvl_set} For $u\in\CVc$ and $s\leq \sumf_{k}$ we have $\{g_k(u)\leq s\}=\{u\leq s\}$ and for $\sumf_{k+j-1} +k!< s \leq \sumf_{k+j} + k!$, $j\in\N_0$ we have $\{g_k(u)\leq s\} = \left\{u \leq \frac{s-\sumf_{k+j}}{k+j+1} + \sumf_{k+j-1}+ k!\right\}$.
	\item $g_k(u)\eto u$ as $k\to+\infty$ for every $u\in\CVc$.
	
	\item For every translation $\tau$ on $\Rn$, $\phi\in\SLn$ and $u\in\CVc$, $g_k\circ(u\circ \tau^{-1})=(g_k\circ u)\circ \tau^{-1}$ and $g_k\circ(u\circ \phi^{-1})=(g_k\circ u)\circ \phi^{-1}$.
	
	\item $g_k(u\mx v) = g_k(u)\mx g_k(v)$ and $g_k(u\mn v)=g_k(u)\mn g_k(v)$ for every $u,v\in\CVc$.
	
	\item If $u_j\eto u$ in $\CVc$, then $g_k(u_j)\eto g_k(u)$ as $j\to +\infty$.
\end{enumerate}
\end{lemma}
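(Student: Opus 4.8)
The plan is to verify the eleven properties more or less in the order given, since each one feeds into the next. Property (i) is a direct computation: plugging $r = \sumf_{k+j-1} + k!$ into the last branch of $g_k$ gives $\sumf_{k+j} + (k+j+1)\cdot 0 = \sumf_{k+j}$, and one checks that for $j=0$ this is consistent with the middle branch (using $\sumf_{k-1}+k! = \sumf_k$, since $\sumf_k - \sumf_{k-1} = k!$). For (ii), continuity, I would check that the one-sided limits match at each breakpoint $r = \sumf_k$ and $r = \sumf_{k+j} + k!$: at $r=\sumf_k$ both expressions give $\sumf_k$; at the upper end of the $j$-th branch, $g_k(r) \to \sumf_{k+j} + (k+j+1)k! = \sumf_{k+j} + (k+j+1)! - \cdots$; here I need the identity $\sumf_{k+j+1} = \sumf_{k+j} + (k+j+1)!$ to see this equals the value the $(j{+}1)$-st branch assigns at its left endpoint $\sumf_{k+j}+k!$. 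Actually the cleaner bookkeeping is: each branch is affine with slope $k+j+1 \geq 1$, the slopes are nondecreasing in $j$, and (i) already pins down the values at the breakpoints, so continuity plus (iii) (strictly increasing, strictly convex) follow at once — strict monotonicity because all slopes are positive, strict convexity because the slopes strictly increase across consecutive breakpoints and $g_k$ is piecewise affine. For (iv), fix $r \in \R$; for $k$ large enough that $\sumf_k > r$ we are in the first branch and $g_k(r) = r$ exactly, so the convergence is eventually stationary.

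Properties (v)–(vii) are the geometric heart. For (vii), the level set formula: since $g_k$ is a strictly increasing bijection of $\R$ onto $\R$ (by (iii) and surjectivity, which follows from continuity and the fact that $g_k(r)\to\pm\infty$), we have $\{g_k(u) \le s\} = \{u \le g_k^{-1}(s)\}$, and then I just invert each affine branch: for $s \le \sumf_k$, $g_k^{-1}(s) = s$; for $\sumf_{k+j-1}+k! < s \le \sumf_{k+j}+k!$ — wait, I should be careful and instead describe the range of $s$ in terms of where $g_k^{-1}(s)$ lands; solving $s = \sumf_{k+j} + (k+j+1)(r - \sumf_{k+j-1} - k!)$ for $r$ gives exactly the stated formula. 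For (v): if $u \in \CVc$ then $u$ is convex and coercive; $g_k$ is convex and increasing by (iii), so $g_k \circ u$ is convex, and since $g_k(r) \ge r$ for all $r$ (which is (vi), proved by noting $g_k(r)=r$ on the first branch and then each subsequent branch has slope $>1$ so stays above the diagonal — formally by induction on $j$ using (i)), coercivity of $u$ forces coercivity of $g_k(u)$; super-coercivity is the key extra point — the slopes $k+j+1$ are unbounded, so $g_k(u)(x)/|x| \to \infty$ whenever $u$ is coercive, hence $g_k(u) \in \CVs$, and it takes only finite values when $u$ does. For (viii), epi-convergence $g_k(u) \eto u$: using Lemma 2.3 (Hausdorff convergence of sublevel sets characterizes epi-convergence in $\CVc$), fix $t \in \R$; for $k$ large, $t \le \sumf_k$, so by (vii) $\{g_k(u) \le t\} = \{u \le t\}$ exactly — again eventually stationary, so trivially Hausdorff-convergent, and (viii) follows. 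Alternatively, combine (iv) with the remark after Theorem 2.2 that pointwise convergence equals epi-convergence on $\CVcf$, but the level-set argument works on all of $\CVc$.

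Properties (ix)–(xi) are formal consequences of $g_k$ being a fixed increasing bijection of $\R$. For (ix), $g_k \circ (u \circ \tau^{-1}) = (g_k \circ u) \circ \tau^{-1}$ is just associativity of composition (and likewise for $\phi$); nothing about the specific form of $g_k$ is used. For (x), $g_k(u \mx v) = g_k(u) \mx g_k(v)$ holds pointwise because $g_k$ is increasing: $g_k(\max(a,b)) = \max(g_k(a),g_k(b))$, and similarly for $\min$. For (xi), if $u_j \eto u$ in $\CVc$, then by Lemma 2.3 the sublevel sets converge in the Hausdorff metric for all but one value of $t$; by (vii) the sublevel sets of $g_k(u_j)$ are sublevel sets of $u_j$ at a continuously reparametrized height (the reparametrization $s \mapsto \frac{s-\sumf_{k+j'}}{k+j'+1}+\sumf_{k+j'-1}+k!$ being continuous and increasing), so they converge Hausdorff at all but countably many heights, and the second half of Lemma 2.3 (existence of heights $t_k \to t$ with converging level sets) gives $g_k(u_j) \eto g_k(u)$; one must take a tiny bit of care at the finitely many breakpoints of $g_k$, but these form a discrete set so they can be avoided.

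The main obstacle I expect is purely the bookkeeping in (i)–(iii): getting the index shifts in $\sumf$ consistent across the three branches, in particular verifying the telescoping identities $\sumf_{k+j} - \sumf_{k+j-1} = (k+j)!$ and checking that the middle branch of the first (equivalent) display is genuinely the $j=0$ case of the general branch. Once the piecewise-affine structure — positive, strictly increasing slopes $k+j+1$ with the breakpoint values fixed by (i) — is nailed down, everything downstream ((iv)–(xi)) follows from soft arguments: $g_k$ is an increasing bijection of $\R$ agreeing with the identity on an exhausting sequence of intervals, and composition with such a map is continuous on $\CVc$, commutes with affine coordinate changes, commutes with $\mx$ and $\mn$, and preserves (super-)coercivity. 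So the real content is the first three items plus the super-coercivity observation in (v) (unbounded slopes) and the eventual-stationarity observations in (iv), (vi), (viii).
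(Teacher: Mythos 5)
Your proposal is correct and follows essentially the same route as the paper: direct verification of the piecewise-affine structure for (i)--(iv) and (vi), convexity/monotonicity of the composition plus the unbounded slopes for (v), the level-set identity (vii) obtained by inverting $g_k$, and then Lemma~\ref{le:hd_conv_lvl_sets} applied to the (eventually stationary, respectively reparametrized) sublevel sets for (viii) and (xi), with (ix) and (x) being formal consequences of monotonicity. The paper's proof is only terser at these last steps, so no substantive difference.
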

\begin{proof}
\begin{enumerate}[(i)]
	\item This follows directly from the definition of $g_k$ since
	\begin{align*}
		g_k(\sumf_{k+j-1} + k!) &= \sumf_{k+j-1} + (k+j)(\sumf_{k+j-1}+ k! - \sumf_{k+j-2}-k!)\\
		&= \sumf_{k+j-1} + (k+j)((k+j-1)!)\\
		&= \sumf_{k+j-1} + (k+j)!\\
		&= \sumf_{k+j}
	\end{align*}
	
	\item Since $\sum_{i=0}^{\infty} i! = \infty$, it follows that for every $k\in\N$ and $r\in\R$ either $r\leq \sumf_k + k!$ or there exists $j\in\N$ such that $\sumf_{k+j-1} + k! < r \leq \sumf_{k+j} + k!$.
Furthermore, it is easy to check that for $j\in\N$
\begin{equation*}
\lim_{r\to \sumf_{k+j-1} + k!} g_k(r) = \sumf_{k+j}.
\end{equation*}
Hence, $g_k$ is continuous since $g_k(\sumf_{k+j-1} + k!) = \sumf_{k+j}$.

	\item This is easy to see, since $g_k$ is a continuous, piecewise linear function with positive and increasing slope.
	
	\item This property is immediate, since for every $r\in\R$ there exists $k_0\in\N$ such that $r \leq \sumf_k$ for every $k\geq k_0$ and therefore $g_k(r)=r$.
	
	\item Since $g_k$ is increasing and convex,
\begin{align*}
(g_k\circ u)(\lambda x + (1-\lambda)y) &\leq g_k(\lambda u(x)+(1-\lambda) u(y))\\
&\leq \lambda g_k(u(x)) + (1-\lambda)g_k(u(y)),
\end{align*}
for every $x,y\in\Rn$ and $0\leq \lambda \leq 1$, which shows that $g_k(u)$ is a convex function. Furthermore, since $\lim_{r\to+\infty} \frac{g_k(r)}{r}=+\infty$ for every $k\in\N$, the function $g_k(u)$ is super-coercive. The claim now follows, since $\dom g_k(u)=\dom u$.

	\item This can be easily seen, since $g_k(r)=r$ for every $r\leq \sumf_k$ and $g_k$ is a strictly increasing, convex function.
	
	\item This follows directly from the definition of $g_k$.
	
	\item This follows from the last property together with Lemma \ref{le:hd_conv_lvl_sets}.
	
	\item This is immediate.
	
	\item This is a direct consequence of the monotonicity of $g_k$.
	
	\item This follows from (\ref{it:lvl_set}) together with Lemma \ref{le:hd_conv_lvl_sets}.
\end{enumerate}
\vspace{-1.2em}
\end{proof}

By property (\ref{it:strictly_incr}) of the last Lemma the function $g_k$ is strictly increasing. Hence, there exists an inverse function $g_k^{-1}:\R\to\R$. Furthermore, the particular choice of $g_k$ allows us to construct a function $v_l^t$ as in the next lemma. 

\begin{lemma}
\label{le:g_k_inv_on_v_l_t}
For every $t\in\R$ there exists a sequence of functions $v_l^t:[0,\infty)\to\R$, $l\in\N$ such that the functions $x\mapsto g_k^{-1}(v_l^t(|x|))$ and $x\mapsto v_l^t(|x|)$ are convex, super-coercive and finite on $\Rn$ for every $k\in\N$. Furthermore
$$\elim_{l\to\infty} g_k^{-1}(v_l^t(|\cdot|)) = \Ind_{B^n}+g_k^{-1}(t).$$
\end{lemma}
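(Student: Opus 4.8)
The plan is to approximate $\Ind_{B^n}+g_k^{-1}(t)$ --- the convex function that equals $g_k^{-1}(t)$ on the closed Euclidean unit ball $B^n$ and $+\infty$ outside --- by finite, super-coercive, radial convex functions whose images under $g_k^{-1}$ remain convex \emph{for every $k$ simultaneously}. Recall (cf.\ Lemma~\ref{le:g_k_properties}) that $g_k^{-1}\colon\R\to\R$ is a concave, strictly increasing, piecewise linear function that equals the identity on $(-\infty,\sumf_k]$ and has slope $1/(m+1)$ on $[\sumf_m,\sumf_{m+1}]$ for every $m\geq k$. The key idea is to make $v_l^t$ attain the partial factorial sums $\sumf_m$ exactly at its breakpoints, so that composition with $g_k^{-1}$ preserves piecewise linearity. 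Concretely, fix $m_0=m_0(t)\in\N$ minimal with $\sumf_{m_0}>t$ and a constant $C_t\geq\max\{1,\sumf_{m_0}-t\}$ depending only on $t$, and let $v_l^t\colon[0,\infty)\to\R$ be the continuous, piecewise linear function that is constantly $t$ on $[0,1]$, increases linearly from $t$ to $\sumf_{m_0}$ on $[1,1+C_t/l]$, and increases linearly from $\sumf_{m_0+j-1}$ to $\sumf_{m_0+j}$ on $[1+(C_t+j-1)/l,\,1+(C_t+j)/l]$ for each $j\in\N$. Since $\sumf_m\to\infty$ and $\sum_{i\geq1}i!=\infty$, the breakpoints tend to $+\infty$, so $v_l^t$ is finite on all of $[0,\infty)$.

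Next I would verify the properties. The consecutive slopes of $v_l^t$ are $0$, then $(\sumf_{m_0}-t)l/C_t$, then $(m_0+1)!\,l,(m_0+2)!\,l,\dots$, a non-decreasing positive sequence (the first step holds since $C_t\geq\sumf_{m_0}-t$), so $v_l^t$ is convex, non-decreasing and of unbounded slope; hence $x\mapsto v_l^t(|x|)$ is a finite, convex, super-coercive function on $\Rn$. For the composition, each linear piece of $v_l^t$ is mapped into a single linear piece of $g_k^{-1}$ (for the first rising piece this uses that $g_k^{-1}$ is affine on $[t,\sumf_{m_0}]$, which follows from the minimality of $m_0$), so $g_k^{-1}\circ v_l^t$ is again continuous and piecewise linear, with no new breakpoints; one computes that its slopes are $0$ on $[0,1]$; $(\sumf_{m_0}-t)l/C_t$ if $k\geq m_0$ and $(\sumf_{m_0}-t)l/(m_0 C_t)$ if $k\leq m_0-1$ on $[1,1+C_t/l]$; and, on the $j$-th rising piece, $(m_0+j)!\,l$ if $k\geq m_0+j$ and $(m_0+j-1)!\,l$ if $k\leq m_0+j-1$. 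For each fixed $k$ this sequence is non-decreasing --- the two regimes agree at the crossover index $j=k-m_0+1$, where both equal $k!\,l$, and the step from the initial rising piece to the piece $j=1$ is again guaranteed by the choice of $C_t$ --- and it tends to $+\infty$. Therefore $g_k^{-1}\circ v_l^t$ is convex, non-decreasing and of unbounded slope, so $x\mapsto g_k^{-1}(v_l^t(|x|))$ is a finite, convex, super-coercive function on $\Rn$, for every $k$.

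It then remains to compute the epi-limit. If $|x|\leq1$, then $g_k^{-1}(v_l^t(|x|))=g_k^{-1}(t)$ for every $l$; if $|x|>1$, then for every $N\in\N$ we have $1+(C_t+N)/l<|x|$ once $l$ is large, so $v_l^t(|x|)\geq\sumf_{m_0+N}$, whence $v_l^t(|x|)\to\infty$ and $g_k^{-1}(v_l^t(|x|))\to\infty$ as $l\to\infty$. Thus $g_k^{-1}(v_l^t(|\cdot|))$ converges pointwise on all of $\Rn$ to $\Ind_{B^n}+g_k^{-1}(t)$, which is convex and lower semicontinuous with domain $B^n$ of nonempty interior; by Theorem~\ref{thm:epi_conv_is_nice} (or, alternatively, by checking Hausdorff convergence of the sublevel sets and invoking Lemma~\ref{le:hd_conv_lvl_sets}) pointwise convergence here coincides with epi-convergence, which gives $\elim_{l\to\infty}g_k^{-1}(v_l^t(|\cdot|))=\Ind_{B^n}+g_k^{-1}(t)$.

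The hard part is the middle step --- arranging a \emph{single} radial profile $v_l^t$ whose composition with $g_k^{-1}$ is convex for \emph{all} $k\in\N$ at once. This is exactly what the peculiar construction of $g_k$, with breakpoints at the partial factorial sums and consecutive integer slopes, is built to allow: once $v_l^t$ takes the values $\sumf_m$ at its breakpoints, the composition $g_k^{-1}\circ v_l^t$ becomes piecewise linear with slopes of the shape $m!\,l$ or $(m+1)!\,l$, which are monotone automatically; the only genuine work is the slope bookkeeping at the crossover index and on the first rising piece, which is what fixes the constant $C_t$.
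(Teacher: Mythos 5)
Your proposal is correct and follows essentially the same route as the paper's appendix construction: a radial, piecewise linear profile that is constant equal to $t$ on $[0,1]$ and then climbs through the partial factorial sums $\sumf_m$ on intervals of length $1/l$, so that composition with $g_k^{-1}$ stays piecewise linear with the same breakpoints and convexity for every $k$ follows from the slope bookkeeping, with the epi-limit identified via pointwise/level-set convergence. The only cosmetic difference is that the paper normalizes the first climbing segment to have slope exactly $l$ (interval length $(\sumf_{m_t}-t)/l$) and then writes out $g_k^{-1}\circ v_l^t$ explicitly in the two cases $t\leq\sumf_k$ and $t>\sumf_k$, whereas you fix the first interval length $C_t/l$ and argue uniformly in $k$ through the slope comparison at the crossover.
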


The construction of $v_l^t$ together with the proof of Lemma~\ref{le:g_k_inv_on_v_l_t} can be found in the Appendix.

\section{Classification of Valuations on $\CVsf$}

The basic idea of the proof of our main result is to embed $\CVc$ into $\CVsf$ by using the sequence $g_k$ that was introduced in the last section and applying Theorem~\ref{thm:class_advances}.

\begin{lemma}
\label{le:special_construction}
Let $(t_k)_{k\in\N}$ and $(b_k)_{k\in\N}$ be strictly monotone sequences of real numbers such that $b_k> 0$ for all $k\in\N$ and $\lim_{k\to\infty} t_k = \lim_{k\to\infty} b_k = \infty$. Furthermore, let $v:[0,\infty)\to\R$ be the piecewise linear function such that $v(0)=t_1$ and
$$v'(r)=b_k$$
if $r>0$ is such that $t_k < v(r) < t_{k+1}$. If $u\in\CVs$ is defined by $u(x)=v(|x|)$ then
$$\nabla u^*(x)\cdot x - u^*(x) = \begin{cases} t_1\quad & \text{for a.e.\! } x \text{ s.t. } |x| < b_1\\
t_k\quad & \text{for a.e.\! } x \text{ s.t. } b_{k-1} < |x| < b_k,\;k\geq 2.\end{cases}$$
\end{lemma}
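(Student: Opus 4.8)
The plan is to read off $\nabla u^*$ from the subdifferential of the radial function $u$ and then invoke the Fenchel equality of Lemma~\ref{le:conjugate_subgradients}. First I would record the structure of $u$: since $(t_k)$ and $(b_k)$ tend to $\infty$ they are strictly increasing, so the slopes $b_k$ of $v$ increase strictly, $v$ is strictly increasing and strictly convex, and $u(x)=v(|x|)$ is convex, lower semicontinuous, proper and super-coercive; hence $u\in\CVs$ and, by Lemma~\ref{le:conjugate_coercive}, $\dom u^*=\Rn$, so $u^*$ is finite on $\Rn$ and differentiable a.e.\ by Rademacher's theorem. I would also fix, for each $k\in\N$, the unique radius $r_k\ge 0$ with $v(r_k)=t_k$; then $r_1=0$, $r_{k+1}=r_k+(t_{k+1}-t_k)/b_k$, and $v$ has constant slope $b_k$ on $(r_k,r_{k+1})$. (Note $v$ may blow up at a finite radius if $\sum(t_{k+1}-t_k)/b_k<\infty$, so $u$ may take the value $+\infty$; this is harmless, since $u\in\CVs$ is allowed and $\dom u^*=\Rn$ in any case.)

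The key reduction is the identity $\nabla u^*(y)\cdot y-u^*(y)=u\big(\nabla u^*(y)\big)$, valid for a.e.\ $y\in\Rn$: whenever $u^*$ is differentiable at $y$ one has $\nabla u^*(y)\in\partial u^*(y)$, so Lemma~\ref{le:conjugate_subgradients} applied with $x=\nabla u^*(y)$ gives $x\cdot y=u(x)+u^*(y)$, and rearranging yields the identity. Thus everything reduces to identifying the unique $x$ with $y\in\partial u(x)$ (which, again by Lemma~\ref{le:conjugate_subgradients}, equals $\nabla u^*(y)$) and evaluating $u$ there.

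For this I would compute $\partial u$ directly. Convexity of $v$ with right derivative $b_1$ at $0$ gives $v(r)\ge t_1+b_1 r$ for all $r\ge 0$, hence $\partial u(0)=\{y\in\Rn:|y|\le b_1\}$; and for $x\neq 0$, $\partial u(x)=\{\,s\,x/|x| : s\in\partial v(|x|)\,\}$, where $\partial v(r)=\{b_k\}$ for $r\in(r_k,r_{k+1})$ and $\partial v(r_k)=[b_{k-1},b_k]$ for $k\ge 2$. Inverting this relation: if $|y|<b_1$, then $y\in\partial u(x)$ only for $x=0$ (every element of $\partial u(x)$ has norm at least $b_1$ when $x\neq 0$), so $\nabla u^*(y)=0$ and $u(\nabla u^*(y))=u(0)=t_1$; if $b_{k-1}<|y|<b_k$ for some $k\ge 2$, then $x:=r_k\,y/|y|$ satisfies $y\in\partial u(x)$ because $|y|\in(b_{k-1},b_k)\subseteq\partial v(r_k)$ and $x$ points in the direction of $y$, and this $x$ is the only such point — any competitor must be a positive multiple $\rho\,y/|y|$ of $y$ with $|y|\in\partial v(\rho)$, and since $|y|$ differs from every $b_j$ while $\partial v$ takes only the values $\{b_j\}$ and $[b_{j-1},b_j]$, this forces $\rho=r_k$. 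Consequently $\nabla u^*(y)=r_k\,y/|y|$ and $u(\nabla u^*(y))=v(r_k)=t_k$. Since the spheres $\{|y|=b_k\}$ are Lebesgue null, combining this with the identity above gives the asserted piecewise-constant formula.

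I expect the main obstacle to be the precise description of $\partial u$ at the origin and at the kink radii $r_k$, together with the uniqueness of the $\partial u$-preimage of a given $y$; once that bookkeeping is carried out, the conclusion is immediate from Lemma~\ref{le:conjugate_subgradients}.
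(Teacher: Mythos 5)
Your argument is correct and follows essentially the same route as the paper's proof: reduce via Lemma~\ref{le:conjugate_subgradients} to the identity $\nabla u^*(x)\cdot x - u^*(x) = u\big(\nabla u^*(x)\big)$ together with $x\in\partial u\big(\nabla u^*(x)\big)$, and then identify the $\partial u$-preimage of $x$ from the radial, piecewise linear structure of $v$. You simply carry out the subdifferential bookkeeping (at the origin and at the kink radii) more explicitly than the paper, which asserts the conclusion of that step directly.
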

\begin{proof}
Since $u^*$ is a convex function, it is differentiable a.e.\! on the interior of its domain and since $u$ is super-coercive, it follows from Lemma~\ref{le:conjugate_coercive} that $\dom u^*=\Rn$. Hence, w.l.o.g.\! let $x\in\Rn$ be such that $\nabla u^*(x)$ exists. By Lemma~\ref{le:conjugate_subgradients}
$$\nabla u^*(x)\cdot x - u^*(x) = u(\nabla u^*(x))$$
and furthermore $x \in \partial u (\nabla u^*(x))$. If $x$ is such that for some $k\geq 2$, $b_{k-1}<|x| < b_k$, then this can be only the case if $u(\nabla u^*(x)) = t_k$ and if $|x|<b_1$, then this can only be the case if $\nabla u^*(x)=0$ and $u(\nabla u^*(x))=t_1$.
\end{proof}

\begin{lemma}
\label{le:growth_function_sequences}
For $n\geq 2$, let $\oZ:\CVsf\to[0,\infty)$ be a continuous, $\SLn$ and translation invariant valuation. For $k\in\N$ there exist continuous functions $\zeta_0^k,\zeta_1^k,\zeta_2^k:\R\to[0,\infty)$ such that $\zeta_1^k$ has finite moment of order $n-1$ and $\zeta_2^k(t)=0$ for every $t\geq T_k$ with some $T_k\in\R$ such that
$$\oZ\big(g_k(u)\big) = \zeta_0^k\big(\min\nolimits_{x\in\Rn} u(x)\big) + \int_{\Rn} \zeta_1^k\big(u(x)\big) \d x + \int_{\dom u^*} \zeta_2^k\big(\nabla u^*(x) \cdot x - u^*(x)\big) \d x$$
for every $u\in\CVcf$. Furthermore, the limits
$$\lim_{k\to\infty} \int_{\Rn} \zeta_1^k\big(u(x)\big) \d x\quad \text{and} \quad \lim_{k\to\infty} \int_{\Rn} \zeta_2^k\big(\nabla u^*(x) \cdot x - u^*(x)\big) \d x
$$
exist and are finite for every $u\in\CVsf$. Moreover,
\begin{eqnarray*}
\zeta_0^k\big(\min\nolimits_{x\in\Rn} u(x)\big) &=& \zeta_0\big(\min\nolimits_{x\in\Rn} g_k(u(x))\big)\\
\int_{\Rn} \zeta_1^k\big(u(x)\big) \d x &=& \lim_{m\to\infty} \int_{\Rn} \zeta_1^m\big(g_k(u(x))\big) \d x\\
\int_{\dom u^*} \zeta_2^k\big(\nabla u^*(x) \cdot x - u^*(x)\big) \d x &=& \lim_{m\to\infty} \int_{\Rn} \zeta_2^m\big(\nabla (g_k \circ u)^*(x) \cdot x - (g_k \circ u)^*(x)\big) \d x
\end{eqnarray*}
for every $k\in\N$ and $u\in\CVcf$, where $\zeta_0:\R\to[0,\infty)$ is a continuous function such that $\zeta_0^k(t) = \zeta_0(t)$ for every $t\leq \sum_{i=1}^k i!$.
\end{lemma}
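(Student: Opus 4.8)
\emph{Setup and first step.} The idea is to transfer Theorem~\ref{thm:class_advances} from $\CVcf$ to $\oZ$ through the maps $g_k$ and then to compare the resulting representations for different indices. By Lemma~\ref{le:g_k_properties}, $g_k$ maps $\CVcf$ into $\CVsf$ (part (v)), commutes with the pointwise maximum and minimum (part (x)) and with $\SLn$ transformations and translations (part (ix)), and is continuous from $\CVc$ to $\CVs$ (part (xi)). Since $\oZ$ is a continuous, $\SLn$ and translation invariant valuation on $\CVsf$ with values in $[0,\infty)$, the map $u\mapsto\oZ(g_k(u))$ inherits all of these properties on $\CVcf$, so Theorem~\ref{thm:class_advances} yields continuous functions $\zeta_0^k,\zeta_1^k,\zeta_2^k:\R\to[0,\infty)$ with the required moment and support conditions and the first displayed identity; I fix these functions from now on.

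\emph{Decoupling by dilation.} For $w\in\CVsf$ and $\lambda>0$ put $w_\lambda(x)=w(x/\lambda)$; then $w_\lambda\in\CVsf$ and $g_m(w_\lambda)\in\CVsf$. Applying the first-step identity for the index $m$ to $w_\lambda\in\CVcf$, and using that the three operators appearing there are homogeneous of degrees $0$, $n$ and $-n$, respectively (Lemmas~\ref{le:min_is_a_val}, \ref{le:int_is_a_val} and \ref{le:polar_vol_is_a_val}), together with $\min w_\lambda=\min w$ and $\dom w^*=\Rn$, gives $\oZ(g_m(w_\lambda))=a_m+\lambda^n b_m+\lambda^{-n}c_m$, where $a_m:=\zeta_0^m(\min w)$, $b_m:=\int_{\Rn}\zeta_1^m(w)\d x$ and $c_m:=\int_{\Rn}\zeta_2^m(\nabla w^*\cdot x-w^*)\d x$. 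By Lemma~\ref{le:g_k_properties}(viii) we have $g_m(w_\lambda)\eto w_\lambda$ as $m\to\infty$, so continuity of $\oZ$ on $\CVsf$ forces $\oZ(g_m(w_\lambda))\to\oZ(w_\lambda)$, a finite number. Hence $a_m+\lambda^n b_m+\lambda^{-n}c_m$ converges for every $\lambda>0$. Evaluating at $\lambda\in\{1,2,\tfrac12\}$ produces three convergent sequences which form an invertible linear image of $(a_m,b_m,c_m)$ — the matrix with rows $(1,\lambda^n,\lambda^{-n})$, $\lambda\in\{1,2,\tfrac12\}$, has determinant $(2^n-2^{-n})(2^n+2^{-n}-2)>0$ — so $a_m,b_m,c_m$ each converge to a finite limit, and $\lim_m a_m+\lambda^n\lim_m b_m+\lambda^{-n}\lim_m c_m=\oZ(w_\lambda)$ for every $\lambda>0$.

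\emph{Identifying the limits and $\zeta_0$.} Taking $w=v\in\CVsf$ above immediately gives the second assertion: $\int_{\Rn}\zeta_1^k(v)\d x$ and $\int_{\Rn}\zeta_2^k(\nabla v^*\cdot x-v^*)\d x$ converge to finite limits as $k\to\infty$. For the compatibility identities, fix $u\in\CVcf$ and take $w=g_k(u)\in\CVsf$, so $w_\lambda=g_k(u_\lambda)$ and $a_m=\zeta_0^m(\min g_k(u))$, $b_m=\int_{\Rn}\zeta_1^m(g_k(u))\d x$, $c_m=\int_{\Rn}\zeta_2^m(\nabla(g_k\circ u)^*\cdot x-(g_k\circ u)^*)\d x$, while $\oZ(w_\lambda)=\oZ(g_k(u_\lambda))=\zeta_0^k(\min u)+\lambda^n\int_{\Rn}\zeta_1^k(u)\d x+\lambda^{-n}\int_{\dom u^*}\zeta_2^k(\nabla u^*\cdot x-u^*)\d x$ by the first-step identity for the index $k$ and homogeneity. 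Since $1,\lambda^n,\lambda^{-n}$ are linearly independent functions of $\lambda$ on $(0,\infty)$, comparing coefficients yields precisely the three displayed identities. In particular $\lim_m\zeta_0^m(g_k(t))=\zeta_0^k(t)$ for every $t\in\R$ (minima of functions in $\CVcf$ exhaust $\R$), so substituting $t=g_k^{-1}(s)$ shows that $\zeta_0^k(g_k^{-1}(s))$ does not depend on $k$; calling this common value $\zeta_0(s)$ defines a continuous, non-negative function with $\zeta_0^k=\zeta_0\circ g_k$, whence $\zeta_0^k(t)=\zeta_0(t)$ for $t\le\sumf_k$ and $\zeta_0^k(\min u)=\zeta_0(\min g_k(u))$.

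\emph{Main obstacle.} The hard part is the decoupling step: a priori only the sum of the three homogeneity pieces of $\oZ\circ g_m$ is under control as $m\to\infty$, and the argument must exploit dilation invariance and the distinct degrees $0$, $n$, $-n$ to isolate the individual limits. One also has to track carefully that every function built by composing the $g_j$'s and dilating still lies in $\CVsf$ — so that $\oZ$ is applicable — and that $g_k(u)$ is super-coercive, that is, has a conjugate defined on all of $\Rn$, which is what makes the $\zeta_2$-integrals over $\Rn$ transform cleanly under dilation.
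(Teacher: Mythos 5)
Your proposal is correct and follows essentially the same route as the paper: pull $\oZ$ back through $g_k$ and apply Theorem~\ref{thm:class_advances}, decouple the degree-$0$, degree-$n$ and degree-$(-n)$ pieces via dilations $u_\lambda$, and then compare $\oZ(g_k(u))$ with $\lim_{m\to\infty}\oZ\big(g_m(g_k(u))\big)$ coefficientwise. Your explicit invertible-matrix argument for the decoupling and your definition of $\zeta_0$ as the $k$-independent function $\zeta_0^k\circ g_k^{-1}$ (which makes its continuity and the identity $\zeta_0^k(t)=\zeta_0(t)$ for $t\le\sumf_k$ immediate) are only minor streamlinings of the paper's own argument.
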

\begin{proof}
By Lemma~\ref{le:g_k_properties} the map
$$u\mapsto \oZ\big(g_k(u)\big)$$
defines a continuous, $\SLn$ and translation invariant valuation on $\CVcf$ for every $k\in\N$. Hence, by Theorem~\ref{thm:class_advances} there exist continuous functions $\zeta_0^k, \zeta_1^k, \zeta_2^k:\R\to[0,\infty)$ such that $\zeta_1^k$ has finite moment of order $n-1$ and $\zeta_2^k(t)=0$ for every $t\geq T_k$ with some $T_k\in\R$ such that
$$\oZ\big(g_k(u)\big) = \zeta_0^k\big(\min\nolimits_{x\in\Rn} u(x)\big) + \int_{\Rn} \zeta_1^k\big(u(x)\big) \d x + \int_{\dom u^*} \zeta_2^k\big(\nabla u^*(x) \cdot x - u^*(x)\big) \d x$$
for every $u\in\CVcf$.

Next, fix an arbitrary $v\in\CVsf$ and let $t_0=\min_{x\in\Rn} v(x)$. Furthermore, for $\lambda >0$ let $v_\lambda(x)=v(x/\lambda)$ for $x\in\Rn$. Note, that by Lemma~\ref{le:g_k_properties} we have $g_k(v_\lambda)\eto v_\lambda$ as $k\to\infty$. Hence, by the continuity of $\oZ$, Lemma~\ref{le:int_is_a_val} and Lemma~\ref{le:polar_vol_is_a_val}
\begin{align*}
\oZ(v_\lambda) &= \lim_{k\to\infty} \oZ\big(g_k(v_\lambda)\big)\\
&= \lim_{k\to\infty} \left(\zeta_0^k(t_0)+\int_{\Rn} \zeta_1^k\big(v_\lambda(x)\big)\d x + \int_{\Rn} \zeta_2^k\big(\nabla v_\lambda^*(x) \cdot x - v_\lambda^*(x)\big) \d x \right)\\
&= \lim_{k\to\infty} \left(\zeta_0^k(t_0)+\lambda^n \int_{\Rn} \zeta_1^k\big(v(x)\big)\d x + \lambda^{-n} \int_{\Rn} \zeta_2^k\big(\nabla v^*(x) \cdot x - v^*(x)\big) \d x \right).
\end{align*}
In particular, the limit on the right-hand side exists and is finite. Considering linear combinations of the last equation with different values of $\lambda$ shows that
$$\lim_{k\to\infty} \int_{\Rn} \zeta_1^k\big(v(x)\big) \d x \quad \text{and} \quad \lim_{k\to\infty} \int_{\Rn} \zeta_2^k\big(\nabla v^*(x) \cdot x - v^*(x)\big) \d x$$
exist and are finite. Moreover, the limit $\lim_{k\to\infty} \zeta_0^k(t_0)$ exists and is finite for every $t\in\R$. Since $v\in\CVsf$ and therefore also $t_0\in\R$ were arbitrary, there exists a function $\zeta_0:\R\to[0,\infty)$ such that $\zeta_0^k\to\zeta_0$ pointwise as $k\to\infty$. Since $t\mapsto \oZ(u+t)$ is continuous for every $u\in\CVsf$ the function $\zeta_0$ must be continuous as well.

Next, let $u\in\CVcf$ and $k\in\N$ be arbitrary. By Lemma~\ref{le:g_k_properties} we have $g_k(u)\in\CVsf$ and therefore
\begin{align*}
\zeta_0^k\big(\min\nolimits_{x\in\Rn} u(x)\big) &+ \int_{\Rn} \zeta_1^k\big(u(x)\big) \d x + \int_{\dom u^*} \zeta_2^k\big(\nabla u^*(x) \cdot x - u^*(x)\big) \d x\\
&= \oZ\big(g_k(u)\big)\\
&= \lim_{m\to\infty} \oZ\big(g_m(g_k(u))\big)\\
&= \lim_{m\to\infty} \zeta_0^m\big(\min\nolimits_{x\in\Rn} g_k(u(x))\big) + \lim_{m\to\infty} \int_{\Rn} \zeta_1^m\big(g_k(u(x))\big) \d x\\
&\quad + \lim_{m\to\infty} \int_{\Rn} \zeta_2^m\big(\nabla(g_k\circ u)^*(x) \cdot x - (g_k\circ u)^*(x)\big) \d x.
\end{align*} 
By homogeneity and the definition of $\zeta_0$ we therefore obtain
\begin{eqnarray*}
\zeta_0^k\big(\min\nolimits_{x\in\Rn} (u(x))\big) &=& \zeta_0\big(\min\nolimits_{x\in\Rn} g_k(u(x))\big)\\
\int_{\Rn} \zeta_1^k\big(u(x)\big) \d x &=& \lim_{m\to\infty} \int_{\Rn} \zeta_1^m\big(g_k(u(x))\big) \d x\\
\int_{\dom u^*} \zeta_2^k\big(\nabla u^*(x) \cdot x - u^*(x)\big) \d x &=& \lim_{m\to\infty} \int_{\Rn} \zeta_2^m\big(\nabla (g_k \circ u)^*(x) \cdot x - (g_k \circ u)^*(x)\big) \d x.
\end{eqnarray*}
\end{proof}

In the following we will call the sequences $\zeta_1^k, \zeta_2^k$ that appear in Lemma~\ref{le:growth_function_sequences} the \emph{growth function sequences}  of the valuation $\oZ$.

\begin{lemma}
\label{le:zeta_1}
For $n\geq 2$, let $\oZ:\CVsf\to[0,\infty)$ be a continuous, $\SLn$ and translation invariant valuation with growth function sequence $\zeta_1^k$, $k\in\N$. There exists a continuous function $\zeta_1:\R\to[0,\infty)$ such that
$$\zeta_1\big(g_k(t)\big)=\zeta_1^k(t)$$
for every $k\in\N$ and $t\in\R$.
\end{lemma}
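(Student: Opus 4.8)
The plan is to extract the function $\zeta_1$ directly from the identities proved in Lemma~\ref{le:growth_function_sequences} by testing the valuation on a suitable family of super-coercive functions, namely the functions $x\mapsto g_k^{-1}(v_l^t(|x|))$ from Lemma~\ref{le:g_k_inv_on_v_l_t}, whose $g_k$-images degenerate to a scaled indicator of the ball. First I would recall the key relation from Lemma~\ref{le:growth_function_sequences}, namely that for $u\in\CVcf$ one has
\begin{equation*}
\int_{\Rn}\zeta_1^k\big(u(x)\big)\d x=\lim_{m\to\infty}\int_{\Rn}\zeta_1^m\big(g_k(u(x))\big)\d x .
\end{equation*}
The right-hand side is governed, through Lemma~\ref{le:int_is_a_val} and the dominated/monotone behaviour already established, by the putative limiting function $\zeta_1$; the goal is to show that a single continuous $\zeta_1$ represents all the $\zeta_1^k$ after the substitution $t\mapsto g_k(t)$.

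Concretely, I would proceed as follows. Fix $t\in\R$ and $k\in\N$. Apply the valuation to $w_l:=g_k^{-1}(v_l^t(|\cdot|))\in\CVsf$; by Lemma~\ref{le:g_k_inv_on_v_l_t} we have $g_k(w_l)=v_l^t(|\cdot|)\eto \Ind_{B^n}+g_k^{-1}(t)$ as $l\to\infty$, while $w_l$ itself epi-converges to $g_k^{-1}\big(\Ind_{B^n}+t\big)=\Ind_{B^n}+g_k^{-1}(t)$ (using that $g_k^{-1}$ is continuous and increasing, hence commutes with the sublevel-set description, and is affine-compatible). Now evaluate $\oZ(w_l)$ in two ways: on the one hand, by continuity $\oZ(w_l)\to\oZ(\Ind_{B^n}+g_k^{-1}(t))$; on the other hand, $w_l\in\CVsf$ but also — crucially — $w_l$ is of the form $g_k$ applied to a coercive function only after composing with $g_k$, so I instead apply Lemma~\ref{le:growth_function_sequences} to the coercive function $u$ with $g_k(u)=v_l^t(|\cdot|)$, i.e. $u=g_k^{-1}(v_l^t(|\cdot|))=w_l$ restricted appropriately. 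The point is that $\oZ(g_k(u))=\oZ(v_l^t(|\cdot|))$ and the explicit formula from Lemma~\ref{le:growth_function_sequences} gives
\begin{equation*}
\oZ\big(v_l^t(|\cdot|)\big)=\zeta_0^k(\min w_l)+\int_{\Rn}\zeta_1^k\big(w_l(x)\big)\d x+\int_{\Rn}\zeta_2^k\big(\nabla w_l^*(x)\cdot x-w_l^*(x)\big)\d x .
\end{equation*}
As $l\to\infty$, the $\zeta_1^k$-integral concentrates on the ball $B^n$ where $w_l\to g_k^{-1}(t)$ and the integrand tends to $\zeta_1^k(g_k^{-1}(t))\cdot v_n$ by continuity of $\zeta_1^k$ (here I would need a domination argument on the tail, using that $\zeta_1^k$ has finite $(n-1)$st moment and that the functions $v_l^t$ can be chosen to grow fast enough outside $B^n$ uniformly in $l$, so that the outside contribution vanishes); the $\zeta_2^k$-term is handled analogously, or shown to converge to a quantity depending only on $t$ and $k$; the $\zeta_0^k$-term is explicit. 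Comparing the two evaluations of the limit of $\oZ(w_l)$ yields an equation of the form $\zeta_1^k(g_k^{-1}(t))=F(t)$ for a function $F$ that does not depend on $k$ (it is read off from $\oZ(\Ind_{B^n}+s)$ and the already-$k$-independent pieces). Setting $\zeta_1:=F$ and replacing $t$ by $g_k(s)$ gives $\zeta_1^k(s)=\zeta_1(g_k(s))$, which is the assertion; continuity of $\zeta_1$ follows from continuity of $s\mapsto\oZ(\Ind_{B^n}+s)$ together with continuity of each $\zeta_1^k$ and the fact that the $g_k$ exhaust $\R$ with $g_k(s)=s$ for $s\le\sumf_k$.

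The main obstacle I anticipate is the interchange of limits and the tail control in the $\zeta_1^k$-integral: one must show that $\int_{\Rn}\zeta_1^k(w_l(x))\d x\to v_n\,\zeta_1^k(g_k^{-1}(t))$ as $l\to\infty$, which requires choosing the auxiliary functions $v_l^t$ (already constructed in the Appendix) so that $w_l$ is uniformly large off any neighbourhood of $B^n$, and then using the finite-moment property of $\zeta_1^k$ to bound the exterior integral uniformly in $l$ and send it to zero. A secondary subtlety is to verify that the $k$-independent function $F$ really is well defined, i.e. that the $\zeta_0^k$- and $\zeta_2^k$-contributions in the limit $l\to\infty$ either cancel or are themselves $k$-independent when expressed in terms of $\oZ(\Ind_{B^n}+s)$; this uses the consistency relations $\zeta_0^k(s)=\zeta_0(g_k(s))$ from Lemma~\ref{le:growth_function_sequences} and an analogous (to be established, perhaps in a companion lemma) relation for $\zeta_2^k$. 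Everything else — convexity/super-coercivity of the test functions, epi-convergence of $g_k(w_l)$, and affine compatibility of $g_k^{-1}$ — is routine given Lemma~\ref{le:g_k_properties} and Lemma~\ref{le:g_k_inv_on_v_l_t}.
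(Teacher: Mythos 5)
Your choice of test functions $w_l:=g_k^{-1}(v_l^t(|\cdot|))$ is the right one, and you even quote the identity from Lemma~\ref{le:growth_function_sequences} that does all the work, but the mechanism you then use to extract a $k$-independent function is flawed. Your comparison hinges on $\oZ\big(\Ind_{B^n}+g_k^{-1}(t)\big)$ and on continuity of $s\mapsto\oZ(\Ind_{B^n}+s)$; these expressions are not defined, since $\Ind_{B^n}+s$ attains the value $+\infty$ and therefore does not lie in $\CVsf$, the domain of $\oZ$. For the same reason continuity of $\oZ$ does not even guarantee that $\oZ(w_l)$ or $\oZ(v_l^t(|\cdot|))$ converges as $l\to\infty$: the epi-limit leaves the space. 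In addition, your ``two evaluations'' compare $\oZ(w_l)$ with the decomposition of $\oZ\big(g_k(w_l)\big)=\oZ(v_l^t(|\cdot|))$, which are different quantities, and your fallback of subtracting the $\zeta_0^k$- and $\zeta_2^k$-contributions from the full valuation needs a consistency relation for $\zeta_2^k$ that you concede is ``to be established''; in the paper this is only obtained later (Lemmas~\ref{le:sup_t_k} and~\ref{le:lim_zeta_2}) and is not needed here, and without it the statement ``sum of three terms equals a $k$-independent quantity'' says nothing about the $\zeta_1^k$-term alone.

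The repair is short and is exactly the paper's argument: apply the displayed identity of Lemma~\ref{le:growth_function_sequences} directly to $u=w_l\in\CVsf\subseteq\CVcf$, so that
$$\int_{\Rn}\zeta_1^k\big(w_l(x)\big)\d x=\lim_{m\to\infty}\int_{\Rn}\zeta_1^m\big(g_k(w_l(x))\big)\d x=\lim_{m\to\infty}\int_{\Rn}\zeta_1^m\big(v_l^t(|x|)\big)\d x,$$
whose right-hand side visibly does not involve $k$. Then let $l\to\infty$ on the left: since $w_l\eto\Ind_{B^n}+g_k^{-1}(t)$ and $u\mapsto\int_{\dom u}\zeta_1^k\big(u(x)\big)\d x$ is a continuous valuation on $\CVc$ by Lemma~\ref{le:int_is_a_val} (this also removes your worry about tail domination; no estimate is needed because the limit function belongs to $\CVc$, even though not to $\CVsf$), the left-hand side tends to $V_n(B^n)\,\zeta_1^k\big(g_k^{-1}(t)\big)$. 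Hence $\zeta_1(t):=\zeta_1^k\big(g_k^{-1}(t)\big)$ is well defined independently of $k$, is continuous as a composition of continuous functions, and satisfies $\zeta_1\big(g_k(s)\big)=\zeta_1^k(s)$.
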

\begin{proof}
Let $t\in\R$ be arbitrary. Lemma~\ref{le:g_k_inv_on_v_l_t} shows that $x\mapsto g_k^{-1}(v_l^t(|x|)) \in\CVsf$ for every $k,l\in\N$ and by Lemma~\ref{le:growth_function_sequences} we have
$$
\int_{\Rn} \zeta_1^k\big(g_k^{-1} (v_l^t(|x|))\big) \d x = \lim_{m\to\infty} \int_{\Rn} \zeta_1^m\big(g_k(g_k^{-1}(v_l^t(|x|)))\big) \d x =\lim_{m\to\infty} \int_{\Rn} \zeta_1^m\big(v_l^t(|x|)\big) \d x.
$$
Since $g_k^{-1}(v_l^t(|\cdot|)) \eto \Ind_{B^n}+g_k^{-1}(t)$ we therefore have by Lemma~\ref{le:int_is_a_val}
\begin{align*}
V_n(B^n) \zeta_1^k\big(g_k^{-1}(t)\big) &= \int_{\Rn} \zeta_1^k\big(\Ind_{B^n}(x) + g_k^{-1}(t)\big) \d x\\
&= \lim_{l\to\infty} \int_{\Rn} \zeta_1^k\big(g_k^{-1}(v_l^t(|x|))\big) \d x\\
&= \lim_{l\to\infty} \lim_{m\to\infty} \int_{\Rn} \zeta_1^m\big(v_l^t(|x|)\big) \d x.
\end{align*}
Since the right-hand side of this equation is independent of $k$ and only depends on $t$, this defines a non-negative, continuous function $\zeta_1:\R\to[0,\infty)$ such that $\zeta_1(t)=\zeta_1^k(g_k^{-1}(t))$ or equivalently $\zeta_1(g_k(t)) = \zeta_1^k(t)$ for every $t\in\R$.
\end{proof}

For a continuous, $\SLn$ and translation invariant valuation $\oZ:\CVsf\to[0,\infty)$, we call the function $\zeta_1$ from Lemma~\ref{le:zeta_1} the \emph{volume growth function} of $\oZ$.

\begin{lemma}
\label{le:lim_zeta_1}
For $n\geq 2$, let $\oZ:\CVsf\to[0,\infty)$ be a continuous, $\SLn$ and translation invariant valuation. The volume growth function $\zeta_1$ has finite moment of order $n-1$ and furthermore
$$\lim_{k\to\infty} \int_{\Rn} \zeta_1^k\big(u(x)\big) \d x = \int_{\Rn} \zeta_1\big(u(x)\big) \d x$$
for every $u\in\CVsf$.
\end{lemma}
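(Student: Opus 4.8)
The plan is to establish the two claims of Lemma~\ref{le:lim_zeta_1} separately, using the two identities already secured in Lemma~\ref{le:growth_function_sequences} together with the relation $\zeta_1(g_k(t))=\zeta_1^k(t)$ from Lemma~\ref{le:zeta_1}. First I would prove the convergence statement. Fix $u\in\CVsf$. By Lemma~\ref{le:growth_function_sequences}, $\lim_{k\to\infty}\int_{\Rn}\zeta_1^k(u(x))\d x$ exists and is finite, so it suffices to identify this limit as $\int_{\Rn}\zeta_1(u(x))\d x$. The key point is that since $u$ is super-coercive and hence continuous, on any bounded region $u$ is bounded, and once $k$ is large enough that $\sumf_k=\sum_{i=1}^k i!$ exceeds $\sup$ of $u$ on that region, Lemma~\ref{le:g_k_properties}(vi) gives $g_k(u(x))=u(x)$ there, whence $g_k^{-1}(u(x))=u(x)$ and $\zeta_1^k(u(x))=\zeta_1(g_k(u(x)))=\zeta_1(u(x))$. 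Thus $\zeta_1^k(u(x))\to\zeta_1(u(x))$ pointwise on all of $\Rn$. To upgrade pointwise convergence to convergence of the integrals I would use a uniform integrable-majorant argument: since $g_k(r)\geq r$ and $g_k$ is increasing, $g_k^{-1}(s)\leq s$, so $\zeta_1^k(u(x))=\zeta_1(g_k(g_k^{-1}(u(x))))$ — more usefully, compare $\int_{\Rn}\zeta_1^k(u(x))\d x$ with $\int_{\Rn}\zeta_1^k(w(x))\d x$ for a fixed dominating radial super-coercive $w\geq u$ built so that the latter integrals are uniformly bounded, then invoke a Fatou/dominated-convergence step. Alternatively, and more cleanly, apply Lemma~\ref{le:growth_function_sequences}'s displayed identity with $g_k(u)$ in place of $u$ and pass $k\to\infty$ directly, using $g_k(u)\eto u$ and continuity of $\oZ$ together with Lemma~\ref{le:int_is_a_val} to split off the volume term.

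For the moment condition on $\zeta_1$: suppose for contradiction that $\int_0^\infty t^{n-1}\zeta_1(t)\d t=+\infty$. Then Lemma~\ref{le:no_moment_no_finite} produces a function $u_{\zeta_1}\in\CVsf$ with $\int_{\Rn}\zeta_1(u_{\zeta_1}(x))\d x=+\infty$. On the other hand, $u_{\zeta_1}\in\CVsf$ and the convergence statement just proved (or, directly, the finiteness from Lemma~\ref{le:growth_function_sequences}) force $\int_{\Rn}\zeta_1(u_{\zeta_1}(x))\d x=\lim_{k\to\infty}\int_{\Rn}\zeta_1^k(u_{\zeta_1}(x))\d x<\infty$, a contradiction. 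This uses that the left-hand equality is exactly what the first part of the lemma provides; to avoid circularity one can instead note that $\int_{\Rn}\zeta_1(u_{\zeta_1}(x))\d x$ is a monotone limit of $\int_{\Rn}\zeta_1(g_k(u_{\zeta_1}(x)))^{\text{-type}}$ truncations dominated by the finite quantity $\lim_k\int_{\Rn}\zeta_1^k(u_{\zeta_1}(x))\d x$, via Fatou applied to the pointwise convergence $\zeta_1^k(u_{\zeta_1})\to\zeta_1(u_{\zeta_1})$.

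The main obstacle is the interchange of limit and integral in $\lim_{k\to\infty}\int_{\Rn}\zeta_1^k(u(x))\d x=\int_{\Rn}\zeta_1(u(x))\d x$: pointwise convergence $\zeta_1^k(u(x))\to\zeta_1(u(x))$ is easy from the $g_k$-stabilization property, but there is no a priori monotonicity in $k$ (the functions $\zeta_1^k$ need not be ordered), so one needs either a genuine dominating function valid uniformly in $k$ — obtained by comparing $u$ from above with a radial super-coercive function and invoking the known finite-moment structure of each $\zeta_1^k$ — or, preferably, a detour through the valuation $\oZ$ itself: apply the already-established representation to $g_k(u)\in\CVsf$, use $g_k(u)\eto u$, continuity of $\oZ$, and the fact (Lemma~\ref{le:int_is_a_val}, Lemma~\ref{le:polar_vol_is_a_val}) that the volume and polar-volume functionals are continuous, so that the three terms separate in the limit and the volume term is forced to equal $\int_{\Rn}\zeta_1(u(x))\d x$ while the other two remain finite. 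I expect the cleanest writeup to take this second route, invoking Lemma~\ref{le:growth_function_sequences} with the function $g_k(u)$ and letting $m\to\infty$ first and then $k\to\infty$.
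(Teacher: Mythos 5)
Your ingredients are the right ones, but as written the argument is circular in its ordering. You prove the convergence statement \emph{first}, and your preferred route for it ("split off the volume term") rests on Lemma~\ref{le:int_is_a_val}, i.e.\ on the continuity of $v\mapsto\int_{\Rn}\zeta_1(v(x))\d x$. That lemma has the hypothesis that $\zeta_1$ has finite moment of order $n-1$ --- exactly the property you only establish \emph{afterwards}. Without it, the functional $v\mapsto\int_{\Rn}\zeta_1(v(x))\d x$ need not be continuous, and by Lemma~\ref{le:no_moment_no_finite} it need not even be finite on $\CVsf$, so the identity you want to force in the limit is not yet meaningful. Your Fatou remark removes the circularity only from the moment-condition half (it makes that half independent of the convergence claim), but it does not repair the convergence half, which still quotes Lemma~\ref{le:int_is_a_val} before its hypothesis is available. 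Your fallback route via a dominating radial function $w\geq u$ does not close this gap either: $\zeta_1^k$ is merely non-negative and continuous, not monotone, so $w\geq u$ gives no pointwise comparison between $\zeta_1^k(u(x))$ and $\zeta_1^k(w(x))$, and no uniform-in-$k$ integrable majorant is produced.

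The repair is entirely contained in your own proposal and is what the paper does: reverse the order. First prove the moment condition by contradiction --- take $u_{\zeta_1}\in\CVsf$ from Lemma~\ref{le:no_moment_no_finite}, note that $\zeta_1^k(u_{\zeta_1}(x))=\zeta_1(g_k(u_{\zeta_1}(x)))=\zeta_1(u_{\zeta_1}(x))$ once $u_{\zeta_1}(x)\leq\sumf_k$, and use non-negativity together with the finiteness of $\lim_k\int_{\Rn}\zeta_1^k(u_{\zeta_1}(x))\d x$ from Lemma~\ref{le:growth_function_sequences} (your Fatou step; the paper phrases it as a monotone limit over the sublevel sets $\{u_{\zeta_1}\leq\sumf_k\}$). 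Only then invoke Lemma~\ref{le:int_is_a_val}: since $\zeta_1^k=\zeta_1\circ g_k$ by Lemma~\ref{le:zeta_1}, one has $\int_{\Rn}\zeta_1^k(u(x))\d x=\int_{\Rn}\zeta_1(g_k(u(x)))\d x$, and $g_k(u)\eto u$ (Lemma~\ref{le:g_k_properties}) plus the now-justified continuity gives the convergence statement; no separation of the three terms of $\oZ(g_k(u))$ beyond what Lemma~\ref{le:growth_function_sequences} already provides is needed.
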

\begin{proof}
Assume that $\zeta_1$ does not have finite moment of order $n-1$. By Lemma~\ref{le:no_moment_no_finite} there exists $u_\zeta\in\CVsf$ such that $\int_{\Rn} \zeta_1(u_\zeta(x)) \d x = +\infty$. For $k\in\N$ let $A_k:=\{u_\zeta \leq \sum_{i=1}^k k!\}$. Note, that by the properties of $u_\zeta$ we have $\bigcup_{k=1}^{\infty} A_k = \Rn$. Furthermore, by Lemma~\ref{le:g_k_properties} and Lemma~\ref{le:zeta_1} we have $\zeta_1^k(u_\zeta(x)) = \zeta_1(g_k(u(x))) = \zeta_1(u(x))$ for every $x\in A_k$ and therefore
$$
\lim_{k\to\infty} \int_{\Rn} \zeta_1^k\big(u_\zeta(x)\big) \d x = \lim_{k\to\infty} \left( \int_{A_k} \zeta_1\big(u_\zeta(x)\big) \d x + \int_{\Rn \backslash A_k} \zeta_1^k\big(u_\zeta(x)\big) \d x \right)
$$
which must be finite by Lemma~\ref{le:growth_function_sequences}. Since both integrals on the right-hand side are non-negative for every $k\in\N$ and $\int_{A_k} \zeta_1(u_\zeta(x)) \d x$ is increasing in $k$, the limit
$$\lim_{k\to\infty} \int_{A_k} \zeta_1\big(u_\zeta(x)\big) \d x$$
exists and is finite, which contradicts the choice of $u_\zeta\in\CVsf$. Hence, $\zeta_1$ must have finite moment of order $n-1$.

By Lemma~\ref{le:int_is_a_val} the map
$$u\mapsto \int_{\Rn} \zeta_1\big(u(x)\big) \d x$$
defines a continuous valuation on $\CVsf$ and therefore
\begin{align*}
\lim_{k\to\infty} \int_{\Rn} \zeta_1^k\big(u(x)\big) \d x = \lim_{k\to\infty} \int_{\Rn} \zeta_1\big(g_k(u(x)\big) \d x = \int_{\Rn} \zeta_1\big(u(x)\big) \d x
\end{align*}
since $g_k(u) \eto u$ for every $u\in\CVsf$.
\end{proof}

\begin{lemma}
\label{le:sup_t_k}
For $n\geq 2$, let $\oZ:\CVsf\to[0,\infty)$ be a continuous, $\SLn$ and translation invariant valuation with growth function sequence $\zeta_2^k$, $k\in\N$. There exists $T\in\R$ such that
$$\zeta_2^k(t)=0$$
for every $k\in\N$ and $t\geq T$.
\end{lemma}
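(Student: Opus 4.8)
The plan is to argue by contradiction, turning a failure of uniformity into a violation of the finiteness assertion in Lemma~\ref{le:growth_function_sequences}, using the test functions supplied by Lemma~\ref{le:special_construction}. Suppose no such $T$ exists; then the set $\{t\in\R\,:\,\zeta_2^k(t)>0\text{ for some }k\in\N\}$ is unbounded above, so there are sequences $(k_j)_{j\in\N}$ in $\N$ and $(\tau_j)_{j\in\N}$ in $\R$ with $\tau_j\to\infty$ and $\zeta_2^{k_j}(\tau_j)>0$. First I would observe that the $k_j$ cannot remain bounded: otherwise some fixed index $k_0$ would recur infinitely often, forcing $\zeta_2^{k_0}$ to be positive at arbitrarily large arguments, which contradicts the fact — part of Lemma~\ref{le:growth_function_sequences} — that $\zeta_2^{k_0}$ vanishes on $[T_{k_0},\infty)$. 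Passing to subsequences, I may therefore take $(m_j):=(k_j)$ and $(s_j):=(\tau_j)$ to be strictly increasing with $s_j\to\infty$, and I set $c_j:=\zeta_2^{m_j}(s_j)>0$.

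Next I would apply Lemma~\ref{le:special_construction} with $t_k:=s_k$ and with a strictly increasing sequence of radii $0=b_0<b_1<b_2<\cdots$, $b_k\to\infty$, chosen recursively so that the $j$-th annulus is large compared with $c_j$, namely $V_n(B^n)\,(b_j^n-b_{j-1}^n)\,c_j\ge j$ for every $j\in\N$ (one can additionally impose $b_k\ge b_{k-1}+1$ to guarantee $b_k\to\infty$). The lemma then yields $u\in\CVsf$, $u(x)=v(|x|)$ with $v$ the monotone, piecewise linear profile of slopes $b_k$ — membership in $\CVsf$ being immediate since $v$ is increasing, convex, finite-valued and has unbounded slopes — such that $\nabla u^*(x)\cdot x-u^*(x)=s_k$ for a.e.\! $x$ with $b_{k-1}<|x|<b_k$. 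As $b_k\to\infty$, these annuli exhaust $\Rn$ up to a null set, so for every $m\in\N$
$$\int_{\Rn}\zeta_2^m\big(\nabla u^*(x)\cdot x-u^*(x)\big)\,\d x=V_n(B^n)\sum_{k=1}^{\infty}(b_k^n-b_{k-1}^n)\,\zeta_2^m(s_k),$$
which is a finite sum because $\zeta_2^m$ vanishes eventually while $s_k\to\infty$.

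Finally, since all summands are non-negative, retaining only the term $k=j$ gives $\int_{\Rn}\zeta_2^{m_j}(\nabla u^*(x)\cdot x-u^*(x))\,\d x\ge V_n(B^n)(b_j^n-b_{j-1}^n)c_j\ge j$, so this integral diverges to $+\infty$ along the subsequence $(m_j)$. This contradicts Lemma~\ref{le:growth_function_sequences}, according to which $\lim_{k\to\infty}\int_{\Rn}\zeta_2^k(\nabla u^*(x)\cdot x-u^*(x))\,\d x$ exists and is finite for every $u\in\CVsf$. Hence a uniform $T$ must exist. The steps that need the most care are the subsequence extraction — specifically, ruling out bounded indices by invoking the eventual vanishing of each individual $\zeta_2^k$ — and the passage from Lemma~\ref{le:special_construction} to the explicit annular integral formula; the recursive choice of the radii $b_k$ and the remaining verifications are routine bookkeeping.
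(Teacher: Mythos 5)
Your strategy is the same as the paper's: negate the statement, extract a subsequence $\zeta_2^{m_j}$ with $\zeta_2^{m_j}(s_j)>0$ and $s_j\to\infty$, build a radial test function via Lemma~\ref{le:special_construction} whose integrand $\nabla u^*(x)\cdot x-u^*(x)$ equals $s_j$ on a prescribed annulus of measure at least $j/\zeta_2^{m_j}(s_j)$, and contradict the finiteness assertion of Lemma~\ref{le:growth_function_sequences}. However, there is a genuine gap at precisely the step you dismiss as routine bookkeeping. You fix the pairs $(m_j,s_j)$ \emph{first} and only afterwards choose the radii $b_j$ so that $v_n(b_j^n-b_{j-1}^n)\,\zeta_2^{m_j}(s_j)\ge j$. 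Since $c_j=\zeta_2^{m_j}(s_j)$ may be arbitrarily small, the $b_j$ may be forced to grow arbitrarily fast compared with the level increments $s_{j+1}-s_j$, and then the piecewise linear profile $v$ (slope $b_j$ while $s_j<v<s_{j+1}$) blows up at the finite radius $\sum_j (s_{j+1}-s_j)/b_j$: for instance $s_j=j$ and $c_j=e^{-e^j}$ force $b_j$ of order $e^{e^j/n}$ and the series converges. In that situation there is no finite-valued $v:[0,\infty)\to\R$ as required in Lemma~\ref{le:special_construction}; the natural extension of $u$ equals $+\infty$ outside a ball, so $u\notin\CVsf$, and Lemma~\ref{le:growth_function_sequences} — whose finiteness statement is only for $u\in\CVsf$, $\oZ$ not even being defined elsewhere — gives no contradiction. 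Your parenthetical ``membership in $\CVsf$ being immediate since $v$ is \ldots finite-valued'' assumes exactly what has to be verified, and with your order of choices it can fail.

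The repair is the interleaved, recursive selection that the paper carries out: compute the next radius from the data already chosen (so that the $j$-th annulus has measure $j/(v_n\zeta_2^{m_j}(s_j))$), and only \emph{then} pick the next pair $(m_{j+1},s_{j+1})$ with $s_{j+1}-s_j\ge\max\{1,b_{j+1}\}$; this is possible because the negation of the lemma supplies admissible values $t$ at arbitrarily large heights, and automatically with $m_{j+1}>m_j$ once $t$ exceeds $T_{m_1},\dots,T_{m_j}$. That choice guarantees $\sum_j (s_{j+1}-s_j)/b_{j+1}=\infty$, hence the profile is finite on all of $[0,\infty)$, $u\in\CVsf$, and the remainder of your argument (annulus lower bound $\ge j$ along $m_j\to\infty$ versus existence of the finite limit in Lemma~\ref{le:growth_function_sequences}) goes through verbatim. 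An a priori thinning of the sequence $(m_j,s_j)$ cannot substitute for this, because the required sparsity of the $s_j$ depends on the radii, which in turn depend on which pairs are retained.
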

\begin{proof}
We will prove the statement by contradiction and assume that there exists a subsequence $\zeta_2^{k_j}$ and monotone increasing numbers $t_{k_j}\in\R$, $j\in\N$ with $\lim_{j\to\infty} t_{k_j}=+\infty$ such that ${0 < \zeta_2^{k_j}(t_{k_j}) <1}$, which is possible by the properties of $\zeta_2^{k_j}$.
By possibly restricting to another subsequence we can choose the numbers $t_{k_j}$ as follows. Let $t_{k_1}\in\R$ be arbitrary and set $a_{k_1}=0$. If $t_{k_j}$ and $a_{k_j}$ are given, let
$$a_{k_{j+1}} = \sqrt[n]{\frac{j}{v_n \zeta_2^{k_j}(t_{k_j})} + a_{k_j}^n}$$
where $v_n$ is the volume of the $n$-dimensional unit ball and choose $t_{k_{j+1}}$ large enough such that
$$t_{k_{j+1}}-t_{k_j} \geq \max \{1,a_{k_{j+1}}\}.$$ This implies that $t_{k_j}$ and $a_{k_j}$ are strictly monotone increasing sequences such that $\lim_{j\to\infty} t_{k_j} = \lim_{j\to\infty} a_{k_j}= \infty$ and furthermore
\begin{equation}
\label{eq:w_finite}
\frac{t_{k_{j+1}} - t_{k_j}}{a_{k_{j+1}}} \geq \frac{a_{k_{j+1}}}{a_{k_{j+1}}} = 1.
\end{equation}
Next, let $w:[0,\infty)\to\R$ be the piecewise affine function such that $w(0)=t_{k_1}$ and $w'(r)=a_{k_{j+1}}$ for every $r\in[0,\infty)$ with $t_{k_j} < w(r) < t_{k_{j+1}}$, $j\in\N$. Note, that it follows from \eqref{eq:w_finite} that $\sum_{j=1}^{\infty} \frac{t_{k_{j+1}}-t_{k_j}}{a_{k_{j+1}}}=\infty$, which ensures that $w$ is well defined and finite. Furthermore, since $a_{k_j}$ is strictly increasing with $\lim_{j\to\infty} a_{k_j}=\infty$, the function $w$ is super-coercive. If $u\in\CVsf$ is such that $u(x)=w(|x|)$ for every $x\in\Rn$, then by Lemma~\ref{le:special_construction}
$$\nabla u^*(x) \cdot x - u^*(x) = t_{k_j}$$
for a.e.\! $x\in\Rn$ such that $a_{k_j} < |x| < a_{k_{j+1}}$, $j\in\N$. Since the maps $\zeta_2^{k_j}$ are non-negative this gives
\begin{align}
\begin{split}
\label{eq:int_geq_j}
\int_{\Rn} \zeta_2^{k_j}\big(\nabla u^*(x) \cdot x - u^*(x)\big) \d x &\geq \int_{a_{k_j} < |x| < a_{k_{j+1}}} \zeta_2^{k_j}(t_{k_j}) \d x\\
&= v_n(a_{k_{j+1}}^n - a_{k_j}^n) \zeta_2^{k_j}(t_{k_j})\\
&= v_n\left(\frac{j}{v_n \zeta_2^{k_j}(t_{k_j})} + a_{k_j}^n - a_{k_j}^n \right) \zeta_2^{k_j}(t_{k_j})\\
&= j
\end{split}
\end{align}
for every $j\in\N$.

On the other hand, the limit
$$ \lim_{k\to\infty} \int_{\Rn} \zeta_2^k\big(\nabla u^*(x) \cdot x - u^*(x)\big) \d x$$
exists and is finite by Lemma~\ref{le:growth_function_sequences}, which contradicts \eqref{eq:int_geq_j}. Hence, the initial assumption must be false.
\end{proof}

\begin{lemma}
\label{le:lim_zeta_2}
For $n\geq 2$, let $\oZ:\CVsf\to[0,\infty)$ be a continuous, $\SLn$ and translation invariant valuation with growth function sequence $\zeta_2^k$, $k\in\N$. There exists a continuous function $\zeta_2:\R\to[0,\infty)$ such that $\zeta_2(t)=0$ for every $t\geq T$ with some $T\in\R$ and
$$\lim_{k\to\infty} \int_{\Rn} \zeta_2^k\big(\nabla u^*(x) \cdot x - u^*(x)\big) \d x = \int_{\Rn} \zeta_2\big(\nabla u^*(x) \cdot x - u^*(x)\big) \d x$$
for every $u\in\CVsf$.
\end{lemma}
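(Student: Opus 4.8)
The plan is to recover $\zeta_2$ as the pointwise limit $\zeta_2(s):=\lim_{k\to\infty}\zeta_2^k(s)$, to prove its continuity by showing that $\zeta_2^k$ already coincides with $\zeta_2$ on any fixed bounded interval once $k$ is large, and then to upgrade the pointwise convergence to convergence of the integrals by means of a $k$-uniform support bound and dominated convergence. To isolate a single value $s\in\R$, I would apply Lemma~\ref{le:special_construction} with $t_1=s$, $t_j=\max\{s+1,T\}+(j-2)$ for $j\geq 2$ (with $T$ the constant from Lemma~\ref{le:sup_t_k}) and $b_j=j$; since $t_j\geq T$ for every $j\geq 2$, the resulting $u_s\in\CVsf$ satisfies $\nabla u_s^*(x)\cdot x-u_s^*(x)=s$ for a.e.\ $x$ with $|x|<1$ and $\nabla u_s^*(x)\cdot x-u_s^*(x)\geq T$ for a.e.\ $x$ with $|x|>1$. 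As $\zeta_2^k$ vanishes on $[T,\infty)$ for every $k$ by Lemma~\ref{le:sup_t_k}, this gives $\int_{\Rn}\zeta_2^k(\nabla u_s^*(x)\cdot x-u_s^*(x))\d x=V_n(B^n)\,\zeta_2^k(s)$, and since the left-hand side converges to a finite limit as $k\to\infty$ by Lemma~\ref{le:growth_function_sequences}, the limit $\zeta_2(s):=\lim_{k\to\infty}\zeta_2^k(s)$ exists, is non-negative, and vanishes for $s\geq T$.

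The continuity of $\zeta_2$ is the main obstacle. The naive analogue of the argument used for $\zeta_1$ fails here: for an indicator-type function $\Ind_{B^n}+c$ the quantity $\nabla u^*(x)\cdot x-u^*(x)$ is constant on all of $\Rn$, which forces divergent integrals, so one must instead exploit the genuinely non-degenerate functions $u_s$. Since $u_s\in\CVsf\subseteq\CVcf$, Lemma~\ref{le:growth_function_sequences} yields
\[
V_n(B^n)\,\zeta_2^k(s)=\int_{\dom u_s^*}\zeta_2^k\big(\nabla u_s^*(x)\cdot x-u_s^*(x)\big)\d x=\lim_{m\to\infty}\int_{\Rn}\zeta_2^m\big(\nabla(g_k\circ u_s)^*(x)\cdot x-(g_k\circ u_s)^*(x)\big)\d x.
\]
For $k$ large enough that $\sumf_k>\max\{s+1,T\}$, the radial profile $g_k\circ v$ of $g_k\circ u_s$ is again of the type covered by Lemma~\ref{le:special_construction}: on its first linear piece $g_k$ acts as the identity, so that piece has slope $1$ and $g_k\circ v$ takes the value $g_k(s)=s$ at the origin, while every further breakpoint value of $g_k\circ v$ is $\geq T$ — the breakpoints inherited from $v$ contribute values $g_k(t_j)\geq t_j\geq T$, and the breakpoints produced by $g_k$ contribute values lying in $\{\sumf_k,\sumf_{k+1},\dots\}$, all $\geq T$. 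Hence for every $m$ the integral on the right equals $V_n(B^n)\,\zeta_2^m(s)$, its limit equals $V_n(B^n)\,\zeta_2(s)$, and comparison with the left-hand side gives $\zeta_2^k(s)=\zeta_2(s)$. As this holds for all $s$ in any fixed interval $[-R,R]$ once $k$ is large, $\zeta_2$ coincides on $[-R,R]$ with the continuous function $\zeta_2^k$ and is therefore continuous on $\R$.

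For the remaining assertion, let $u\in\CVsf$ be arbitrary. By Lemma~\ref{le:conjugate_subgradients}, $\nabla u^*(x)\cdot x-u^*(x)=u(\nabla u^*(x))$ and $x\in\partial u(\nabla u^*(x))$ for a.e.\ $x$; as $\zeta_2^k$ and $\zeta_2$ vanish on $[T,\infty)$, the integrand $\zeta_2^k(\nabla u^*(x)\cdot x-u^*(x))$ is supported on $\{x:\nabla u^*(x)\in\{u<T\}\}\subseteq\bigcup_{y\in\{u\leq T\}}\partial u(y)$, which is a bounded set (the subdifferential of the finite convex function $u$ is bounded over the compact sublevel set $\{u\leq T\}$) and is independent of $k$. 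On this set $\nabla u^*(x)\cdot x-u^*(x)$ ranges over the compact interval $[\min_{\Rn}u,T]$, where the $\zeta_2^k$ are uniformly bounded (they coincide with $\zeta_2$ there once $k$ is large). Dominated convergence together with $\zeta_2^k\to\zeta_2$ pointwise then gives
\[
\lim_{k\to\infty}\int_{\Rn}\zeta_2^k\big(\nabla u^*(x)\cdot x-u^*(x)\big)\d x=\int_{\Rn}\zeta_2\big(\nabla u^*(x)\cdot x-u^*(x)\big)\d x,
\]
which completes the proof. The delicate point throughout is verifying that $g_k\circ u_s$ keeps the structure required by Lemma~\ref{le:special_construction} while all the extra breakpoint values introduced by $g_k$ stay above $T$; this is exactly where the rapid growth built into the definition of $g_k$ in Section~4 is used.
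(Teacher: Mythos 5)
Your proof is correct and is essentially the paper's argument: you use the same chain of identities from Lemma~\ref{le:growth_function_sequences}, together with Lemma~\ref{le:special_construction} and the uniform $T$ from Lemma~\ref{le:sup_t_k}, applied to a radial test function whose excess $\nabla u^*(x)\cdot x-u^*(x)$ equals $s$ on the unit ball and is $\geq T$ outside, to conclude $\zeta_2^k(s)=\lim_m\zeta_2^m(s)$ once $\sumf_k$ is large. The only differences are inessential: the paper takes the cone $u_t(x)=|x|+t$ (so that $u_t^*=\Ind_{B^n}-t$) instead of your staircase $u_s$, and it observes that the stabilization is uniform ($\zeta_2^k\equiv\zeta_2$ for all $k$ with $\sumf_k\geq T+1$), which renders your concluding dominated-convergence step unnecessary.
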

\begin{proof}
Let $T\in\R$ be as in Lemma~\ref{le:sup_t_k} and let $k_0\in\N$ be such that $T+1 \leq \sumf_{k_0}$. Furthermore, for $t\leq T$ let $u_t(x)=|x|+t$ for $x\in\Rn$. Note, that $u_t^*=\Ind_B-t$. Moreover, by the definition of $u_t$ and $g_k$ we can write $g_k(u_t(x))=w_k^t(|x|)$ with a piecewise linear function $w_k^t:[0,\infty)\to\R$ such that $w_k^t(0)=t$, $(w_k^t)'(r)=1$ for every $r>0$ such that $t < w_k^t(r) < \sumf_k$ and $(w_k^t)'(r) \geq k+1$ for a.e. every $r$ such that $w_k^t(r) > \sumf_k > T$ for every $k \geq k_0$. Hence, by Lemma~\ref{le:special_construction} we have for every $k\geq k_0$
$$\nabla(g_k\circ u_t)^*(x)\cdot x - (g_k\circ u_t)^*(x)= t$$
for a.e.\! $x\in\Rn$ with $|x|<1$ and furthermore
$$\nabla(g_k\circ u_t)^*(x)\cdot x - (g_k\circ u_t)^*(x) > T$$
for a.e.\! $x\in\Rn$ with $|x|>1$. Therefore, by Lemma~\ref{le:growth_function_sequences}
\begin{align*}
V_n(B^n) \zeta_2^k(t) &= \int_{B^n} \zeta_2^k\big(\Ind_{B^n}(x) +t\big) \d x\\
&= \int_{\dom u_t^*} \zeta_2^k\big(\nabla u_t^*(x)\cdot x - u_t^*(x)\big) \d x\\
&= \lim_{m\to\infty} \int_{\Rn} \zeta_2^m\big(\nabla(g_k \circ u_t)^*(x)\cdot x - (g_k\circ u_t)^*(x)\big) \d x\\
&= \lim_{m\to\infty} V_n(B^n) \zeta_2^m(t)
\end{align*}
for every $t\leq T$ and every $k\geq k_0$. Since $\zeta_2^k(t)=0$ for every $t>T$ and $k\in\N$, this shows that the sequence $\zeta_2^k$ does not change for $k\geq k_0$. Hence, there exists a function $\zeta_2:\R\to[0,\infty)$ such that
$$\zeta_2(t)=\lim_{m\to\infty} \zeta_2^m(t) = \zeta_2^k(t)$$
for every $k\geq k_0$ and $t\in\R$. In particular, $\zeta_2$ is continuous and $\zeta_2(t)=0$ for every $t\geq T$. Furthermore,
$$\lim_{k\to\infty} \int_{\Rn} \zeta_2^k\big(\nabla u^*(x) \cdot x - u^*(x)\big)\d x = \int_{\Rn} \zeta_2\big(\nabla u^*(x)\cdot x - u^*(x)\big) \d x$$
for every $u\in\CVsf.$
\end{proof}

\subsection{Proof of Theorem~\ref{thm:main_result}}
By Theorem~\ref{thm:class_advances}, equation~\eqref{eq:transl_inv_case} defines a continuous, $\SLn$ and translation invariant valuation on $\CVsf$. 

Conversely, let $\oZ:\CVsf\to[0,\infty)$ be a continuous, $\SLn$ and translation invariant valuation. By Lemma~\ref{le:growth_function_sequences} there exist continuous functions $\zeta_0^k, \zeta_1^k, \zeta_2^k:\R\to[0,\infty)$ such that $\zeta_1^k$ has finite moment of order $n-1$ and $\zeta_2^k(t)=0$ for every $t\geq T_k$ with some $T_k\in\R$ such that
$$\oZ\big(g_k(u)\big)=\zeta_0^k\big(\min\nolimits_{x\in\Rn} u(x)\big) + \int_{\Rn} \zeta_1^k\big(u(x)\big) \d x + \int_{\dom u^*} \zeta_2^k\big(\nabla u^*(x) \cdot x - u^*(x)\big) \d x$$
for every $u\in\CVcf$ and $k\in\N$. By continuity of $\oZ$ and Lemma~\ref{le:g_k_properties}
\begin{align*}
\oZ(u)&=\lim_{k\to\infty} \oZ\big(g_k(u)\big)\\
&=\lim_{k\to\infty}\left( \zeta_0^k\big(\min\nolimits_{x\in\Rn} u(x)\big) + \int_{\Rn} \zeta_1^k\big(u(x)\big) \d x + \int_{\Rn} \zeta_2^k\big(\nabla u^*(x) \cdot x - u^*(x)\big) \d x\right)
\end{align*}
for every $u\in\CVs$. By Lemma~\ref{le:growth_function_sequences} there exists a continuous function $\zeta_0:\R\to[0,\infty)$ such that
$$\lim_{k\to\infty} \zeta_0^k\big(\min\nolimits_{x\in\Rn} u(x)\big) = \zeta_0\big(\min\nolimits_{x\in\Rn} u(x)\big)$$
for every $u\in\CVsf$. By Lemma~\ref{le:lim_zeta_1} there exists a continuous function $\zeta_1:\R\to[0,\infty)$ that has finite moment of order $n-1$ such that
$$\lim_{k\to\infty} \int_{\Rn} \zeta_1^k\big(u(x)\big) \d x = \int_{\Rn} \zeta_1\big(u(x)\big) \d x$$
for every $u\in\CVsf$. Furthermore, by Lemma~\ref{le:lim_zeta_2}
$$\lim_{k\to\infty} \int_{\Rn} \zeta_2^k\big(\nabla u^*(x) \cdot x - u^*(x)\big) \d x = \int_{\Rn} \zeta_2\big(\nabla u^*(x) \cdot x - u^*(x)\big) \d x$$
for some continuous function $\zeta_2:\R\to[0,\infty)$ such that $\zeta_2(t)=0$ for every $t\geq T$ with some $T\in\R$. Hence, $\oZ$ must be as in \eqref{eq:transl_inv_case}.
\hfill\qedsymbol

\subsection{Proof of Theorem~\ref{cor:main_result}}
Lemma~\ref{le:min_is_a_val_polar}, Lemma~\ref{le:int_is_a_val_polar} and Lemma~\ref{le:polar_vol_is_a_val_polar} show that \eqref{eq:dually_transl_inv_case} defines a continuous, $\SLn$ and dually translation invariant valuation on $\CVsf$.

Conversely, let $\oZ:\CVsf\to[0,\infty)$ be a continuous, $\SLn$ and dually translation invariant valuation. By Lemma~\ref{le:conjugate_transl_sln_inv}, Lemma~\ref{le:conjugate_is_a_val}, Lemma~\ref{le:conjugate_coercive} and Theorem~\ref{thm:wijsman} the map $u\mapsto \oZ^*(u):=\oZ(u^*)$ defines a continuous, $\SLn$ and translation invariant valuation on $\CVsf$. Hence, by Theorem~\ref{thm:main_result} and Lemma~\ref{le:u_starstar_is_u}
\begin{align*}
\oZ(u)&=\oZ\big((u^*)^*\big)\\
&= \oZ^*(u^*)\\
&= \widetilde{\zeta_0}\big(\min\nolimits_{x\in\Rn} u^*(x)\big) + \int_{\Rn} \zeta_1\big(u^*(x)\big) \d x + \int_{\Rn} \zeta_2\big(\nabla (u^*)^*(x) \cdot x - (u^*)^*(x)\big) \d x\\
&= \widetilde{\zeta_0}(-u(0)\big) + \int_{\Rn} \zeta_1\big(u^*(x)\big) \d x + \int_{\Rn} \zeta_2\big(\nabla u(x) \cdot x - u(x)\big)\d x
\end{align*}
for every $u\in\CVsf$, where $\widetilde{\zeta_0},\zeta_1,\zeta_2:\R\to[0,\infty)$ are continuous functions such that $\zeta_1$ has finite moment of order $n-1$ and $\zeta_2(t)=0$ for every $t\geq T$ with some $T\in\R$. The statement now follows by setting $\zeta_0(t)=\widetilde{\zeta_0}(-t)$ for $t\in\R$.
\hfill\qedsymbol

\section*{Appendix}
We will give the construction of the function $v_l^t$ from Lemma~\ref{le:g_k_inv_on_v_l_t} and discuss its properties.

By definition of the function $g_k$, $k\in\N$ we can write its inverse function $g_k^{-1}$ as
\begin{equation}
\label{eq:g_k_inv}
g_k^{-1}(s) = \begin{cases}
s,\quad & s \leq \sumf_k\\
\sumf_{k+j-1} + k!+\frac{s-\sumf_{k+j}}{k+j+1},\quad & \sumf_{k+j} < s \leq \sumf_{k+j+1},\; j\in\N_0.
\end{cases}
\end{equation}
Next, for $t\in\R$ let $m_t = \min \{m\in\N \, : \, t\leq \sumf_m \}$ and let
$$A_{l,m}^t = 1+ \frac{\sumf_{m_t} - t + (m-m_t)}{l}$$
for $m\geq m_t$. Note, that $A_{l,m}^t -A_{l,m-1}^t = \frac 1l$ and therefore $\lim_{m\to\infty} A_{l,m}^t=+\infty$. For $l\in\N$ we define the piecewise linear function $v_l^t:[0,\infty)\to \R$ as
\begin{equation}
\label{eq:v_l_t}
v_l^t(r)=
\begin{cases}
t,\quad & 0 \leq r \leq 1\\
t+ l(r-1),\quad & 1 < r \leq A_{l,m_t}^t\\
\sumf_m + (m+1)! l\left(r- A_{l,m}^t\right), \quad & A_{l,m}^t < r \leq A_{l,m+1}^t,\; m \geq m_t.
\end{cases}
\end{equation}

Note, that by this definition
\begin{align}
\begin{split}
\label{eq:v_l_t_at_a_l_m_t}
v_l^t\left(A_{l,m}^t\right) &= \sumf_{m-1} +  m!l  \left(A_{l,m}^t - A_{l,m-1}^t\right)\\
&= \sumf_m\\
&= \lim_{r\to (A_{l,m}^t)^-} v_l^t(r)
\end{split}
\end{align}
for every $m\geq m_t$, $l\in\N$ and $t\in\R$. Hence, $v_l^t$ is continuous. Moreover, it follows immediately that $v_l^t$ is convex, increasing, super-coercive and finite on $[0,\infty)$ and in particular $x\mapsto v_l^t(|x|)\in\CVsf$. Furthermore, by Lemma~\ref{le:hd_conv_lvl_sets} it is easy to see that $\elim_{l\to\infty} v_l^t(|\cdot|) = \Ind_{B^n}+t$.

Next, we will consider the composition $g_k^{-1}\circ v_l^t$. Therefore, fix $t\in\R$ and $k\in\N$. If $t \leq \sumf_k$ we have by \eqref{eq:v_l_t} and \eqref{eq:v_l_t_at_a_l_m_t} that $v_l^t(r) \leq \sumf_k$ for every $r \leq A_{l,k}^t$. Hence, by \eqref{eq:g_k_inv}
$$
g_k^{-1}(v_l^t(r)) = \begin{cases} t,\quad & 0\leq r \leq 1\\
t+l(r-1),\quad & 1 < r \leq A_{l,m_t}^t\\
\sumf_m+(m+1)!l\left(r-A_{l,m}^t\right), \quad & A_{l,m}^t < r\leq A_{l,m+1}^t,\;m_t \leq m < k.
\end{cases}
$$
Furthermore, if $A_{l,m}^t < r\leq A_{l,m+1}^t,\; k \leq m$ we have by \eqref{eq:v_l_t_at_a_l_m_t} that $\sumf_m < v_l^t(r) \leq \sumf_{m+1}$. Therefore, using \eqref{eq:g_k_inv} with $k+j=m$ gives
\begin{align*}
g_k^{-1}(v_l^t(r)) & = g_k^{-1}\left(\sumf_m + (m+1)! l \big(r-A_{l,m}^t\big)\right)\\
&= \sumf_{m-1} + k! + \frac{\sumf_m + (m+1)! l \big(r-A_{l,m}^t\big) - \sumf_m}{m+1}\\
&= \sumf_{m-1} + k!+ m! l \left(r-A_{l,m}^t\right)
\end{align*}
for $A_{l,m}^t < r\leq A_{l,m+1}^t$ and  $k \leq m$. In particular
\begin{align*}
g_k^{-1} (v_l^t(A_{l,k}^t)) &= \sumf_{k-1} + k! l \left(A_{l,k}^t-A_{l,k-1}^t\right)\\
&= \sumf_k\\
&= \lim_{r\to (A_{l,k}^t)^-} g_k^{-1}(v_l^t(r))
\end{align*}
which shows that $g_k^{-1}\circ v_l^t$ is continuous. Furthermore,
$$\frac{\d}{\d r}\, g_k^{-1}(v_l^t(r))= k! l$$
for $A_{l,k-1}^t < r < A_{l,k+1}^t$. In particular, the slope of $g_k^{-1}\circ v_l^t$ is increasing, despite the fact that the slope of $g_k^{-1}$ is decreasing. Hence, it is easy to see, that $g_k^{-1}\circ v_l^t$ is a convex, increasing, super-coercive and finite function on $[0,\infty)$. Moreover, if follows from Lemma~\ref{le:hd_conv_lvl_sets} that $\elim_{l\to\infty} g_k^{-1}(v_l^t(|\cdot|)) = \Ind_{B^n}+t = \Ind_{B^n}+g_k^{-1}(t)$.

In the case $t>\sumf_k$, there exists $j_t\in\N_0$ such that $\sumf_{k+j_t} < t \leq \sumf_{k+j_t+1}$ and therefore, similarly to the case above,
$$
g_k^{-1}(v_l^t(r)) = \begin{cases}
\sumf_{k+j_t-1} + k!+\frac{t-\sumf_{k+j_t}}{k+j_t+1},\quad & 0 \leq r \leq 1\\
\sumf_{k+j_t-1} + k!+\frac{t-\sumf_{k+j_t}+l(r-1)}{k+j_t+1},\quad & 1 < r \leq A_{l,m_t}^t\\
\sumf_{m-1}+ k!+m! l \left(r-A_{l,m}^t\right), \quad & A_{l,m}^t < r \leq A_{l,m+1}^t,\;m_t \leq m.
\end{cases}
$$
Again, this is a convex, increasing, super-coercive and finite function on $[0,\infty)$ with\linebreak$\elim_{l\to\infty} g_k^{-1}(v_l^t(|\cdot|)) = \Ind_{B^n}+g_k^{-1}(t)$.

\phantomsection
\addcontentsline{toc}{section}{Acknowledgments}
\section*{Acknowledgments}

The author would like to thank Andrea Colesanti for pointing out the idea of Lemma~\ref{le:no_moment_no_finite}.

\footnotesize

\phantomsection
\addcontentsline{toc}{section}{References}

\bigskip\bigskip
\parindent 0pt\footnotesize

\parbox[t]{8.5cm}{
Fabian Mussnig\\
Institut f\"ur Diskrete Mathematik und Geometrie\\
Technische Universit\"at Wien\\
Wiedner Hauptstra\ss e 8-10/1046\\
1040 Wien, Austria\\
e-mail: fabian.mussnig@tuwien.ac.at}

\end{document}